\definecolor{color1}{rgb}{0.8,1,0.8}
\newtheorem{thm}{Theorem} 
\newtheorem{coro}{Corollary} 
\newtheorem{lemma}{Lemma} 
\newtheorem{propo}{Proposition} 
\newcommand{\R}{\mathbb{R}} \newcommand{\Z}{\mathbb{Z}}  \newcommand{\N}{\mathbb{N}} 
\newcommand{\HH}{\mathbb{H}} \newcommand{\C}{\mathbb{C}}
\def\B#1#2{{#1\choose #2}}
\def\ssection#1{ \begin{center} \fcolorbox{color1}{color1}{ \parbox{17.2cm}{\bf{ \Large{ #1 }}}} \end{center} }
\let\paragraph\subsection
\definecolor{bbyellow}{rgb}{1.0,1.0,0.8}
\def\tttheorem#1{ \begin{center} \fcolorbox{bbyellow}{bbyellow}{ \parbox{17.2cm}{\bf{ \Large{ #1 }}}} \end{center} }
\title{The Cohomology for Wu Characteristics}
\author{Oliver Knill}
\date{March 14, 2018}
\address{Department of Mathematics \\ Harvard University \\ Cambridge, MA, 02138, USA }
\subjclass{57M15, 05C40, 68Rxx, 55-XX,94C99}
\keywords{Generalized simplicial cohomology,Wu characteristic, Lefschetz fixed point formula}
\begin{document}
\maketitle

\begin{abstract}
While the Euler characteristic $\chi(G)=\omega_1(G) = \sum_x \omega(x)$ super counts simplices, the 
Wu characteristics $\omega_k(G) = \sum_{x_1 \sim x_2 \dots \sim x_k} \omega(x_1) \cdots \omega(x_k)$ 
super counts simultaneously pairwise interacting $k$-tuples of simplices in a finite abstract
simplicial complex $G$. More generally, one can define the $k$-intersection number 
$\omega_k(G_1, \dots G_k)$, which is the same sum but where $x_i \in G_i$.
For every $k \geq 1$ we define a cohomology $H^p_k(G_1,\dots,G_k)$ compatible 
with $\omega_k$. It is invariant under Barycentric subdivison. 
This interaction cohomology allows to distinguish spaces which simplicial 
cohomology can not: for $k=2$, it can identify algebraically the M\"obius 
strip and the cylinder. The vector spaces $H^p_k(G)$ are defined by explicit exterior 
derivatives $d_k$ which generalize the incidence matrices for simplicial cohomology. 
The cohomology satisfies the Kuenneth formula: for every $k$, 
the Poincar\'e polynomials $p_k(t)$ are ring homomorphisms from the strong ring to the 
ring of polynomials in $t$.
The case $k=1$ for Euler characteristic is the familiar simplicial cohomology $H^p_1(G)=H^p(G)$. 
On any interaction level $k$, there is now a Dirac operator $D=d_k+d_k^*$. 
The block diagonal Laplacian $L=D^2$ leads to the generalized Hodge 
correspondence $b_p(G)={\rm dim}(H^p_k(G)) = {\rm dim}({\rm ker}(L_p))$ and Euler-Poincar\'e
$\omega_k(G) = \sum_p (-1)^p {\rm dim}(H^p_k(G))$ for Wu characteristic and more generally
$\omega_k(G_1, \dots G_k) = \sum_p (-1)^p {\rm dim}(H^p_k(G_1,\dots, G_k))$.
Also, like for traditional simplicial cohomology, an isospectral Lax deformation 
$\dot{D} = [B(D),D]$, with $B(t)=d(t)-d^*(t)-ib(t)$, $D(t)=d(t)+d(t)^* + b(t)$
can deform the exterior derivative $d$ which belongs to the interaction cohomology.
Also the Brouwer-Lefschetz fixed point theorem 
generalizes to all Wu characteristics: given an endomorphism $T$ of $G$, 
the super trace of its induced map on k'th cohomology defines a Lefschetz number $L_k(T)$. 
The Brouwer index $i_{T,k}(x_1, \dots, x_k) = \prod_{j=1}^k \omega(x_j) {\rm sign}(T|x_j)$ 
attached to simplex tuple which is invariant under $T$ leads to the formula
$L_k(T) = \sum_{T(x)=x} i_{T,k}(x)$. For $T=Id$, the Lefschetz number $L_k(Id)$
is equal to the $k$'th Wu characteristic $\omega_k(G)$ of the graph $G$
and the Lefschetz formula reduces to the Euler-Poincar\'e formula for Wu characteristic. 
Also this generalizes to the case, where automorphisms $T_k$ act on $G_k$:
there is a Lefschetz number $L_k(T_1,\dots, T_k)$ and indices 
$\prod_{j=1}^k \omega(x_j) {\rm sign}(T_j|x_j)$. 
For $k=1$, it is the known Lefschetz formula for Euler characteristic. While Gauss-Bonnet
for $\omega_k$ can be seen as a particular case of a discrete Atiyah-Singer result, the Lefschetz formula
is an particular case of a discrete Atiyah-Bott result. But unlike for Euler characteristic $k=1$, 
the elliptic differential complexes for $k>1$ are not yet associated to any constructions 
in the continuum.
\end{abstract}

\section{Introduction}

\paragraph{}
An abstract finite simplicial complex $G$ is a finite set of non-empty subsets that is closed
under the process of taking non-empty subsets. It defines a new complex $G_1$,
where $G_1$ is the set of subsets of the power set $2^G$ which are pairwise contained in each other.
This is called the Barycentric refinement of $G$. 
Quantities which do not change under Barycentric refinements are {\bf combinatorial invariants} \cite{Bott52}.
Examples of such quantities are the {\bf Euler characteristic}, the {\bf Wu characteristics} \cite{Wu1959,valuation} 
or {\bf Green function values} \cite{Spheregeometry}. An important class of
complexes are Whitney complexes of a finite simple graph in which $G$
consists of all non-empty subsets of the vertex set, in which all elements
are connected to each other. A Barycentric
refinement $G_1$ of a complex $G$ is always a Whitney complex. 
The graph has the sets of $G$ as vertices and connects two if one 
is contained in the other. 

\paragraph{}
While Euler characteristic $\chi(G)=\sum_x \omega(x)$ super counts the self interactions of 
simplices within the geometry, the Wu characteristic $\omega_k$ super counts 
the intersections of $k$ different simplices 
$$   \omega_k(G) = \sum_{x_1 \sim \dots \sim x_k} \omega(x_1) \cdots \omega(x_k) \; , $$ 
where $x_1 \sim \dots \sim x_k$ means that 
$(x_1,x_2, \cdots x_k) \in G^k = G \times G \times \cdots \times G$ all pairwise intersect.
While originally introduced for polytopes by Wu in 1959, we use it for simplicial complexes 
and in particular for Whitney complexes of graphs. Especially important is the quadratic
case 
$$ \sum_{x \sim y} \omega(x) \omega(y) $$
which produces an Ising type nearest neighborhood functional if we think of even 
or odd dimensional simplices as spins of a two-state statistical mechanics system. 
We call the cohomology for Wu characteristic an {\bf interaction 
cohomology} because the name "intersection cohomology" is usedin the continuum for
a cohomology theory by Goresky-MacPherson.

\paragraph{}
As in \cite{valuation}, where we proved Gauss-Bonnet, Poincar\'e-Hopf, 
and product formulas for Wu characteristics in the case of Whitney complexes, we
can work in the slightly more general frame work of finite abstract simplicial complexes.
If we look at set of subsets of $G \times G \cdots \times G$ of a simplicial complex $G$ 
such that all $k$ elements pairwise intersect, we obtain a $k$-intersection complex.
For $k=1$ the complex is $G$ itself, the connection-oblivious complex.
For $k=2$ the intersection relations give the interaction graph $G'$ which was studied in 
\cite{Unimodularity,ListeningCohomology,StrongRing,DehnSommerville,Helmholtz,Spheregeometry}. 
It contains the Barycentric refinement graph $G_1$. 
For every $k>1$ we get an example of an ``ordinary cohomology" generalizing the
simplicial cohomology. 

\paragraph{}
A major reason to study the Wu characteristic, is that 
each $\omega_k$ is multiplicative with respect to the graph product \cite{KnillKuenneth}
$G,H \to (G \times H)_1$, in which the pairs $(x,y)$ are vertices with $x \in G$
and $y \in H$ and where two vertices are connected if one is contained in the other. 
It is already multiplicative with respect to the Cartesian product $G \times H$, which
unlike $(G \times H)_1$ is not a simplicial complex any more. 
The multiplicative property makes the $\omega_k$ functionals a natural counting tool, 
en par with Euler characteristic $\chi(G)=\omega_1(G)$, which is the special case $k=1$. The 
corresponding cohomologies belong to a generalized calculus which does
not yet seem to exist in the continuum. The multiplicative property of the functionals
is especially natural when looking at the Cartesian product $G \times H$ in the strong ring
generated by simplicial complexes in which the simplicial complexes are the multiplicative
primes and where the objects are signed to have an additive group. 
The subring of zero-dimensional complexes can be identified with the ring of integers. 

\paragraph{}
Traditionally, graphs are seen as zero-dimensional simplicial complexes. 
A more generous point of view is to see a graph $G=(V,E)$ as a structure on which 
simplicial complexes can live, similarly as topologies, sheaves or $\sigma$-algebras 
are placed on sets. The $1$-dimensional complex $V \cup E$ is only one of the possible
simplicial complexes. An other example 
is the graphic matroid defined by trees in the graph. 
But the most important complex imposed on a graph is the {\bf Whitney complex}.
It is defined by the set of complete subgraphs of $G$.
Without specifying further, we always associate a graph equipped with its Whitney complex.

\paragraph{}
The class of all finite simple graphs $G=(V,E)$ is a distributive Boolean 
lattice with union $(V_1 \cup V_2,E_1 \cup E_2)$
and intersection $(V_1 \cap V_2,E_1 \cap E_2)$. The same is true for finite
abstract simplicial complexes, where also the union and intersection remain
both simplicial complexes. But unlike the Boolean lattice of sets,
the Boolean lattices of graphs or complexes are each not a ring: 
the set theoretical addition $G \Delta H = G \cup H \setminus G \cap H$ is in general only in  
the larger category of chains. But if we take the disjoint union of graphs as "addition" 
and the Cartesian multiplication of sets as multiplication, we get a ring \cite{StrongRing}. 

\paragraph{}
All of the Wu characteristics $\omega_k$ are functions 
on this ring. They are of fundamental importance because they are not only additive 
$\omega_k(A \dot{\cup} B) = \omega_k(A) + \omega_k(B)$ but also multiplicative
$\omega_k(A \times B) = \omega_k(A) \omega_k(B)$. In other words, 
all the Wu characteristic numbers $\omega_k$ and especially the Euler characteristics 
$\chi=\omega_1$ are {\bf ring homomorphisms} from the strong ring to the integers. 
This algebraic property already makes the Wu characteristics important quantities 
to study. 

\paragraph{}
Euler characteristic is distinguished in having additionally the property of being 
invariant under homotopy transformations. It is unique among linear valuations 
with this property. The lack of homotopy invariance for the Wu characteristic numbers 
for $k \geq 2$ is not a handicap, in contrary: it allows for finer invariants: 
while for all {\bf trees} for example, the Euler characteristic is $1$, the Wu characteristic 
$\omega(G)=\omega_2(G)$ is more interesting: it is $\omega(G)=5-4n+M(G)$, 
where $n=\sum_{v \in V} 1$ is the vertex cardinality and $M$ is the 
{\bf Zagreb index} $M(G)=\sum_v {\rm deg}(v)^2$. 
This {\bf tree formula} relating the  Wu characteristic with the Zagreb index has been pointed 
out to us by Tamas Reti. 

\paragraph{}
There are other natural operations on simplicial complexes. 
One is the {\bf Zykov addition} which is in the continuum known as the {\bf join}. 
Take the disjoint union $G,H$ of the graphs
and additionally connect any pair of vertices from different graphs. The generating
function $f_G(x)=1+\sum_{k=0} v_k x^{k+1}$ satisfies $f_{G+H}(x)=f_G(x) f_H(x)$ and
$\chi(G)=1-f_G(-1)$ from which one can deduce that $i(G)=1-\chi(G)$ is multiplicative:
$i(G+H)=i(G) i(H)$. Evaluating $f$ at $x=1$ instead shows that the total number of
simplices $f_G(x)-1 = \sum_k v_k$ is multiplicative. We don't use the join much here. 

\paragraph{}
Looking at finite abstract simplicial complexes modulo Barycentric refinements already captures
some kind topology. But Barycentric refinements are still rather rigid and do not include
all notions of topology in which one wants to have a sense of rubber geometry. One especially
wants to have dimension preserved.  We introduced 
therefore a notion of homeomorphism in \cite{KnillTopology}. It uses
both homotopy and dimension. The Barycentric refinement of a graph $G$ is homeomorphic
to $G$ in that notion. This notion of homeomorphism could be ported over to simplicial complexes
because the Barycentric refinement of an abstract simplicial complex is a Whitney complex of a
graph. From a topological perspective there is no loss of generality by looking at graphs rather 
than simplicial complexes. We believe that the interaction cohomology considered here is the 
same for homeomorphic graphs but this is not yet proven.

\paragraph{}
As topological spaces come in various forms, notably manifolds or varieties, one can also define classes
of simplicial complexes or classes of graphs. 
In the continuum, one has historically started to investigate first very regular structures
like Euclidean spaces, surfaces, then introduced manifolds, varieties, schemes etc.
In the discrete, simplicial complexes can be seen as general structure and subclasses play the role of manifolds,
or varieties. In this respect, Euler characteristic has a differential topological flair,
while higher Wu characteristics are more differential topological because second order difference operations 
appear in the curvatures. 

\paragraph{}
Complexes for which all unit spheres are $(d-1)$ spheres are discrete $d$-manifolds. The definitions
are inductive: the empty complex $0$ is the $(-1)$ sphere, a $d$-complex is a complex for which every unit sphere $S(v)$
is a $(d-1)$-sphere. A $d$-sphere is a $d$-graph which after removing one vertex becomes
contractible. A graph $G=(V,E)$ is contractible, if there is a vertex $v$ such that
both $S(v)$ and the graph generated by $V \setminus \{v\}$ is contractible. We have seen that
for $d$-graphs, all Wu characteristics are the same. Wu characteristic is interesting for 
discrete varieties like the figure $8$ graph. Discrete varieties of dimension $d$ are complexes for which 
every unit sphere is a $(d-1)$-complex. There can now be singularities, as the unit spheres are not spheres
any more. In the figure 8 graph for example, the unit sphere of the origin is $4$, the $0$-dimensional graph 
with 4 points. (When writing $4$, we identify zero dimensional complexes with $\N$.)

\section{Calculus} 

\paragraph{}
Attaching an {\bf orientation} to a simplicial complex $G$ means choosing a base permutation on 
each element of $G$. The choice of orientation is mostly irrelevant and corresponds to a choice of basis. 
Calculus on complexes deals with functions $F$ on $G$ which change sign
if the orientation of a simplex is changed.  Functions on the set of $k$-dimensional simplices are
the {\bf $k$-forms}. It produces {\bf simplicial cohomology} with the exterior derivative 
defined as $dF(x) = F(\delta x)$, where $\delta x$ is the ordered boundary complex 
of a simplex (which is not a simplicial complex in general). While
$G=\delta K_3 = \{ \{1,2\},\{2,3\},\{3,1\},\{1\},\{2\},\{3\} \}$ is a 1-sphere
(because $G_1=C_6$ being a circular graph with 6 vertices), the boundary of a star
graph is not even a simplicial complex any more. 
Summing a $k$-forms over a collection $A$ of $k$-simplices gives {\bf integration} 
$\int_A F \; dx$.

\paragraph{}
Differentiation and integration are related by {\bf Stokes theorem} $\int_G dF \; dx = \int_{\delta G}$
which follows by extending integration linearly from 
simplices to simplicial complexes and chain. The boundary of a simplicial complex is not a simplicial complex any 
more (for the star complex $G=\{ \{1,2\},\{1,3\},\{1,4\},\{1\},\{2\},\{3\},\{4\} \}$ for example, 
$\delta G=\{ -3 \{1\}, \{2\}, \{3\}, \{4\} \}$ is only a chain). It is only for orientable discrete
manifolds with boundary that we can achieve $\delta G$ to be a simplicial complex and actually a 
closed discrete manifold without boundary, leading to the classical theorems in calculus. 

\paragraph{}
In calculus we are used to split up the exterior derivatives $d_p: \Lambda^p \to \Lambda^{p+1}$. 
It is more convenient to just stick them together to one single derivative $d: \Lambda \to \Lambda$. 
For a simplicial complex $G$ with $n$ elements, 
the exterior derivative matrix $d$ on the union of all forms together is given by a $n \times n$ matrix.
The corresponding {\bf Dirac operator} $D=d+d^*$ defines the {\bf Hodge Laplacian} $L=D^2$ 
which is block diagonal, each block $L_k$ acting as a symmetric Laplacian on $k$-forms. 

\paragraph{}
The operator $D$ not only simplifies the notation; it is also useful: 
the solution to the wave equation $u_{tt} = L u$ on the graph for example is given by an explicit 
{\bf d'Alembert solution formula} $u(t) = \cos(D t) u(0) +  \sin(Dt) D^{-1} u'(0)$, where $D^{-1}$ is the 
pseudo inverse of $D$. Classically, in the case 
$D=-i \partial_x$, this is the {\bf Taylor formula} $u(t) = e^{i D t} u(0) = \exp(\partial_x t) u$. In the case
of a line graph, this is the {\bf Newton-Gregory formula} (= discrete Taylor formula) 
$f(x+t)=\sum_k f^{k}(x) t^k/k!$, where $t^k$ are
deformed polynomials. On a general graph, it is a Feynman-Kac path integral, which is in the discrete a 
finite weighted sum over all possible paths of length $t$. (See \cite{ArchimedesFunctions} for an exhibit). 

\paragraph{}
A general solution of the {\bf Schr\"odinger evolution} $e^{D t} \psi(0)$ with complex valued
initial condition $\psi(0)=u(0)+D^{-1} u'(0)$ is identical to the real wave evolution which so
because in an abstract way Lorentz invariant in space-time (where time is continuous). 
Also the heat evolution $e^{-t L} u(0)$ is of practical use, as it produces
a numerical method to find harmonic forms representing cohomology classes. The reason is
that under the evolution, all the eigenmodes of the positive eigenvalues get damped exponentially fast,
so that the attractor on each space of $k$-forms is the space of harmonic $k$-vectors.

\paragraph{}
The first block of $L$, the matrix $L_0$ acting on scalar functions is known under the name
{\bf Kirchhoff matrix}. The eigenspace $E_k(0)$ of the eigenvalue $0$ of
$L_k$ defines the linear space of harmonic forms whose dimension agrees with the dimension of 
the cohomology group $H^k(G)$. By McKean-Singer, for any $t$, the super trace of $e^{-t L}$ is the 
Euler characteristic $\chi(G)$ which is $\chi_1 \cdot v(G)$, where $\chi_1=(1,-1,1,\dots)$
and the {\bf $f$-vector} $v(G)=(v_0(G),v_1(G),\dots )$ which encodes the number $v_k(G)$ 
of complete subgraphs of dimension $k$. 
In general, $b_k(G) = {\rm dim}(H^k(G))$ is equal to the number of zero eigenvalues of 
$L_k$. This is part of discrete Hodge theory. We also know that the collection of the nonzero
eigenvalues of all Fermionic Laplacians $L_{2k+1}$ is equal to the collection of the nonzero
eigenvalues of all Bosonic Laplacians $L_{2k}$. This {\bf super symmetry} leads to 
the {\bf McKean-Singer formula}
$$  \chi(G) = {\rm str} \exp(-t L) \; . $$ 
For $t=0$, this is the combinatorial definition $\chi(G)={\rm str}(1)$ of the Euler characteristic. For 
$t \to \infty$, it gives the super sum of the Betti numbers. 
If $G=(V,E)$ is a finite simple graph, let $v = (v_0, \dots,v_d)$ denote the 
{\bf $f$-vector} of $G$, where $v_i$ is the number of $i$-dimensional subgraphs of $G$. 

\paragraph{}
If $v=(v_0,v_1, \dots, v_r)$ is the $f$-vector of $G$, then 
$n=v_0+\cdots + v_r=v(G) \cdot (1, \dots, 1)$ simplicies in $G$. 
The number $\chi(G)=v(G) \cdot (1,-1,\dots, \pm 1)$ 
is the {\bf Euler characteristic} of $G$. We know $\chi(G \times H) = \chi(G) \chi(H)$
and also $\chi(G)=\chi(G_1)$, where $G_1$ is the Barycentric refinement.
The graph $G_1$ is a subgraph of the {\bf connection graph}, where two simplices 
are connected if they intersect. Let $V(G)$ be the {\bf $f$-matrix} of $G$ counting in the entry 
$V_{ij}$ how many $i$-simplices intersect with $j$-simplices. 
More generally, we have a symmetric $V_{i_1,\dots,i_k}$ which 
tells, how many $i_1$ simplices, $i_2$ simplices to $i_k$ simplices intersect. 
For any $k$, the Euler characteristic satisfies the {\bf Euler-Poincar\'e formula}
$\chi_k(G) = b_0(G)-b_1(G)+b_2(G) \cdots$, 
where $b_j(G) = {\rm dim}(H^j(G))$ is the $k$'th {\bf Betti number} of $G$ \cite{Eckmann1}. 
We will below generalize this Euler-Poincar\'e formula from 
$\chi(G) = \omega_1(G)$ to all the Wu characteristics $\omega_k(G)$. 

\section{Interaction cohomology}

\paragraph{}
In this section we define a cohomology compatible with the Wu characteristic 
$$  \omega(G) = \sum_{x \sim y} \omega(x) \omega(y)  \; , $$ 
a sum taken over all intersecting simplices in $G$, where $\omega(x) = (-1)^{{\rm dim}(x)}$. 
Wu characteristic is a quadratic version
of the Euler characteristic $\chi(G) = \sum_x \omega(x)$. Using $\chi_1=(1,-1,1,\dots)$ and the
$f$-vector $v(G)$ and $f$-matrix $V(G)$ we can write $\chi(G) = \chi_1 \cdot v(G)$ and 
$\omega(G) = \chi_1 \cdot (V(G) \cdot \chi_1)$.
Cubic and higher order Wu characteristics are defined in a similar way. 
All these characteristic numbers are multi-linear valuations
in the sense of \cite{valuation}. One can also generalize the setup and define
$$ \omega(G,H) = \sum_{x \in G, y \in H, x \sim y} \omega(x) \omega(y) \; . $$
It is a {\bf Wu intersection number}. 
This can be especially interesting if $H$ is a geometric object embedded in $G$. 

\paragraph{}
{\bf Example:} Let $G$ be the complete graph
$G=K_n$. By subtracting the number of all non-intersecting pairs of simplices
from all pairs of simplices, we get the $f$-matrix 
$V_{ij} = \B{n}{i} \B{n}{j}-\frac{n!}{i! j! (n-i-j)!}$
which gives $\omega(K_{d+1})= \sum_{i,j} V_{ij} (-1)^i (-1)^j =  (-1)^{d}$.
If $f_G$ is the Stanley-Reisner polynomial of $G$, then 
$\omega(G)=f_G^2(-1,\dots,-1) - f^2_G(-1,\dots,-1)$. We can therefore write 
$$  \chi(G) = \sum_x \omega(x), \omega(G) = \sum_{x \sim y} \omega(x) \omega(y)  \; . $$
The higher order versions $\omega_k(G) = \sum_{x_1 \sim \cdots \sim x_k} \prod_{j=1}^k \omega_j$
are computed in the same way. We have $\omega_k(K_n)=\chi(K_n)$ for odd $n$ and $\omega_k(K_n)_=1$
for even $n$. 

\paragraph{}
When looking at valuations on {\bf oriented simplices}, we require $F(A)=-F(\overline{A})$,
where $\overline{A}$ denotes the simplex with opposite orientation. We think of these 
valuations as {\bf discrete differential forms}. Multi-linear versions of such valuations 
play the role of quadratic or higher order differential forms. Given a $k$-linear form $F$, 
it leads to the {\bf exterior derivative} 
$$ dF(x_1, \dots, x_k) = \sum_j (-1)^{\sum_{i<j} {\rm dim}(x_i)} F(A_1, \dots, dA_j, \dots,A_k) \; , $$ 
where $dA$ is the boundary chain of $G$. In the case $k=1$, this is the usual exterior derivative
$$ dF(x) = F(dx)  \; . $$
For $k=2$, it is 
$$ dF(x,y) = F(dx,y)+(-1)^{{\rm dim}(x)} F(x,dy) \; . $$
Because $ddA=0$, the operation $d$ can serve as an exterior derivative: we get a {\bf cohomology}
$$ H^p(G) = {\rm ker}(d_p)/{\rm im}(d_{p-1}) \; . $$
We call it the {\bf intersection cohomology} or the {\bf quadratic intersection cohomology} or 
simply the {\bf quadratic simplicial cohomology}.  It deals with 
interacting $k$-tuples of simplices. All these cohomologies $H^p_k(G_1, \dots, G_k)$ generalize 
simplicial cohomology, which is the case $k=1$. 

\paragraph{}
Define $b_{k,p}(G)$ to be the dimension of the $p$'th cohomology group $H^p_k(G)$. 
It is the $p$'th {\bf Betti number on the level $k$}. More generally, if 
$G_1, \dots, G_k$ are simplicial complexes, define $b_p(G_1,\dots, G_k)$ to be the 
dimension of the cohomology group $H^p(G_1,\dots, G_k)$. 

\paragraph{}
Functions on simplex $k$-tuples of pairwise intersecting simplices satisfying 
$$ \sum_{i=1}^k {\rm dim}(x_i) = p $$
are called {\bf $p$-forms} $f$ in the $k$-interaction cohomology. As for simplicial 
cohomology, if $df=0$, we have {\bf cocycles} and if $f=dg$ we have 
{\bf coboundaries}.  \\

{\bf Examples.} \\
{\bf 1)} If $G$ is the octahedron graph, then, for $k=1$ there are only $0$-forms, 
$1$-forms and $2$-forms. The Betti vector for simpicial cohomology is 
$b(G)=(1,0,1)$ which super sums to $\chi(G)=2$. 
For $k=2$, there are only non-trivial cohomology classes for 
$2$-forms, $3$-forms as well as $4$-forms. 
The Betti vector for quadratic interaction 
cohomology is $b_2(G) = (0,0,1,0,1)$. The same Betti vectors appear for the 
icosahedron or Barycentric refinements of these 2-spheres. For the Barycentric refinement
of the icosahedron, the Dirac operator is already a $9602 \times 9602$ matrix. \\

{\bf 2)} The following example shows how can use cohomology to distinguish topologically 
not-equivalent complexes, which have equivalent simplicial cohomology.
The pair has been found by computer search. 
Let $G$ be the Whitney complex of of the graph with edges 
$E=\{$ (1, 2), (1, 3), (1, 6), (1, 7), (1, 8), (1, 9), (2, 3), (2, 6), (2, 8), (2, 9),
(3, 7), (3, 9), (4, 5), (4, 6), (4, 8), (4, 9), (5, 6), (7, 8)
$\}$. Let $H$ be the Whitney complex with edges $E=\{$ 
(1, 2), (1, 3), (1, 4), (1, 5), (2, 4), (2, 5), (2, 6), (2, 8), (2, 9), (3, 4),
(3, 5), (3, 9), (4, 6), (4, 8), (6, 8), (6, 9), (7, 8), (7, 9)
$\}$, the f-vector is $(9, 18, 9, 1)$ and the f-matrix is
$[[9, 36, 27, 4], [36, 136, 96, 13], [27, 96, 65, 8], [4, 13, 8, 1]]$.
The Betti vector is the same $(1, 2, 0, 0)$ so
linear simplicial cohomology of $G$ and $H$ are the same. 
The quadratic Wu cohomology of $G$ and $H$ however differ: it is
$b_2(G) = (0, 0, 6, 3, 0, 0, 0)$ and $b_2(H) = (0, 0, 4, 1, 0, 0, 0)$ for $H$. \\

{\bf 3)} This is an interesting prototype computation which we have written down in 2016
(see the appendix). For the {\bf cylinder} $G$ given by the Whitney complex of the graph 
with edge set $E\{$ (1, 2), (2, 3), (3, 4), (4, 1), (5, 6), (6, 7), (7, 8), (8, 5),
(1, 5), (5, 2), (2, 6), (6, 3), (3, 7), (7, 4), (4, 8), (8, 1)
$\}$ and the {\bf M\"obius strip} given as the Whitney complex of the graph with 
edge set $E=\{$
(1, 2), (2, 3), (3, 4), (4, 5), (5, 6), (6, 7), (7, 1), (1, 4),
(2, 5), (3, 6), (4, 7), (5, 1), (6, 2), (7, 3) $\}$
we have identical simplicial cohomology but different quadratic cohomology. The cylinder has
the Betti vector $(0, 0, 1, 1, 0)$, the M\"obius strip has the trivial quadratic Betti vector
$(0,0,0,0,0)$. In the appendix computation, all matrices are explicitely written down. 
An other appendix gives the code. 

\paragraph{}
In the same way as in the case $k=1$, we have a cohomological 
reformulation of Wu-characteristic. This Euler-Poincar\'e formula
holds whenever we have a cohomology given by a concrete exterior
derivative \cite{Eckmann1,Edelsbrunner,brouwergraph}:

\begin{thm}[Euler-Poincar\'e]
For any $k \geq 1$, we have 
$\omega_k(G) = \sum_{p=0}^{\infty} (-1)^p {\rm dim}(H_k^p(G))$. 
\end{thm}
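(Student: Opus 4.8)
The plan is to mimic the classical proof of the Euler--Poincaré formula, adapted to the $k$-interaction setting. The key point is that the cohomology $H^p_k(G)$ is defined from a concrete finite-dimensional cochain complex: let $\Omega^p_k(G)$ be the space of $p$-forms in the $k$-interaction cohomology, i.e.\ functions on ordered $k$-tuples $(x_1,\dots,x_k)$ of pairwise intersecting simplices with $\sum_i \dim(x_i)=p$, antisymmetric under reorientation of each $x_i$. The exterior derivative $d_p\colon \Omega^p_k(G)\to \Omega^{p+1}_k(G)$ from the excerpt satisfies $d_{p+1}d_p=0$ because $ddA=0$ on the underlying boundary chains, so $(\Omega^\bullet_k(G),d)$ is a genuine bounded cochain complex of finite-dimensional vector spaces. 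The whole statement is then a consequence of the purely linear-algebraic fact that for any such complex the alternating sum of the dimensions of the cochain groups equals the alternating sum of the dimensions of the cohomology groups.

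First I would record the rank–nullity bookkeeping. Writing $\omega^p_k(G)=\dim \Omega^p_k(G)$, $z_p=\dim\ker d_p$, $b_p=\dim\operatorname{im} d_{p-1}$, and $h_p=\dim H^p_k(G)=z_p-b_p$, one has $\omega^p_k(G)=z_p+(\omega^p_k(G)-z_p)=z_p+b_{p+1}$ by rank–nullity applied to $d_p$. Therefore
\begin{align*}
\sum_p (-1)^p \omega^p_k(G) &= \sum_p (-1)^p (z_p+b_{p+1}) = \sum_p (-1)^p z_p - \sum_p (-1)^p b_p \\
&= \sum_p (-1)^p (z_p-b_p) = \sum_p (-1)^p h_p,
\end{align*}
where the telescoping uses that $\Omega^p_k(G)=0$ for $p$ large and for $p<0$. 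This gives $\sum_p(-1)^p\omega^p_k(G)=\sum_p(-1)^p\dim H^p_k(G)$.

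The remaining task is to identify $\sum_p (-1)^p \omega^p_k(G)$ with $\omega_k(G)$. Here I would argue that $\Omega^p_k(G)$ has a natural basis indexed, up to orientation, by the ordered tuples $(x_1,\dots,x_k)$ of pairwise intersecting simplices with $\dim(x_1)+\cdots+\dim(x_k)=p$ (each such tuple contributes one basis form once an orientation is fixed). Hence $\omega^p_k(G)$ counts exactly these tuples, and
\[
\sum_p (-1)^p \omega^p_k(G) = \sum_{x_1\sim\cdots\sim x_k} (-1)^{\dim(x_1)+\cdots+\dim(x_k)} = \sum_{x_1\sim\cdots\sim x_k} \omega(x_1)\cdots\omega(x_k) = \omega_k(G),
\]
which is precisely the definition of the $k$-th Wu characteristic from the introduction. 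Combining the two displays yields the theorem; the same argument verbatim, with $G$ replaced by the tuple $(G_1,\dots,G_k)$ and the summation ranging over $x_i\in G_i$, gives the intersection-number version $\omega_k(G_1,\dots,G_k)=\sum_p(-1)^p\dim H^p_k(G_1,\dots,G_k)$.

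I do not expect a serious obstacle: the only point needing a little care is the indexing of a basis of $\Omega^p_k(G)$ and the sign/orientation conventions, so that the count of basis elements is genuinely the unsigned count of pairwise-intersecting tuples in the prescribed dimension stratum, and so that $d_p$ is well-defined on these antisymmetric forms (this is where one invokes that the formula for $dF$ in the excerpt respects the reorientation relations and that $ddA=0$). Once that is in place, the proof is the standard telescoping argument and the combinatorial identity above, exactly as in the $k=1$ case of simplicial cohomology.
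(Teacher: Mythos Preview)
Your proof is correct and follows essentially the same approach as the paper: both arguments reduce to the standard rank--nullity telescoping identity for a bounded finite-dimensional cochain complex, showing $\sum_p(-1)^p\dim\Omega^p_k(G)=\sum_p(-1)^p\dim H^p_k(G)$, and then identify the left side with $\omega_k(G)$ via the basis of interacting tuples. The only cosmetic difference is the rearrangement of the telescoping (you write $\omega^p_k=z_p+b_{p+1}$ directly, while the paper writes $v_m-b_m=r_{m-1}+r_m$), and you make the identification $\sum_p(-1)^p\omega^p_k(G)=\omega_k(G)$ explicit where the paper leaves it implicit.
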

\begin{proof}
Here is the direct linear algebra proof:
Assume the vector space $C_m$ of $m$-forms on $G$ has dimension $v_m$.
Denote by $Z_m= {\rm ker}(d)$ the kernel of of $d$ of dimension $z_m$.
Let $R_m={\rm ran}(d)$ the range of $d$ of dimension $r_m$.
From the rank-nullity theorem $\dim(\ker(d_m)) + \dim({\rm ran}(d_m))=v_m$, we get
$z_m = v_m-r_m$. From the definition of the cohomology groups 
$H^m(G)=Z_m(G)/R_{m-1}(G)$ of dimension $b_m$ we get 
$ b_m = z_m-r_{m-1}$. Adding these two equations gives 
$$ v_m-b_m = r_{m-1}+r_m  \; . $$
Summing over $m$ (using $r_{-1}=0, r_m=0$ for $m>m_0$)
$$ \sum_{m=0}^{\infty} (-1)^m (v_m-b_m) = \sum_{m=0}^{\infty} (-1)^m (r_{m-1}+r_m) = 0 \;  $$
which implies $\sum_{m=0}^{\infty} (-1)^m v_m = \sum_{m=0}^{\infty} (-1)^m b_m$.
\end{proof}

\paragraph{}
As we will see below, there is a more elegant deformation proof. It uses the fact that
$D$ produces a symmetry between non-zero eigenvalues of $D^2$ restricted to even forms
and non-zero eigenvalues of $L=D^2$ restricted to odd forms. This implies ${\rm str}(L^n)=0$
for all $n>0$ and so the Mc-Kean-Singer relations ${\rm str}(\exp(-t L))=0$. For $t=0$, this
is the definition of $\omega_k$. In the limit $t \to \infty$ we get to the projection onto 
Harmonic forms, where 
$$  \lim_{t \to \infty} {\rm str}(\exp(-i t L)) = \sum_{p=0}^{\infty} (-1)^p {\rm dim}(H_k^p(G)) \; . $$

\section{Example computations}

\paragraph{}
For $G=\{\{1,2\},\{2,3\},\{1\},\{2\},\{3\} \}$, there are 15 interacting pairs $x \sim y$ of 
simplices:
$$  \left\{
                  \begin{array}{cc}
                   \{3\} & \{3\} \\
                   \{2\} & \{2\} \\
                   \{1\} & \{1\} \\
                   \{3\} & \{2,3\} \\
                   \{2\} & \{2,3\} \\
                   \{2\} & \{1,2\} \\
                   \{1\} & \{1,2\} \\
                   \{2,3\} & \{3\} \\
                   \{2,3\} & \{2\} \\
                   \{1,2\} & \{2\} \\
                   \{1,2\} & \{1\} \\
                   \{2,3\} & \{2,3\} \\
                   \{2,3\} & \{1,2\} \\
                   \{1,2\} & \{2,3\} \\
                   \{1,2\} & \{1,2\} \\
                  \end{array}
                  \right\} \;  . $$
The Hodge Laplacian for the Wu characteristic $\omega(G)=\omega_2(G,G)$ is 
$$ L=D^2 = \left[
                 \begin{array}{ccccccccccccccc}
                  2 & 0 & 0 & 0 & 0 & 0 & 0 & 0 & 0 & 0 & 0 & 0 & 0 & 0 & 0 \\
                  0 & 4 & 0 & 0 & 0 & 0 & 0 & 0 & 0 & 0 & 0 & 0 & 0 & 0 & 0 \\
                  0 & 0 & 2 & 0 & 0 & 0 & 0 & 0 & 0 & 0 & 0 & 0 & 0 & 0 & 0 \\
                  0 & 0 & 0 & 2 & -1 & 0 & 0 & 0 & -1 & 0 & 0 & 0 & 0 & 0 & 0 \\
                  0 & 0 & 0 & -1 & 3 & -1 & 0 & -1 & 0 & 0 & 0 & 0 & 0 & 0 & 0 \\
                  0 & 0 & 0 & 0 & -1 & 3 & -1 & 0 & 0 & 0 & -1 & 0 & 0 & 0 & 0 \\
                  0 & 0 & 0 & 0 & 0 & -1 & 2 & 0 & 0 & -1 & 0 & 0 & 0 & 0 & 0 \\
                  0 & 0 & 0 & 0 & -1 & 0 & 0 & 2 & -1 & 0 & 0 & 0 & 0 & 0 & 0 \\
                  0 & 0 & 0 & -1 & 0 & 0 & 0 & -1 & 3 & -1 & 0 & 0 & 0 & 0 & 0 \\
                  0 & 0 & 0 & 0 & 0 & 0 & -1 & 0 & -1 & 3 & -1 & 0 & 0 & 0 & 0 \\
                  0 & 0 & 0 & 0 & 0 & -1 & 0 & 0 & 0 & -1 & 2 & 0 & 0 & 0 & 0 \\
                  0 & 0 & 0 & 0 & 0 & 0 & 0 & 0 & 0 & 0 & 0 & 4 & -1 & -1 & 0 \\
                  0 & 0 & 0 & 0 & 0 & 0 & 0 & 0 & 0 & 0 & 0 & -1 & 2 & 0 & -1 \\
                  0 & 0 & 0 & 0 & 0 & 0 & 0 & 0 & 0 & 0 & 0 & -1 & 0 & 2 & -1 \\
                  0 & 0 & 0 & 0 & 0 & 0 & 0 & 0 & 0 & 0 & 0 & 0 & -1 & -1 & 4 \\
                 \end{array}
                 \right] \; .$$
It consists of three blocks: a $3 \times 3$ block $L_0$, a 
$8 \times 8$ block $L_1$ and a $4 \times 4$ block $L_2$. 
We have $b=(b_0,b_1,b_2,\dots)=(0,1,0,\dots)$ and Wu characteristic 
$\omega(G) = -1$. Similarly as we can compute
the Euler characteristic $\chi(G)=2-3=-1$ from the $f$-vector $v=(3,2)$ as $v \cdot e$ with 
$e=(1,-1)$, we can compute the Wu characteristic from the $f$-matrix 
$V(G)=\left[ \begin{array}{cc} 3 & 4 \\ 4 & 4 \\ \end{array} \right]$ by 
$\omega(G) = e \cdot (V(G) \cdot e)$. The only non-vanishing cohomology is $H^1(G,G)$. \\

Let us look at the case, where $H=\{ \{1\} \}$ is a boundary point. Now, the interacting simplices are
$\left\{ \begin{array}{cc} \{1\} & \{1\} \\ \{1,2\} & \{1\} \\ \end{array} \right\}$. The Hodge operator
is the $2 \times 2$ matrix $L=I_2$ which has two $1 \times 1$ blocks $1$. The cohomology is trivial.
If $L=\{ \{2\} \}$ is the middle point, then there are 3 interacting simplices 
$\left\{ \begin{array}{cc} \{2\} & \{2\} \\ \{2,3\} & \{2\} \\ \{1,2\} & \{2\} \\ \end{array} \right\}$. 
And the Hodge operator $L(G,H)=D(G,H)^2$ is given by 
$$ L = \left[ \begin{array}{ccc} 2 & 0 & 0 \\ 0 & 1 & -1 \\ 0 & -1 & 1 \\ \end{array} \right]  \; . $$
The Betti vector is $b(G,H)=(0,1,0,\dots)$. A basis of the cohomology is the vector on $0+1$-forms
which is constant $1$. \\

\paragraph{}
If $G$ is a $2$-sphere given by an octahedron
$G=\{\{1\},\{2\},\{3\},\{4\},\{5\},\{6\},\{1,2\},\{1,3\},\{1,4\},\{1,5\},\{2,3\}
    ,\{2,4\},\{2,6\},\{3,5\},\{3,6\},\{4,5\},\{4,6\},\{5,6\},\{1,2,3\},\{1,2,4\},\{1,3,5\},\{1,
    4,5\},\{2,3,6\},\{2,4,6\},\{3,5,6\},\{4,5,6\}\}$. If $H=\{1\}$ is a single point, then
the interacting simplices are $\left\{
                   \begin{array}{cc}
                    \{1\} & \{1\} \\
                    \{1,5\} & \{1\} \\
                    \{1,4\} & \{1\} \\
                    \{1,3\} & \{1\} \\
                    \{1,2\} & \{1\} \\
                    \{1,4,5\} & \{1\} \\
                    \{1,3,5\} & \{1\} \\
                    \{1,2,4\} & \{1\} \\
                    \{1,2,3\} & \{1\} \\
                   \end{array}
                   \right\}$. The Hodge Laplacian to this pair $(G,H)$ is 
$$ L = \left[
                   \begin{array}{ccccccccc}
                    4 & 0 & 0 & 0 & 0 & 0 & 0 & 0 & 0 \\
                    0 & 3 & 0 & 0 & 1 & 0 & 0 & 0 & 0 \\
                    0 & 0 & 3 & 1 & 0 & 0 & 0 & 0 & 0 \\
                    0 & 0 & 1 & 3 & 0 & 0 & 0 & 0 & 0 \\
                    0 & 1 & 0 & 0 & 3 & 0 & 0 & 0 & 0 \\
                    0 & 0 & 0 & 0 & 0 & 2 & 1 & -1 & 0 \\
                    0 & 0 & 0 & 0 & 0 & 1 & 2 & 0 & -1 \\
                    0 & 0 & 0 & 0 & 0 & -1 & 0 & 2 & 1 \\
                    0 & 0 & 0 & 0 & 0 & 0 & -1 & 1 & 2 \\
                   \end{array}
                   \right] \; . $$
Its null-space of $L$ is spanned by a single vector $(0, 0, 0, 0, 0, -1, 1, -1, 1)$ so that
the Betti vector is $(0,0,1)$. \\

\paragraph{}
If $G$ is a $3$-sphere, obtained by suspending the octahedron, then $\chi(G)=\omega(G)=0$
and $b_{k=1}(G) = (1,0,0,1)$ and $b_{k=2}(G) =(0,0,0,1,0,0,1)$. Also for a $4$-sphere,
like a suspension of a $3$-sphere $G$, the simplicial cohomology has just shifted up
$b_{k=2}(G) = (0,0,0,0,1,0,0,0,1)$. 

\paragraph{}
Here is a table of some linear, quadratic and cubic cohomology computations: 

\begin{tabular}{|ll|ll|ll|l|} \hline
 $\chi=\omega_1$ & Betti  &  $\omega=\omega_2$& Betti      &$\omega_3$ & Betti            & Complex      \\ \hline
  1 &  (1)      &   1      &(1)                &1          & (1)                          & $K_1$ (point)   \\
  1 &  (1,0)    &  -1      &(0,1,0)            &1          & (0,0,1,0)                    & $K_2$ (1-ball) \\
  1 &  (1,0,0)  &   1      &(0,0,1,0,0)        &1          & (0,0,0,0,1,0,0)              & $K_3$ (triangle) \\
  1 &  (1,0,0,0)&  -1      &(0,0,0,1,0,0,0)    &1          & (0,0,0,0,0,0,1,0,0,0)        & $K_4$ (tetrahedron) \\
  0 &  (1,1)    &   0      &(0,1,1)            &0          & (0,0,1,1)                    & $C_4$ (circle)  \\
  2 &  (1,0,1)  &   2      &(0,0,1,0,1)        &2          & (0,0,0,0,1,0,1)              & Octahedron   \\
  2 &  (1,0,1)  &   2      &(0,0,1,0,1)        &2          & (0,0,0,0,1,0,1)              & Icosahedron  \\
  0 &  (1,0,0,1) &  0      &(0,0,0,1,0,0,1)    &0          & (0,0,0,0,0,0,1,0,0,1)        & 3-sphere     \\
  2 &  (1,0,0,0,1)& 2      &(0,0,0,0,1,0,0,0,1)&2          & (0,0,0,0,0,0,0,0,1,0,0,0,1)  & 4-sphere     \\
  1 &  (1,0,0)  &   1      &(0,0,1,0,0)        &1          & (0,0,0,0,1,0,0)              & 2-ball       \\
  1 &  (1,0)    &   1      &(0,0,1)           &-5          & (0,0,0,5)                    & 3-star       \\
  1 &  (1,0)    &   5      &(0,0,5)          &-23          & (0,0,0,23)                   & 4-star       \\
  1 &  (1,0)    &  11      &(0,0,11)         &-59          & (0,0,0,59)                   & 5-star       \\
  1 &  (1,0)    &   1      &(0,0,0,0,1)       &25          & (0,0,0,0,0,0,25)             & 3-star x 3-star   \\
 -1 &  (1,2)    &   7      &(0,0,7)          &-25          & (0,0,0,25)                   & Figure 8     \\
 -2 &  (1,3)    &  22      &(0,0,22)        &-122          & (0,0,0,122)                  & 3 bouquet    \\
 -3 &  (1,4)    &  45      &(0,0,35)        &-339          & (0,0,0,339)                  & 4 bouquet    \\
 -4 &  (1,5)    &  76      &(0,0,76)        &-724          & (0,0,0,724)                  & 5 bouquet    \\
  1 &  (1,0)    &  3       &(0,0,3,0,0)       &-5          & (0,0,0,6,1,0,0)              & Rabbit       \\
  0 &  (1,1)    &  2       &(0,0,2,0,0)        &0          & (0,0,0,1,1,0,0)              & House        \\
 -4 &  (1,5)    & 20       &(0,0,20)         &-52          & (0,0,0,52)                   & Cube         \\
 -16 & (14,30)  &112       &(0,0,112)       &-400          & (0,0,0,400)                  & Tesseract    \\
  0  & (1,1,0)  &  0       &(0,0,0,0,0)        &0          & (0,0,0,0,1,1,0)              & Moebius strip   \\
  0  & (1,1,0)  &  0       &(0,0,1,1,0)        &0          & (0,0,0,0,1,1,0)              & Cylinder     \\
  1  & (1,0,0)  &  1       &(0,0,0,0,1)        &1          & (0,0,0,0,0,0,1)              & Projective plane  \\
  0  & (1,1,0)  &  0       &(0,0,0,1,1)        &0          & (0,0,0,0,0,1,1)              & Klein bottle \\ \hline
\end{tabular}

\paragraph{} Let us look at more examples of quadratic interaction cohomology $H^p(G,H)$
with two different complexes $G,H$. The cohomology groups $H^p(G,H)$ are trivial vector spaces
if $G \cap H = \emptyset$, as there are then no interaction. The Moebius strip is a rare 
example of trivial interaction cohomology $H^p(G,G)$. 

\begin{tabular}{|l|l|l|l|} \hline
 $\omega=\omega_2$ & Betti vector    & Complex G  & Complex H  \\ \hline
 $-1$              & (0,1,0)         & interval   &  interval  \\ \hline
 $-1$              & (0,1)           & interval   &  point inside \\ \hline
 $0$               & (0,0)           & interval   &  boundary point  \\ \hline
 $0$               & (0,1,1)         & circle     &  circle \\ \hline
 $-1$              & (0,1)           & circle     &  point  \\ \hline
 $-2$              & (0,2)           & circle     &  two points  \\ \hline
 $-1$              & (0,1,0)         & circle     &  sub interval \\ \hline
 $11$              & (0,0,11)        & star(5)    &  star(5)    \\ \hline
 $4$               & (0,4)           & star(5)    &  central point \\ \hline
 $0$               & (0,0)           & star(5)    &  boundary point\\ \hline
 $-1$              & (0,1,0)         & star(5)    &  radius interval  \\ \hline
 $0$               & (0,0,1,1)       & 2D disk       & Circle in interior \\ \hline
 $1$               & (0,0,1,0)       & 2D disk       & Circle touching boundary \\ \hline
 $1$               & (0,0,1,0)       & 2D disk       & 2D disc  \\ \hline
 $1$               & (0,0,1)         & 2D disk       & Point inside \\ \hline
 $0$               & (0,0,0)         & 2D disk       & Point at boundary \\ \hline
\end{tabular}

\begin{figure}
\scalebox{0.32}{\includegraphics{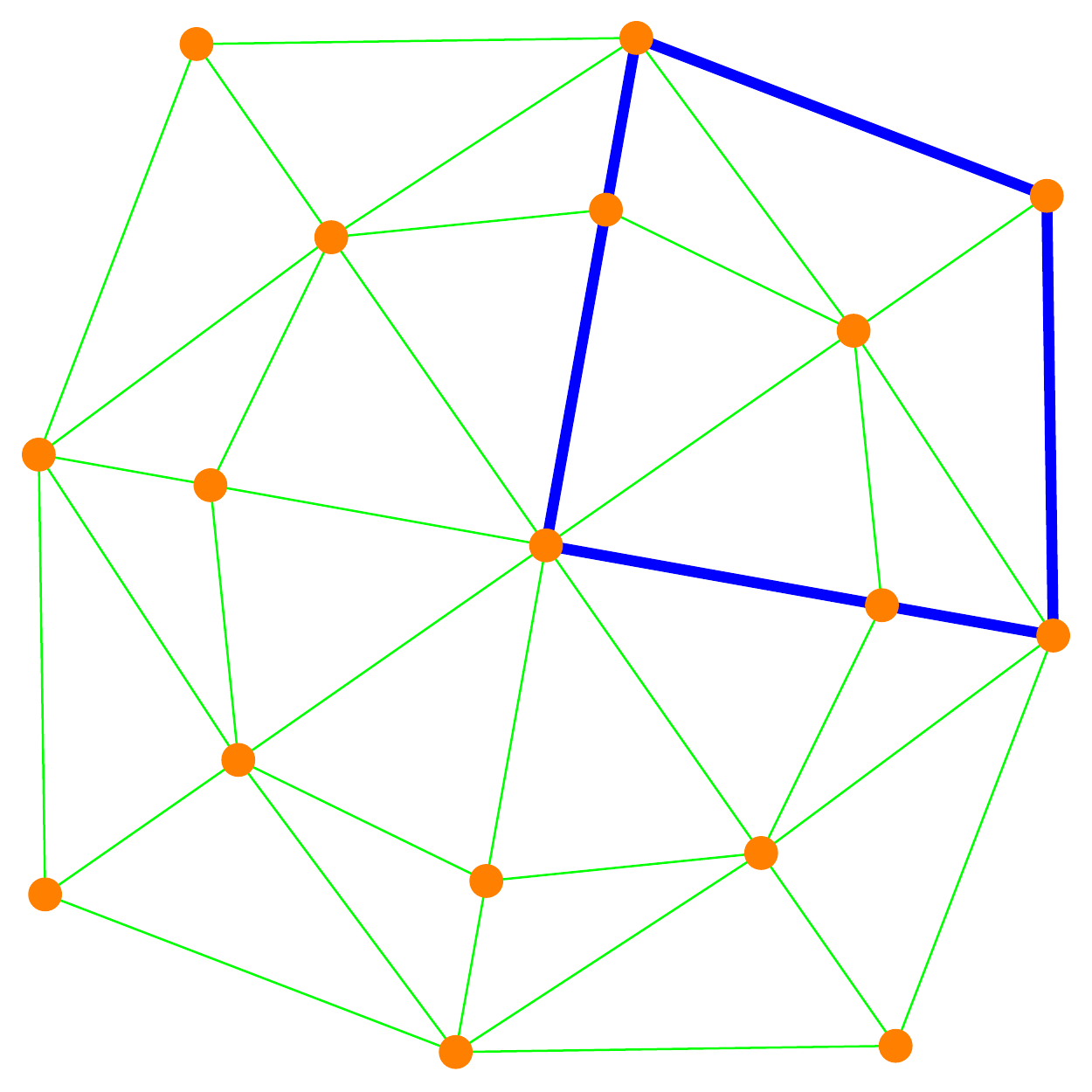}} 
\scalebox{0.32}{\includegraphics{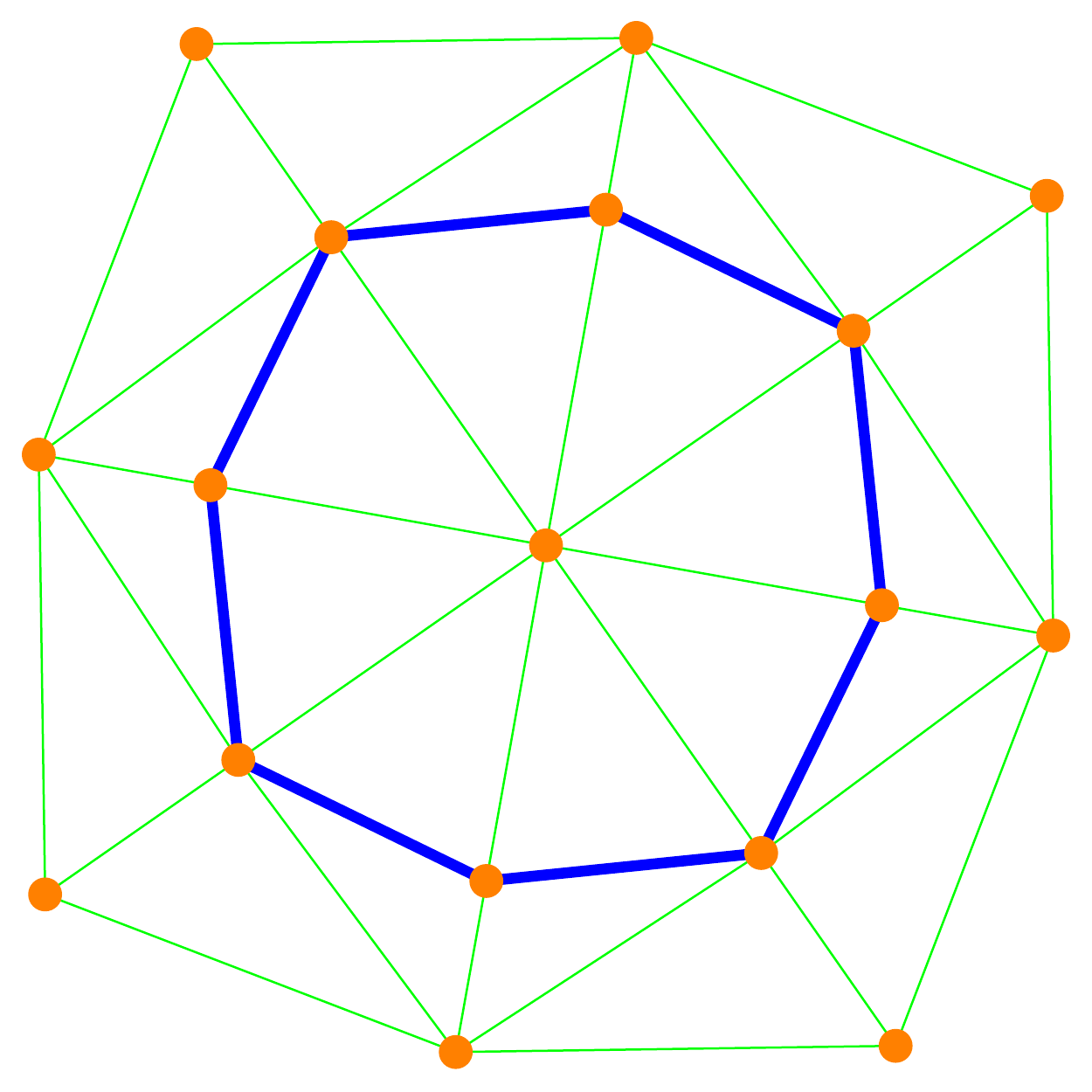}}
\scalebox{0.32}{\includegraphics{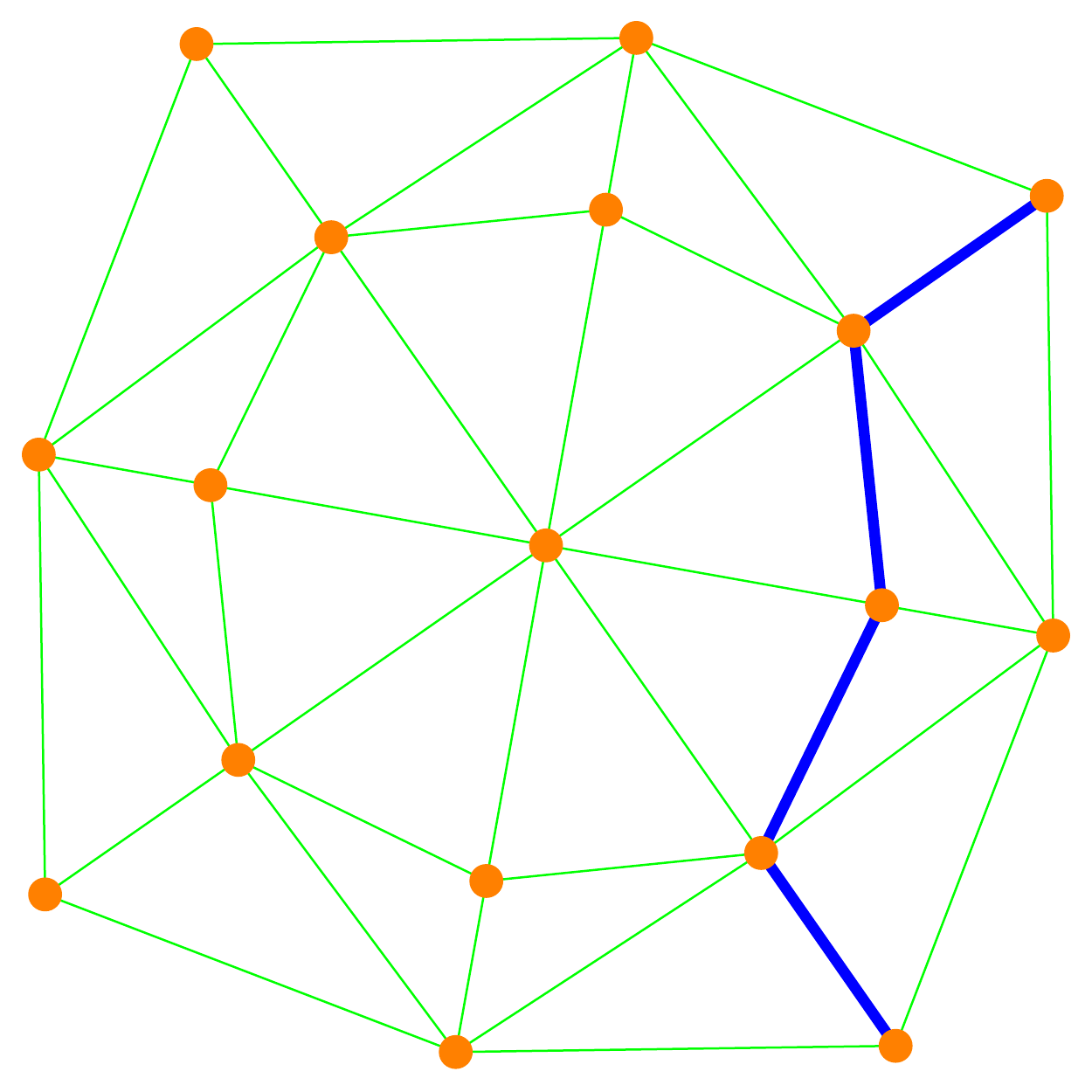}}
\caption{
\label{wheel}
The background complex $G$ is a Barycentric refinement of the wheel graph. 
The $k=2$ cohomology has the Betti vector $b(G)=(0,0,1,0,0)$.
To the very left and very right we see unit sphere subgraphs $H$ 
which touch the boundary. The Betti vector for the cohomology $H(G,H)$ 
is $b(G,H)=(0, 0, 1, 0)$ which matches with $\omega(G,H)=1$. In the middle we see
a circular subgraph $H$ which does not hit the boundary. In that case
the cohomology is $b(G,H)=(0, 0, 1, 1)$ leading to the Wu intersection number
$\omega(G,H)=0$.
}
\end{figure}

\paragraph{}
We can have seen examples, where $H$ was a subcomplex of $G$. 
It is also possible that $G,H$ just intersect. In that case, only a neighborhood 
of the intersection set matters. We have computed Wu characteristic of 
nicely intersecting one dimensional graphs. To analyze this, we only need to 
understand a single intersection cross, where the cohomology is $b(G,H) = (0,0,1)$. 
Now, if $G$ is a circle and $H$ is a circle and the two circles intersect nicely, 
we have $b(G,H) = (0,0,2)$ as there are two intersection points. 

\begin{figure}
\scalebox{0.32}{\includegraphics{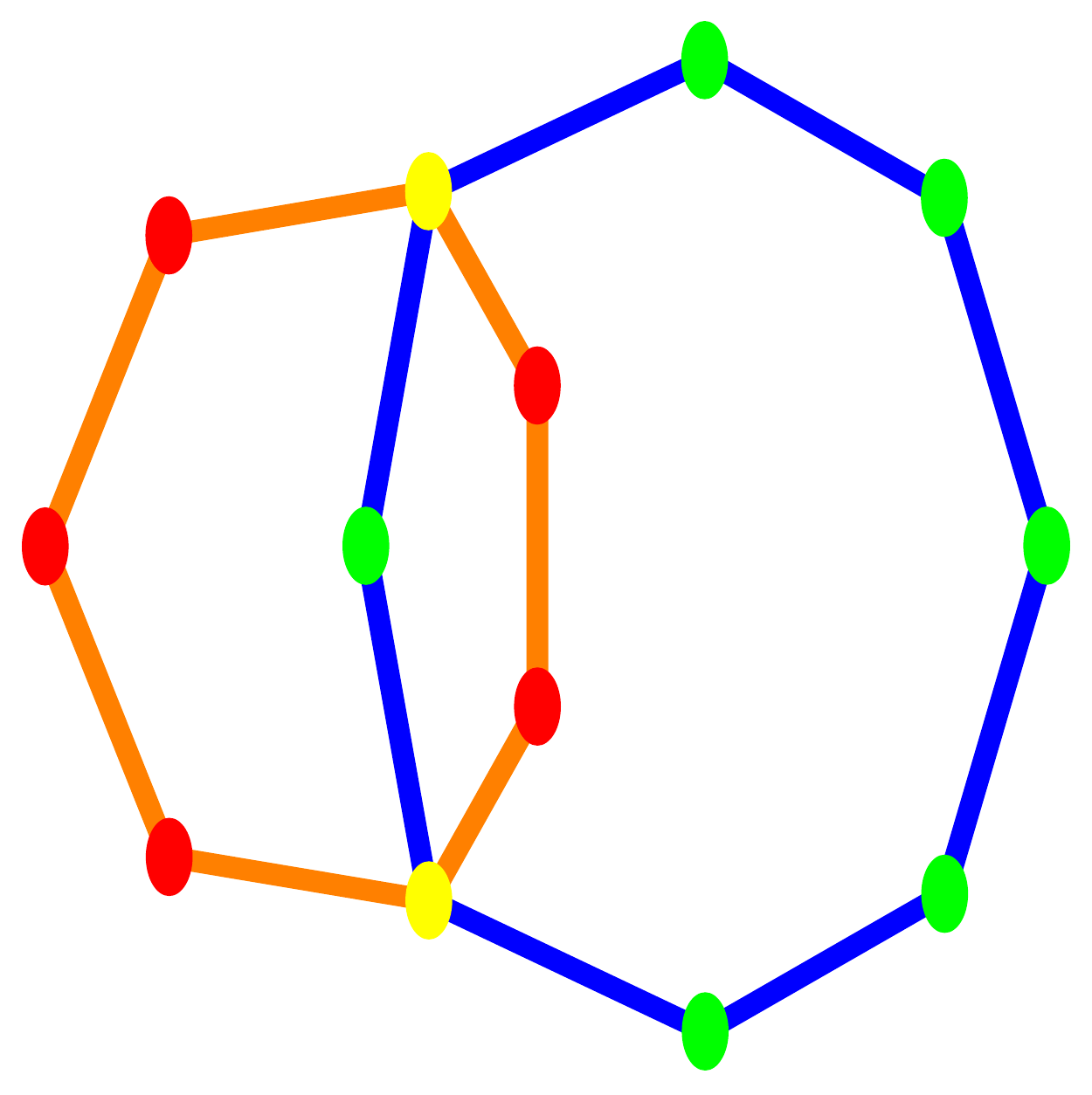}} 
\caption{
\label{twocircles}
The intersection of two circles $G,H$ has the Betti vector 
$b(G,H)=(0,0,2)$ and $\omega(G,H)=2$. 
}
\end{figure}

\section{Hodge relations}

\paragraph{}
The Euler-Poincar\'e relations equating combinatorial and cohomological expressions for $H_k^p(G)$ 
can be proven dynamically using the heat flow $\exp(-t L)$. 
To do so, it is important to build a basis of the
cohomology using harmonic forms. For $t=0$, we have by definition the Euler characteristic and for $t \to \infty$, the
super trace of the identity on harmonic forms. 
These Hodge relations play there an important role. It is good to revisit them in the
more general frame work of interaction cohomology. The proofs are the same than for $k=1$ however.

\paragraph{}
The Hodge theorem states that $H^p(G_1,\dots,G_k)$ is isomorphic to the space 
$E_0^p(G_1, \dots< G_k)$ of harmonic $p$-forms. 

\begin{lemma}
a) $L$ commutes with $d$ and $d^*$, \\
b) $L f = 0$ is equivalent to $df=0$ and $d^* f = 0$ \\
c) $f=dg$ and $h=d^*k$ are orthogonal. \\
d) ${\rm im}(d) \cup {\rm im}(d^*)$ span ${\rm im}(L)$.
\end{lemma}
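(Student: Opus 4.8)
The plan is to reduce all four statements to three purely algebraic facts that hold on every interaction level $k$ exactly as on level $k=1$. First, $d^2=0$: this is inherited from $ddA=0$ for boundary chains, together with the sign bookkeeping in $dF(x_1,\dots,x_k)=\sum_j (-1)^{\sum_{i<j}{\rm dim}(x_i)} F(A_1,\dots,dA_j,\dots,A_k)$, and the same formula shows that $d$ raises the form degree $p=\sum_i {\rm dim}(x_i)$ by one. Second, the space $\Lambda^p$ of $p$-forms carries the inner product in which the indicator functions of the pairwise interacting simplex tuples form an orthonormal basis, and $d^*$ is by definition the adjoint of $d$ for it. Third, since $d^2=0$ and hence $(d^*)^2=0$, the cross terms in $L=D^2=(d+d^*)^2$ vanish and $L=dd^*+d^*d$, a block-diagonal, self-adjoint, positive semidefinite operator with $L_p:\Lambda^p\to\Lambda^p$.

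Granting these, part a) is the one-line computation $Ld=(dd^*+d^*d)d=dd^*d=d(d^*d+dd^*)=dL$ and, symmetrically, $Ld^*=d^*L$. Part c) is $\langle dg,d^*k\rangle=\langle d^2g,k\rangle=0$, so ${\rm im}(d)\perp{\rm im}(d^*)$. For part b), if $df=0$ and $d^*f=0$ then $Lf=(dd^*+d^*d)f=0$ trivially; conversely, from $\langle Lf,f\rangle=\langle d^*f,d^*f\rangle+\langle df,df\rangle=\|d^*f\|^2+\|df\|^2$ one sees that $Lf=0$ forces $df=0$ and $d^*f=0$, where positive-definiteness of the inner product is what is used. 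Part d) then follows from the finite-dimensional identity ${\rm im}(L)={\rm ker}(L)^{\perp}$ for a self-adjoint operator: by b), ${\rm ker}(L)={\rm ker}(d)\cap{\rm ker}(d^*)$, and since ${\rm im}(d)^{\perp}={\rm ker}(d^*)$ and ${\rm im}(d^*)^{\perp}={\rm ker}(d)$ (again just adjointness), $\bigl({\rm im}(d)+{\rm im}(d^*)\bigr)^{\perp}={\rm ker}(L)$; taking complements in the finite-dimensional form space gives ${\rm im}(d)+{\rm im}(d^*)={\rm ker}(L)^{\perp}={\rm im}(L)$. Together with c) this is the orthogonal Hodge decomposition $\Lambda^p={\rm im}(d)\oplus{\rm im}(d^*)\oplus{\rm ker}(L_p)$ on each level $k$, which yields the isomorphism $H^p_k(G_1,\dots,G_k)\cong E^p_0(G_1,\dots,G_k)$ stated before the lemma.

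The only step that needs genuine care — and the one I would write out carefully — is the first: verifying that the generalized exterior derivative on tuples of pairwise intersecting simplices really squares to zero with the stated signs and is graded so that $L$ is block diagonal. Once $d^2=0$, $L=dd^*+d^*d$, and positive-definiteness of the inner product are secured, a)--d) are formal and word-for-word the classical $k=1$ arguments; no new phenomenon appears at higher interaction level, which is exactly why the paper remarks that ``the proofs are the same than for $k=1$''.
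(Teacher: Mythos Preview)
Your proof is correct and follows essentially the same route as the paper: parts a), b), c) are verbatim the paper's computations (using $d^2=0$, the inner-product identity $\langle Lf,f\rangle=\|df\|^2+\|d^*f\|^2$, and adjointness). For d) the paper is even more direct than your orthogonal-complement argument---it simply writes $Lf=d(d^*f)+d^*(df)$, which exhibits every element of ${\rm im}(L)$ as a sum from ${\rm im}(d)+{\rm im}(d^*)$; your version via ${\rm im}(L)={\rm ker}(L)^\perp$ gives the same inclusion (and in fact the reverse one too), so nothing is lost.
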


\begin{proof}
a) follows from from $d^2=(d^*)^2=0$ so that $d^* L = L d^* = d^* d d^*$ and $d L = L d = d d^* d$.\\
b) if $Lf=0$ then $0=\langle f,Lf \rangle=\langle d^*f,d^f \rangle + \langle df,df \rangle$
shows that both $d^* f=0$ and $df=0$. The other direction is clear.  \\
c) $\langle f,h \rangle = \langle dg,d^*k \rangle 
   = \langle d dg,k \rangle = \langle g,d^* d^* k \rangle=0$.  \\
d) Write $g=Lf = d (d^* f) + d^* (d f)$.
\end{proof}

This shows that any $p$-form can be written as a sum of an exact, a co-exact and 
a harmonic $p$-form:

\begin{lemma}[Hodge decomposition]
There is an orthogonal decomposition
$\Omega = {\rm im}(L) + {\rm ker}(L) = {\rm im}(d) + {\rm im}(d^*) + {\rm ker}(L)$.
Any $p$-form $g$ can be written as $g= df + d^* h + k$, where $k$ is harmonic.
\end{lemma}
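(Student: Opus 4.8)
The plan is to deduce the Hodge decomposition directly from the preceding lemma, treating it as an exercise in finite-dimensional linear algebra on the real inner product space $\Omega$ of all forms (with the natural inner product making distinct simplex tuples orthonormal). Everything here is finite-dimensional, so $L=D^2=(d+d^*)^2$ is a symmetric operator, and the spectral theorem immediately gives the orthogonal splitting $\Omega = {\rm ker}(L) \oplus {\rm im}(L)$. The work is to identify ${\rm im}(L)$ with ${\rm im}(d) + {\rm im}(d^*)$ and to check this latter sum is itself orthogonal.

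First I would record that $d^2=0$ and $(d^*)^2=0$, and that ${\rm im}(d) \perp {\rm im}(d^*)$: for $f=dg$ and $h=d^*k$ one has $\langle f,h\rangle = \langle dg, d^*k\rangle = \langle d\,dg, k\rangle = 0$, which is exactly part c) of the previous lemma. So ${\rm im}(d) + {\rm im}(d^*)$ is an orthogonal (hence direct) sum. Next, part d) of the previous lemma says ${\rm im}(d) + {\rm im}(d^*)$ spans ${\rm im}(L)$: concretely, any $g \in {\rm im}(L)$ is $g = Lf = d(d^*f) + d^*(df) \in {\rm im}(d) + {\rm im}(d^*)$, and conversely ${\rm im}(d) \subseteq {\rm im}(L)$ and ${\rm im}(d^*) \subseteq {\rm im}(L)$ because, e.g., $dg$ is harmonic-free — more carefully, one shows ${\rm ker}(L) = {\rm ker}(d) \cap {\rm ker}(d^*)$ (part b), whose orthogonal complement is $\overline{{\rm im}(d^*)} + \overline{{\rm im}(d)} = {\rm im}(d^*)+{\rm im}(d)$ since $d^* = $ adjoint of $d$ implies ${\rm ker}(d)^\perp = {\rm im}(d^*)$ and ${\rm ker}(d^*)^\perp = {\rm im}(d)$. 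Combining: ${\rm im}(L) = {\rm ker}(L)^\perp = ({\rm ker}(d)\cap{\rm ker}(d^*))^\perp = {\rm im}(d) + {\rm im}(d^*)$.

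Then I would assemble the statement: given any $p$-form $g$, decompose it orthogonally as $g = g_0 + g_1$ with $g_0 \in {\rm ker}(L)$ harmonic and $g_1 \in {\rm im}(L)$; write $g_1 = d(d^*w) + d^*(dw)$ for some $w$ with $Lw = g_1$ (such $w$ exists since $g_1 \in {\rm im}(L)$), so setting $f = d^*w$ and $h = dw$ gives $g = df + d^*h + k$ with $k=g_0$ harmonic, and the three summands are mutually orthogonal by part c) together with the fact that harmonic forms are orthogonal to both ${\rm im}(d)$ and ${\rm im}(d^*)$ (since ${\rm ker}(L) \perp {\rm im}(L) \supseteq {\rm im}(d),{\rm im}(d^*)$). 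Since $L$ is block-diagonal in the grading, all of this respects the degree $p$, so the decomposition is a decomposition of $p$-forms. Finally I would note that this yields the Hodge isomorphism $H^p_k \cong E_0^p$: a cocycle $g$ with $dg=0$ has its $d^*h$-component vanish (because $d d^* h = 0$ forces $\|d^*h\|^2 = \langle h, d d^* h\rangle = 0$), so $g = df + k$ with $k$ harmonic and uniquely determined, giving a well-defined linear bijection between cohomology classes and harmonic forms.

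There is really no serious obstacle: the entire argument is the classical finite-dimensional Hodge argument, and the only non-formal inputs — that $L$ commutes with $d,d^*$, that ${\rm ker}(L) = {\rm ker}(d)\cap{\rm ker}(d^*)$, that ${\rm im}(d)\perp{\rm im}(d^*)$, and that these images span ${\rm im}(L)$ — are precisely parts a)--d) of the lemma just proved. The one point requiring a word of care is the interplay ${\rm ker}(d)^\perp = {\rm im}(d^*)$, which is the standard fact that for a linear map between finite-dimensional inner product spaces the orthogonal complement of the kernel is the image of the adjoint; I would state this explicitly rather than leave it implicit, since the grading here is nonstandard and a reader might worry about it. Everything else is bookkeeping.
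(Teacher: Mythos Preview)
Your proposal is correct and follows essentially the same approach as the paper: symmetry of $L$ gives the orthogonal splitting $\Omega = {\rm ker}(L)\oplus{\rm im}(L)$, and parts c) and d) of the preceding lemma identify ${\rm im}(L)$ as the orthogonal sum ${\rm im}(d)\oplus{\rm im}(d^*)$. Your write-up is simply more detailed than the paper's two-sentence proof; note also that your final paragraph (the bijection between cohomology classes and harmonic forms) is not part of this lemma but is exactly the content of the subsequent Hodge--Weyl theorem.
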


\begin{proof}
The matrix $L: \Omega(G_1, \dots, G_k) \to \Omega(G_1, \dots, G_k)$ is symmetric so that the
image and kernel are perpendicular. By the lemma, the image of $L$ splits into two
orthogonal components ${\rm im}(d)$ and ${\rm im}(d^*)$.
\end{proof}

\begin{thm}[Hodge-Weyl]
The dimension of the vector space of harmonic $p$-forms 
is equal to $b_p(G_1, \dots, G_k)$. Every cohomology class has a unique harmonic representative.
\end{thm}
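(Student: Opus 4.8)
The plan is to deduce the Hodge--Weyl theorem directly from the Hodge decomposition lemma together with the first lemma on the relation between $L$, $d$, and $d^*$, exactly as in the classical case $k=1$; the point of this statement is that nothing in the proof uses any special feature of simplicial cohomology beyond having a concrete exterior derivative $d$ with $d^2=0$ and a finite-dimensional inner product space of forms. First I would fix $p$ and restrict attention to the space $\Omega^p = \Omega^p(G_1,\dots,G_k)$ of $p$-forms, with the exterior derivative $d_{p-1}\colon \Omega^{p-1}\to\Omega^p$ and $d_p\colon \Omega^p\to\Omega^{p+1}$, and write $\mathcal{H}^p = \ker(L_p) = \ker(d_p)\cap\ker(d_p^*)$ for the space of harmonic $p$-forms, the second equality being part b) of the first lemma.

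The main step is to produce the isomorphism $H^p_k(G_1,\dots,G_k) \cong \mathcal{H}^p$. Using the Hodge decomposition lemma, every $p$-form $g$ splits orthogonally as $g = d_{p-1}f + d_p^* h + k$ with $k \in \mathcal{H}^p$. I would then check: (i) if $g$ is a cocycle, i.e. $d_p g = 0$, then applying $d_p$ to the decomposition and using $d_p d_{p-1} = 0$ and $\langle d_p d_p^* h, d_p d_p^* h\rangle = \langle d_p^* h, d_p^* d_p d_p^* h\rangle = 0$ (here one uses that $d_p d_p^* h$ lies in the image of $d_p$ while also, after pairing, in a space orthogonal to it, forcing $d_p^* h$'s exact-type component to vanish) — more cleanly, $0 = d_p g = d_p d_p^* h$ gives $0 = \langle h, d_p^* d_p d_p^* h\rangle = \|d_p d_p^* h\|^2$ is not quite it; the correct line is $0=\langle d_p^* h, d_p^* g\rangle$ is automatic, so instead: from $d_p g = 0$ and $g = d_{p-1}f + d_p^*h + k$ we get $0 = d_p d_p^* h$, hence $0 = \langle d_p^* h, d_p^* h\rangle$ after pairing with $h$, so $d_p^* h = 0$ and $g = d_{p-1}f + k$ with $k$ harmonic. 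Thus every cohomology class in $Z^p/ \,\mathrm{im}(d_{p-1})$ has a harmonic representative. (ii) Uniqueness: if $k$ is harmonic and $k = d_{p-1}f$ is also exact, then $\|k\|^2 = \langle d_{p-1}f, k\rangle = \langle f, d_{p-1}^* k\rangle = 0$ since $d^* k = 0$, so $k = 0$; hence the map $\mathcal{H}^p \to H^p_k$ sending $k$ to its class is injective. Combining (i) and (ii), this map is an isomorphism, so $\dim \mathcal{H}^p = \dim H^p_k(G_1,\dots,G_k) = b_p(G_1,\dots,G_k)$, and each class has a unique harmonic representative.

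The only thing to be careful about — and the one genuine obstacle, though a mild one — is making sure the orthogonality manipulations are applied at the correct graded level, since in this paper $d$ and $L$ are assembled into single operators on $\Omega = \bigoplus_p \Omega^p$; because $L = D^2$ is block diagonal, $L_p = L|_{\Omega^p}$ is well defined and symmetric, and the decomposition lemma restricts to each $\Omega^p$, so the graded bookkeeping goes through verbatim. I would also note explicitly that finite-dimensionality guarantees $\Omega^p = \mathrm{im}(L_p)\oplus\ker(L_p)$ with the two summands orthogonal, which is what the decomposition lemma already gave us. No further input is needed, and the statement follows. I would close by remarking that, exactly as in the $k=1$ case, this harmonic representation is what powers the heat-flow proof of Euler--Poincar\'e sketched after Theorem~1.
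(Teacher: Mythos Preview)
Your proof is correct and follows the same approach as the paper: use the Hodge decomposition $g = df + d^*h + k$, show that closedness forces $d^*h = 0$, and conclude each class has a harmonic representative. Your version is in fact more complete than the paper's very terse argument --- you spell out why $d^*h$ vanishes (via $0 = d d^*h$ and pairing with $h$) and you prove uniqueness, whereas the paper asserts uniqueness in the statement but only sketches existence; the mid-paragraph fumbling where you try and discard two arguments before landing on the right one should simply be cleaned up.
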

\begin{proof}
If $Lf=0$, then $df=0$ and so $L f = d^* d f=0$. Given a closed $n$-form $g$
then $dg=0$ and the Hodge decomposition shows $g=df + k$ so that $g$
differs from a harmonic form only by $df$ and so that this harmonic form
is in the same cohomology class than $f$. 
\end{proof}

The {\bf harmonic $k$-form} represent therefore cohomology classes for the cohomologies
$H^p(G_1,\dots, G_k)$. 

\section{Lefschetz fixed point formula}

\paragraph{}
In the case of Euler characteristic {\bf Lefschetz number} of an automorphism $T$ of a complex $G$
is the super trace of the induced linear map on simplicial cohomology. If $T$ is
the identity, then this is the definition of the cohomological Euler characteristic.
We want to show here that the Lefschetz fixed point formula relates more generally
the Lefschetz number with the fixed simplices of $T$.
If $T$ is the identity, then every simplex is fixed and the Lefschetz fixed point
formula is the Euler-Poincar\'e formula. Since we have a cohomology for Wu characteristic,
we can look at the super trace of the map induced on the cohomology groups and get
so a new Lefschetz number.

\paragraph{}
We also have fixed pairs of simplices $(x,y)$ and an {\bf $2$-interaction index}
$$ i_T(x,y) = \omega(x) \omega(y) {\rm sign}(T|x) {\rm sign}(T|y) \; . $$
Now, the analogue quadratic theorem is that
$L_2(T) = \sum_{x \sim y \in G, T(x)=x,T(y)=y} i_{x,y}(T)$,
where the sum is over all pairs of simplices, where one is fixed by $T$.
This is the Lefschetz fixed point formula for Wu characteristic.
In the case of the identity, every pair of intersecting simplices is a fixed point
and the Lefschetz fixed point formula reduces to the Euler-Poincar\' formula for
Wu characteristic.
For example, for an orientation reversing involution on the circular graph $G=C_n$
with $n \geq 4$. The space $H^{0,1}(G)$ is 6-dimensional as is $H^{1,1}(G)$.

\paragraph{}
An {\bf automorphism} $T$ of $G$ is a permutation on the finite set $V$ of zero-dimensional 
elements in $G$ such that $T$ induces a permutation on $G$. More generally, one could look
also at endomorphisms, maps which map simplices to simplices and preserve the order, but since
we are in a finite frame work, we can always restrict to the attractor $\bigcap_n T^n(G)$ on
which $T$ then induces an automorphism. It is therefore almost no restriction of generality
to look at automorphisms.  

\paragraph{}
Let ${\rm Aut}(G)$ be the group of automorphisms of $G$. Any $T \in Aut(G)$ induces
a linear map on each interaction cohomology group $H^p_k(G)$. 
To realize that the matrix belonging to the transformation take an orthonormal 
basis for the kernel of $L_k$ and define $A_{ij}=f_i(T) f_j$. 
If $L|H(G)$ denotes the induced map on cohomology,
define the {\bf Lefschetz number} $\chi_T(G) = {\rm str}(L|H(G))$. In the special case, when $T$ is 
the identity $T=1$, the Lefschetz number is the Wu characteristic. 
In the case $k=2$, a pair of intersecting simplices $(x,y)$ is called a {\bf fixed point} of $T$, 
if $T(x)=x,T(y)=y$. More generally, we call a $k$-tuple $(x_1, \dots, x_k)$ of pairwise
intersecting simplices to be a fixed point of $T$ if $T(x_j)=x_j$. 
Define the {\bf index} of a fixed point as
$$  i_T(x_1,\dots,x_k) = \prod_{j=1}^k (-1)^{\rm dim(x_j)} {\rm sign}(T|x_j) \; . $$

\paragraph{}
In the case $k=1$, the index is 
$$  i_T(x)= (-1)^{{\rm dim}(x)} {\rm sign}(T|x) \; .$$
In the case $k=2$, the index is 
$$  i_T(x,y)= (-1)^{{\rm dim}(x) + {\rm dim}(y)} {\rm sign}(T|x) {\rm sign}(T|y)  \; .$$
Here is the Brouwer-Lefschetz-Hopf fixed point formula generalizing \cite{brouwergraph}. 

\begin{thm}[Brouwer-Lefschetz fixed point theorem]
$\chi_T(G) = \sum_{T(x=x} i_T(x)$
\end{thm}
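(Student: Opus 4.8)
The plan is to mimic the classical Hopf-style proof of the Lefschetz fixed point formula, exploiting the fact that the interaction cohomology is built from an explicit exterior derivative $d_k$ and hence carries a finite-dimensional chain complex on which $T$ acts. First I would set up the algebraic framework: let $\Omega^p = \Omega^p_k(G)$ be the space of $p$-forms on pairwise intersecting $k$-tuples of simplices, so that $T$ induces a linear map $T^\#$ on each $\Omega^p$ which commutes with $d_k$. The standard Hopf trace lemma then gives that the super trace on the chain level equals the super trace on cohomology,
\begin{equation}
  \chi_T(G) = \sum_{p} (-1)^p \, \mathrm{tr}\bigl(T^\# | H^p_k(G)\bigr) = \sum_{p} (-1)^p \, \mathrm{tr}\bigl(T^\# | \Omega^p\bigr) .
\end{equation}
This is the same linear-algebra fact that underlies the Euler-Poincar\'e proof given above (the alternating sum of ranks telescopes), now applied equivariantly; one can either cite the rank-nullity bookkeeping already used, or invoke that $T^\#$ respects the Hodge decomposition $\Omega^p = \mathrm{im}(d) \oplus \mathrm{im}(d^*) \oplus \mathrm{ker}(L_p)$ from the Hodge decomposition lemma, so its trace on $\mathrm{ker}(L_p)$ differs from its trace on $\Omega^p$ by contributions that cancel in the super trace.

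Next I would compute the right-hand side of the displayed equality combinatorially. The space $\Omega^p$ has a natural basis indexed by oriented $k$-tuples $(x_1,\dots,x_k)$ of pairwise intersecting simplices with $\sum_i \dim(x_i) = p$. A basis $k$-tuple contributes to $\mathrm{tr}(T^\# | \Omega^p)$ only if $T$ maps the tuple to itself, i.e.\ $T(x_j) = x_j$ for all $j$ (since $T$ is an automorphism it permutes the basis tuples, so off-diagonal permuted tuples contribute nothing, and a tuple sent to a different tuple contributes $0$). When $(x_1,\dots,x_k)$ is fixed, the induced action on the corresponding basis vector is multiplication by $\prod_j \mathrm{sign}(T|x_j)$, the sign of the permutation $T$ induces on the vertex set of $x_j$. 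Collecting signs: the $(-1)^p = \prod_j (-1)^{\dim(x_j)}$ factor combines with the diagonal entry $\prod_j \mathrm{sign}(T|x_j)$ to give exactly $i_T(x_1,\dots,x_k) = \prod_j (-1)^{\dim(x_j)} \mathrm{sign}(T|x_j)$. Hence
\begin{equation}
  \sum_{p} (-1)^p \, \mathrm{tr}\bigl(T^\# | \Omega^p\bigr) = \sum_{\substack{x_1 \sim \dots \sim x_k \\ T(x_j)=x_j}} i_T(x_1,\dots,x_k),
\end{equation}
which is the claimed formula (stated in the theorem for the abbreviated case $k=1$, $\chi_T(G) = \sum_{T(x)=x} i_T(x)$, and analogously for general $k$).

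The main obstacle I anticipate is the careful sign/orientation bookkeeping in the second step: one must check that $T^\#$ acting on an oriented basis $k$-tuple that $T$ fixes setwise (but possibly reorients) really does act by the scalar $\prod_j \mathrm{sign}(T|x_j)$, and that this is consistent with the chosen orientation conventions for the exterior derivative $d_k$ defined earlier via $dF(x_1,\dots,x_k) = \sum_j (-1)^{\sum_{i<j}\dim(x_i)} F(A_1,\dots,dA_j,\dots,A_k)$. In particular, for $k=1$ this reduces to the known statement in \cite{brouwergraph}, so the new content is purely the multiplicativity over the $k$ slots; since the $k$-form spaces are tensor-product-like in each slot and $d_k$ acts as a graded Leibniz derivation, the Hopf trace computation factors slot by slot and the signs multiply, but verifying there is no extra global sign requires attention. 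A secondary point worth stating explicitly is the reduction from endomorphisms to automorphisms (restricting to the attractor $\bigcap_n T^n(G)$), already noted in the text, which ensures the permutation-matrix picture on basis tuples is valid. Once these sign checks are in place, the proof is complete; the specialization $T = \mathrm{Id}$ recovers the Euler-Poincar\'e formula for $\omega_k$ since then every pairwise-intersecting $k$-tuple is fixed with index $\prod_j (-1)^{\dim(x_j)}$, whose total sum is $\omega_k(G)$.
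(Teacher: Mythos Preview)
Your proposal is correct and follows the classical Hopf trace route, which is a genuinely different argument from the paper's. The paper proves the theorem dynamically: it forms $W(t)=U\exp(-tL)$ with $U$ the Koopman operator $f\mapsto f(T)$, observes that $\mathrm{str}(W(t))$ is $t$-independent by McKean--Singer super symmetry (since $\mathrm{str}(L^n)=0$ for $n>0$), and then reads off the two sides as the $t=0$ and $t\to\infty$ limits, the latter identified via Hodge with the trace on cohomology. Your argument instead stays purely algebraic: the telescoping identity $\mathrm{tr}(T^\#|\Omega^p)=\mathrm{tr}(T^\#|H^p)+\mathrm{tr}(T^\#|B^p)+\mathrm{tr}(T^\#|B^{p+1})$ (which needs only that $T^\#$ is a chain map, a fact you should state follows from $T(\partial x)=\partial(Tx)$ for automorphisms) gives the equality of super traces directly, with no appeal to the Laplacian, Hodge decomposition, or heat kernel. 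Your approach is more elementary and portable---it works for any finite chain complex with an endomorphism, not just those arising from a self-adjoint $D$---while the paper's heat-flow proof ties the Lefschetz formula into the same spectral framework (super symmetry, Hodge) used elsewhere and makes the Euler--Poincar\'e specialization $T=\mathrm{Id}$ visibly the same deformation. Both reach the combinatorial side identically: only setwise-fixed tuples contribute to the diagonal, each with scalar $\prod_j \mathrm{sign}(T|x_j)$, and the grading sign supplies the $\prod_j(-1)^{\dim(x_j)}$.
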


The heat proof works as before.  For $t=0$ we get the right hand side, for $t \to \infty$
we get the left hand side.

\begin{proof}
Combine the unitary transformation $U: f \to f(T)$ 
with the heat flow $V(t) = \exp(-t L)$ to get the time-dependent operator $W(t) = U V(t)$
on the linear space of discrete differential forms.
For $t=0$, the super trace of $W_t$ is $\sum_{x \sim y} i_T(x,y)$.
McKean-Singer shows that the super trace of any positive power $L^n$ of $L$ is always zero. 
This is the {\bf super symmetry} meaning that
the union of the positive eigenvalues on the even dimensional part of the interaction Laplacian L is the same
than the union of the positive eigenvalues on the odd dimensional part of L.
For $t \to \infty$, the heat flow kills the eigenspace to the positive eigenvalues and pushes
the operator $W(t)$ towards a projection onto the kernel. The projection operator $P$ onto
harmonic interaction differential forms is the attractor.
Since by Hodge theory the kernel of the Laplacian L<sub>m</sub> on
$m$-interaction forms parametrized representatives of the m-th cohomology group, the transformation W(infinity)
is nothing else than the induced linear transformation on cohomology. Its super trace is by definition the
Lefschetz number.
The super symmetry relation ${\rm str}(L^n)$ for $n >0$ assures that the super trace of $\exp(-t L)$ 
does not depend on t. The heat flow (mentioned for example in \cite{DiscreteAtiyahSingerBott}) washes away the positive spectrum and leaves only the bare harmonic part responsible for the cohomology 
description. For $T=1$, the heat flow interpolates between the
combinatorial definition of the Wu characteristic and the cohomological definition of the
Wu characteristic, similarly as in the case of the Euler characteristic.
\end{proof}

\paragraph{}
Also, as in the case $k=1$, we can average the Lefschetz number 
over a subgroup $A$ of the automorphism group of $G$ to get $\omega_k(G/A)$, 
where $G/A$ is the quotient chain. If we look at the Barycentric refinement, we can 
achieve that the quotient chain is again a simplicial complex. 
These are generalizations of Riemann-Hurwitz type results.

\section{Barycentric invariance}

\paragraph{}
Almost by definition, the Wu characteristic $\omega(G)$ is multiplicative 
$$  \omega(G \times H) = \omega(G) \omega(H) \;   $$ 
because 
$$ \sum_{(x,y) \sim (u,v)} \omega(x) \omega(y) \omega(u) \omega(v) 
  = (\sum_{x \sim u} \omega(x) \omega(u) ) (\sum_{y \sim v} \omega(y) \omega(v) ) \; . $$
The same product formula holds for the other cases $\omega_k(G \times H) = \omega_K(G) \omega_K(H)$. 
This relation has used the Cartesian product $G \times H$ of simplicial complexes
which is not a simplicial complex any more. In \cite{KnillKuenneth} we have looked
at the product $(G \times H)_1$, the Barycentric refinement of $G \times H$ so that
we would remain in the class of simplicial complexes. But because calculus can be done equally
well on the ring itself, there is no need any more to insist on having
simplicial complexes. Indeed, in the strong ring, the simplicial complexes are the multiplicative
primes. 

\begin{propo}
All the Wu characteristics $\omega_k$ are ring homomorphisms from the strong ring to the integers. 
\end{propo}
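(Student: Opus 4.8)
The plan is to establish the two defining properties of a ring homomorphism separately: additivity with respect to the disjoint-union addition of the strong ring, and multiplicativity with respect to the Cartesian product, and then to note that these two facts, together with the normalization $\omega_k(\text{point})=1$, are exactly what is required. The additive part, $\omega_k(A \dot{\cup} B) = \omega_k(A) + \omega_k(B)$, should be extracted directly from the combinatorial definition $\omega_k(G) = \sum_{x_1 \sim \dots \sim x_k} \omega(x_1)\cdots\omega(x_k)$: if $A$ and $B$ are disjoint, then no simplex of $A$ intersects any simplex of $B$, so every pairwise-intersecting $k$-tuple lies entirely in $A$ or entirely in $B$, and the sum splits. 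One must check that this extends from disjoint unions of complexes to the signed elements of the strong ring, i.e. that $\omega_k$ is a well-defined valuation on the additive group generated by complexes; this is where one invokes that the $\omega_k$ are multilinear valuations in the sense of \cite{valuation}, so they extend linearly to chains and in particular to formal differences.

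For the multiplicative part I would reproduce the computation already displayed in the excerpt just before the Barycentric invariance section: expanding $\omega_k(G \times H)$ over $k$-tuples of pairs $((x_1,y_1),\dots,(x_k,y_k))$ and using that two pairs $(x_i,y_i)$ and $(x_j,y_j)$ intersect in the product complex exactly when $x_i \sim x_j$ in $G$ and $y_i \sim y_j$ in $H$, while $\omega((x,y)) = \omega(x)\omega(y)$. The sum then factors:
$$ \omega_k(G \times H) = \sum_{\substack{x_1 \sim \dots \sim x_k \\ y_1 \sim \dots \sim y_k}} \prod_{j=1}^k \omega(x_j)\omega(y_j) = \Big(\sum_{x_1 \sim \dots \sim x_k} \prod_j \omega(x_j)\Big)\Big(\sum_{y_1 \sim \dots \sim y_k} \prod_j \omega(y_j)\Big) = \omega_k(G)\,\omega_k(H). $$
Here it is essential that we work with the genuine Cartesian product $G \times H$ inside the strong ring rather than its Barycentric refinement $(G\times H)_1$; the excerpt has already emphasized that calculus, and hence these valuations, make sense on the ring directly, and that the simplicial complexes are the multiplicative primes there, so no passage to a refinement is needed.

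Finally I would assemble: $\omega_k$ sends the additive identity (empty complex) to $0$ and the multiplicative identity (the one-point complex) to $1$, is additive on the generators and hence on all of the strong ring by linearity, and is multiplicative on products of generators and hence on arbitrary products by distributing over sums. The main obstacle is not any single computation — each of the two displays above is essentially a bookkeeping identity — but rather the verification that everything is genuinely well defined on the \emph{strong ring}: one must confirm that the extension of $\omega_k$ from complexes to signed combinations is consistent (independent of how an element is written as a difference), which rests on $\omega_k$ being a valuation, and that the product in the strong ring restricted to complexes really is the Cartesian product $G \times H$ whose simplex set is the product of the simplex sets with the stated intersection rule. Once those structural points are granted from \cite{StrongRing} and \cite{valuation}, the homomorphism property follows.
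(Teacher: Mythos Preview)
Your proposal is correct and follows essentially the same approach as the paper: additivity is deduced from the fact that pairwise intersecting $k$-tuples in a disjoint union must lie entirely in one component (the paper phrases this as $\omega_k$ being a multilinear valuation with $\omega_k(0)=0$), and multiplicativity is obtained by the same factorization of the sum over $k$-tuples of pairs in $G\times H$, together with $\omega_k(1)=1$. Your treatment is in fact more careful than the paper's, which writes the factorization display without further comment and does not explicitly discuss well-definedness on signed combinations.
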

\begin{proof}
The additivity $\omega_k(G+H) = \omega_k(G + H)$ for the disjoint union is clear as all $\omega_k$
are multi-linear valuations and $\omega_k(0)=0$. 
The compatibility with multiplication follows from $\omega_k(1)=1$ for all $k$ and 
\begin{eqnarray*}
 \omega_k(G \times H) = 
   \sum_{(x_1, \dots, x_k) \sim (y_1, \dots, y_k)} \omega(x_1)\cdots \omega(x_k) \omega(y_1) \cdots \omega(y_k)  \\
   (\sum_{(x_1, \dots, x_k)} \omega(x_1)\cdots \omega(x_k)) (\sum_{(y_1, \dots, y_k)} \omega(y_1) \cdots \omega(y_k) ) \\
   = \omega_k(G) \omega_k(H) 
\end{eqnarray*} 
\end{proof} 

\paragraph{}
Given a simplicial complex $G$ defined over a set $V$ of ``vertices", zero dimensional sets in $G$, 
the complex $G_1$ is defined over the set $V_1=G$. We call a vertex $x$ in $G_1$ a {\bf $p$-vertex},
if it had been a $p$-dimensional simplex in $G$. 

\begin{thm}[Barycentric Invariance]
$H^p_k(G) = H^p_k(G_1)$.
\end{thm}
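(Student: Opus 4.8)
The plan is to mimic the classical proof that simplicial cohomology is invariant under barycentric subdivision, but carried out in the interaction setting, and to exploit the machinery already available in the paper: the Euler--Poincar\'e formula for $\omega_k$, the Hodge--Weyl theorem, and the fact that $\omega_k$ is a combinatorial invariant (so in particular $\omega_k(G)=\omega_k(G_1)$, which pins down the alternating sum of the $b_{k,p}$). The strategy is to construct explicit chain maps between the interaction cochain complexes $\Omega^\bullet_k(G)$ and $\Omega^\bullet_k(G_1)$ that induce mutually inverse isomorphisms on cohomology. First I would set up the notation: a $p$-form on level $k$ is an alternating function on $k$-tuples $(x_1,\dots,x_k)$ of pairwise intersecting simplices of $G$ with $\sum_i \dim(x_i)=p$, and the differential $d_k$ acts tuple-wise via the boundary $\delta$ as in the body of the paper. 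The same data is attached to $G_1$, whose simplices are \emph{flags} $x^{(0)}\subsetneq x^{(1)}\subsetneq\cdots$ of simplices of $G$.

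The core construction is a subdivision operator. On a single simplicial complex there is a standard subdivision chain map $\mathrm{sd}\colon C_\bullet(G)\to C_\bullet(G_1)$ (sending an oriented simplex to the signed sum of the top flags in its barycentric decomposition) and a simplicial approximation / "last vertex" map $\lambda\colon C_\bullet(G_1)\to C_\bullet(G)$ in the other direction, with $\lambda\circ\mathrm{sd}=\mathrm{id}$ on $C_\bullet(G)$ and $\mathrm{sd}\circ\lambda\simeq\mathrm{id}$ on $C_\bullet(G_1)$ via an explicit chain homotopy. The key point I would verify is that, because the interaction differential $d_k$ is built \emph{tuple-wise} out of $\delta$ with Koszul signs $(-1)^{\sum_{i<j}\dim(x_i)}$, the $k$-fold tensor-type maps $\mathrm{sd}^{\otimes k}$ and $\lambda^{\otimes k}$ respect the pairwise-intersection constraint: if $x_i\sim x_j$ in $G$ then every flag appearing in $\mathrm{sd}(x_i)$ meets every flag in $\mathrm{sd}(x_j)$ in $G_1$ (a flag in $x_i$ and a flag in $x_j$ share the common vertex of $G_1$ corresponding to the simplex $x_i$, resp.\ $x_j$, or to $x_i\cap x_j$); and conversely $\lambda$ of a pair of intersecting flags is a pair of intersecting simplices. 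Hence $\mathrm{sd}^{\otimes k}$ and $\lambda^{\otimes k}$ descend to chain maps on the interaction complexes, dualizing to cochain maps between $\Omega^\bullet_k(G_1)$ and $\Omega^\bullet_k(G)$. The identities $\lambda\mathrm{sd}=\mathrm{id}$ and the homotopy $\mathrm{sd}\lambda\simeq\mathrm{id}$ then tensor up (with appropriate signs) to the analogous statements for the $k$-fold complexes, giving $H^p_k(G)\cong H^p_k(G_1)$.

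The main obstacle I anticipate is precisely the bookkeeping of signs and of the intersection pattern under the tensor-type maps: one must check that the Koszul sign convention in $d_k$ is compatible with the tensor product of the chain homotopies, i.e.\ that the homotopy operator $H^{\otimes k}:=\sum_i \pm\,\mathrm{id}^{\otimes(i-1)}\otimes H\otimes(\mathrm{sd}\lambda)^{\otimes(k-i)}$ (the standard tensor-product homotopy) genuinely satisfies $d_k H^{\otimes k}+H^{\otimes k}d_k=\mathrm{id}-(\mathrm{sd}\lambda)^{\otimes k}$ \emph{after restricting to the intersecting-tuple subcomplex}; the subtlety is that $H$ applied to one factor may enlarge a simplex (it inserts barycenters), so one must confirm the intersection relation with the other factors is preserved at every intermediate stage. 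Once this is checked, the isomorphism on each $H^p_k$ follows formally. As a consistency check — and an alternative route that at least recovers Euler--Poincar\'e-level information — one notes that the alternating sums match automatically since $\sum_p(-1)^p b_{k,p}(G)=\omega_k(G)=\omega_k(G_1)=\sum_p(-1)^p b_{k,p}(G_1)$ by the combinatorial invariance of $\omega_k$; the content of the theorem is the stronger degree-by-degree equality, which is what the explicit chain maps deliver. The argument extends verbatim to the mixed case $H^p_k(G_1,\dots,G_k)$ by applying $\mathrm{sd}$ or $\lambda$ independently in each slot.
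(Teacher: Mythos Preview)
Your overall architecture (subdivision map, carrier map, tensor up, chain homotopy) is the classical one and is a reasonable thing to try, but the key step fails as written. The claim that ``if $x_i\sim x_j$ in $G$ then every flag appearing in $\mathrm{sd}(x_i)$ meets every flag in $\mathrm{sd}(x_j)$ in $G_1$'' is false. Take $x_i=\{1,2,3\}$ and $x_j=\{3,4,5\}$, which intersect in $G$. A top flag of $\mathrm{sd}(x_i)$ such as $\{1\}\subset\{1,2\}\subset\{1,2,3\}$ and a top flag of $\mathrm{sd}(x_j)$ such as $\{4\}\subset\{4,5\}\subset\{3,4,5\}$ share no vertex of $G_1$ at all: neither $x_i$, nor $x_j$, nor $x_i\cap x_j=\{3\}$ occurs in both chains. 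Hence $\mathrm{sd}^{\otimes k}$ does \emph{not} carry intersecting $k$-tuples to (sums of) intersecting $k$-tuples, and so it does not define a chain map on the interaction complex. Your parenthetical justification conflates ``has $x_i$ as top element'' with ``contains $x_i\cap x_j$ as a member of the flag,'' and the latter is what you would actually need. The same problem contaminates the tensor-product homotopy $H^{\otimes k}$: once one factor is subdivided you can lose the intersection with the other factors, exactly as you feared in your ``main obstacle'' paragraph. (By contrast, the $\lambda$ direction is fine: if two flags share a vertex $\tau$ of $G_1$, then $\lambda(\tau)$ is a common vertex of their images in $G$.)

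The paper sidesteps this by \emph{not} tensoring the classical subdivision operator. Instead it builds a single averaging transfer $\phi$ on interaction forms directly: given $F$ on $k$-tuples in $G$, one distributes $F(x_1,\dots,x_k)$ only over those tuples $(y_1,\dots,y_k)$ in $G_1$ with each $y_j$ a simplex of the same dimension as $x_j$ sitting inside $x_j$ \emph{and} with the $y_j$ pairwise intersecting; the inverse sums back up. One then checks that this commutes with $d$ and invokes Hodge. If you want to salvage your approach, you would need either to replace $\mathrm{sd}$ by a map that only emits flags passing through the intersection locus (so that intersection is preserved tuple-wise), or to work not with the naive tensor product but with a relative/diagonal variant of the subdivision that is adapted to the interaction subcomplex. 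As it stands, the gap is exactly at the point you flagged, and the fix is not just bookkeeping.
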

\begin{proof}
There is an explicit map $\phi$ from the space of forms on $G$ to the space of forms in $G_1$. 
Lets look at it first in the case $k=1$. \\
Given a $p$-simplex $(x_1, \dots, x_p)$ in $G_1$, there is maximally one of the vertices where $F$ 
is non-zero. Define $F(x_1, \dots, x_p) = F(x_k)/p!$ to be this value. 
On the other hand, given a form $F$ on $G_1$, we get a form on $G$. Given a simplex $x$ in $G$
it contains $p!$ simplices of the same dimension in $G_1$. Now just sum up the $F$ values
over these simplices. As $\phi$ and its inverse commute with the exterior derivative $d$ in 
the sense $d(G_1) \phi = \phi d(G)$, we have also $L(G_1) \phi = \phi L(G)$. 
Harmonic forms are mapped into Harmonic forms.  \\
More generally, the Barycentric invariance works for $H^p(G_1, \dots, G_k)$, which is isomorphic to 
$H^p((G_1)_1, \dots, (G_k)_1)$, where $(G_j)_1$ is the Barycentric refinement of 
$G_j$. The map $\phi$ from forms in $\Omega(G_1, \dots, G_k)$ to forms in 
$\Omega( (G_1)_1, \dots, (G_k)_1)$ just distributes the value $F(x_1, \dots, x_k)$ on 
pairwise intersecting simplices $x_1, \dots, x_k$ equally to all $k$-tuples of 
pairwise intersecting simplices $y_1, \dots, y_k)$ with $y_j$ being simplices of the same dimension
than $x_j$ within $x_j$. The inverse of $\phi$ sums all values up. These averaging procedures
commute with $d$. 
\end{proof}

\paragraph{}
We don't know yet whether there is a more robust statement. We would 
think for example that if $H_1,H_2$ are two complexes, where one is a Barycentric 
refinement of the other and both are embedded in the same way in a larger graph $G$, 
then $H^p(G,H_1)=H^p(G,H_2)$. Experiments indicated that this is true. We also have
hopes that if $H_1,H_2$ are two different knots embedded into a three dimensional discrete
sphere $G$, then we could distinguish the embedding by computing $H^p(G,H_1)$ and 
$H^p(G,H_2)$. We also need to look at the 
cohomology of the complement $H^p(G,G \setminus H_1)$ and $H^p(G,G \setminus H_2)$. 
Classically, one has used simplicial cohomology $H^p(G \setminus H_1), H^p(G \setminus H_2)$ 
already. 

\section{Kuenneth formula}

\paragraph{}
The Barycentric refinement invariance shows that that the cohomology 
of $(G \times H)_1$ is the same than the cohomology of $G \times H$. 
So, we could for $k=1$ refer to \cite{KnillKuenneth}  to get 

\begin{thm}[Kuenneth]
$H_k^p(G \times H) = \oplus_{i+j=p} H_k^i(G) \oplus H_k^j(H)$.
\end{thm}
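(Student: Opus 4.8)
The plan is to reduce the Künneth statement for interaction cohomology to the purely algebraic Künneth theorem for cochain complexes over a field, applied to the tensor product of the interaction cochain complexes. First I would set up the relevant cochain complexes: let $(\Omega_k^\bullet(G), d_k)$ denote the interaction cochain complex of $G$ at level $k$, graded by $p = \sum_i \dim(x_i)$, and similarly for $H$. The key structural observation is that the interaction cochain complex of the Cartesian product $G \times H$ at level $k$ is naturally isomorphic, as a graded complex, to the tensor product $\Omega_k^\bullet(G) \otimes \Omega_k^\bullet(H)$ with the usual total grading and the Leibniz differential $d_k \otimes 1 + (-1)^{\deg} \, 1 \otimes d_k$. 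Indeed, a pairwise-intersecting $k$-tuple of simplices in $G \times H$ is exactly a pair consisting of a pairwise-intersecting $k$-tuple in $G$ and a pairwise-intersecting $k$-tuple in $H$ (this is the same bookkeeping already used in the product formula $\omega_k(G \times H) = \omega_k(G)\omega_k(H)$ in the Kuenneth proposition), and the sign conventions in the definition of $d_k$ are precisely those of the tensor-product differential. Once this identification is established, the algebraic Künneth theorem over a field (where the $\mathrm{Tor}$ terms vanish) gives $H^p_k(G \times H) \cong \bigoplus_{i+j=p} H^i_k(G) \otimes H^j_k(H)$.

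The steps, in order, are: (1) recall the definition of $\Omega_k^\bullet$ and $d_k$ from the section on interaction cohomology, and recall from the Barycentric Invariance theorem that $H^p_k((G\times H)_1) = H^p_k(G \times H)$, so it suffices to work with the (non-simplicial) Cartesian product directly; (2) construct the bijection between basis elements of $\Omega_k^\bullet(G \times H)$ and basis elements of $\Omega_k^\bullet(G) \otimes \Omega_k^\bullet(H)$, checking it respects the grading $p = \sum \dim$; (3) verify that under this bijection $d_k$ on $G \times H$ corresponds to the tensor differential $d_k \otimes 1 \pm 1 \otimes d_k$ — this amounts to matching the sign $(-1)^{\sum_{i<j}\dim(x_i)}$ in the definition of $dF$ against the Koszul sign in the tensor product, splitting the boundary of a product simplex in $G \times H$ into its $G$-part and $H$-part; (4) invoke the algebraic Künneth theorem for bounded complexes of finite-dimensional vector spaces over the coefficient field, for which all $\mathrm{Tor}$ corrections vanish, to conclude. (For $k=1$ one can alternatively just cite \cite{KnillKuenneth} as the excerpt notes.)

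I expect step (3) — the compatibility of differentials including signs — to be the main obstacle, though it is more a careful bookkeeping exercise than a genuine difficulty. The subtlety is that the interaction exterior derivative $dF(x_1,\dots,x_k) = \sum_j (-1)^{\sum_{i<j}\dim(x_i)} F(x_1,\dots,dx_j,\dots,x_k)$ already carries internal Koszul signs among the $k$ slots, and when we pass to $G \times H$ each slot $x_j$ becomes a pair, so one must check that the ambient signs reorganize correctly into "$d$ on the $G$-tuple tensor identity on the $H$-tuple, plus $\pm$ identity tensor $d$ on the $H$-tuple" with the sign $(-1)^p$ where $p$ is the degree of the $G$-factor. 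A clean way to handle this is to observe that both sides satisfy the same graded Leibniz rule over the obvious product structure and agree on generators, so they agree. A secondary point worth a sentence is that the statement as displayed, $H_k^p(G \times H) = \oplus_{i+j=p} H_k^i(G) \oplus H_k^j(H)$, should be read with $\otimes$ rather than $\oplus$ inside the sum (the outer $\oplus$ over $i+j=p$ is correct); over a field $H^i \otimes H^j$ has dimension $b_i b_j$, which is exactly what makes the Poincaré polynomial $p_k(t)$ multiplicative as claimed in the abstract, and this consistency check confirms the intended reading.
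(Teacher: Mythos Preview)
Your proposal is correct and rests on the same structural identification the paper uses: the interaction complex of $G \times H$ is the tensor product of the interaction complexes of $G$ and $H$, with differential $d_1 + (-1)^{\deg} d_2$. The paper even writes out this splitting explicitly in the proof of the subsequent Corollary. Where you diverge is in how you extract the cohomology from this identification. You invoke the algebraic K\"unneth theorem for bounded complexes of finite-dimensional vector spaces over a field (no $\mathrm{Tor}$ terms), which is clean and textbook. The paper instead goes through Hodge theory: it identifies $H^p_k$ with the kernel of the form Laplacian, then argues directly that harmonic $p$-forms on $G \times H$ are exactly tensor products $v \otimes w$ of harmonic forms on the factors with $\deg(v)+\deg(w)=p$, using the orthogonal decomposition $\Omega = {\rm im}(d_i) \oplus {\rm im}(d_i^*) \oplus {\rm ker}(L_i)$ to kill the cross terms $d_1^* d_2 + d_1 d_2^* + \cdots$ in the product Laplacian.

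Both routes buy the same result; yours is more portable (it would work over any field and makes the ring-homomorphism property of the Poincar\'e polynomial immediate), while the paper's Hodge argument is more self-contained within the framework already built in the Hodge relations section and gives explicit harmonic representatives rather than abstract cohomology classes. Your remark that the inner $\oplus$ in the displayed statement should be read as $\otimes$ is also correct and worth keeping.
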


\paragraph{}
The proof follows from Hodge and the proposition stated below. 
Instead of generalizing \cite{KnillKuenneth} from $k=1$ to larger $k$, 
it is better to prove the relation directly first for the Cartesian 
product (which is not a simplicial complex), and then invoke the Barycentric refinement, to recover \cite{KnillKuenneth}. 
Now this is just Hodge. We have to show that for every $k$, every harmonic $p$-form $f$
can be written as a pair $(g,h)$, where $g$ is an $i$-form and $h$ is a $j$ form. 

\paragraph{}
K\"unneth could not be more direct: 

\begin{propo} 
A basis for $H^p(G,H)$ is given by $L$-harmonic $p$-forms $v$. A basis for $L(G,H)$-Harmonic $p$-forms
is given by all the vectors $v \oplus w$, where $v$ is $L(G)$-harmonic $k$-form and 
$w$ is $ L(H)$-harmonic $m$-form so that $k+m=p$. 
\end{propo}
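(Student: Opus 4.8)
The plan is to put everything on a tensor product and then quote the Hodge--Weyl theorem already established. The first sentence of the proposition is literally that theorem, so all the content is in the second: identifying the harmonic $p$-forms of the Cartesian product with $\bigoplus_{a+b=p}(\text{harmonic }a\text{-forms of }G)\otimes(\text{harmonic }b\text{-forms of }H)$. Throughout, $L(G,H)$ is read as the Hodge Laplacian $L(G\times H)$ of the Cartesian product, and the proposition's ``$v\oplus w$'' is a simple tensor $v\otimes w$ under the identification below.

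First I would record the tensor description of the complex. A $p$-form of the $k$-interaction complex of $G\times H$ is a multilinear form on ordered $k$-tuples $((x_1,y_1),\dots,(x_k,y_k))$ of pairwise intersecting cells of $G\times H$ with total degree $\sum_i(\dim x_i+\dim y_i)=p$. The one combinatorial fact needed is that $(x_i,y_i)\sim(x_j,y_j)$ in $G\times H$ precisely when $x_i\sim x_j$ in $G$ and $y_i\sim y_j$ in $H$; hence a pairwise intersecting $k$-tuple of product cells is exactly a pair consisting of a pairwise intersecting $k$-tuple in $G$ and one in $H$. Splitting the degree as $p=a+b$ along the two factors yields the graded identification $\Omega^p_k(G\times H)=\bigoplus_{a+b=p}\Omega^a_k(G)\otimes\Omega^b_k(H)$.

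Next I would check that the exterior derivative is the Leibniz derivative $d_{G\times H}=d_G\otimes I+\Gamma_G\otimes d_H$, where $\Gamma_G$ acts on the summand of $G$-degree $a$ by $(-1)^a$ — exactly the Koszul sign already present in the defining formula $dF(x,y)=F(dx,y)+(-1)^{\dim x}F(x,dy)$. With this convention $D_G\otimes I$ and $\Gamma_G\otimes D_H$ anticommute (the Dirac operator is odd for the grading), so that $L(G\times H)=D_{G\times H}^2=L(G)\otimes I+I\otimes L(H)$, the cross terms cancelling and $\Gamma_G^2=I$. Now $A=L(G)\otimes I$ and $B=I\otimes L(H)$ are commuting, positive semidefinite, and simultaneously diagonalizable: choosing orthonormal eigenbases $(v_i)$ of $L(G)$ and $(w_j)$ of $L(H)$, the vectors $v_i\otimes w_j$ diagonalize $L(G\times H)$ with eigenvalue $\lambda_i+\mu_j\ge0$, which vanishes iff $\lambda_i=\mu_j=0$. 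Hence $\ker L(G\times H)=\ker L(G)\otimes\ker L(H)$; intersecting with the degree-$p$ summand gives the asserted spanning set, and orthonormality of the factor bases makes it an orthonormal basis. Feeding this into Hodge--Weyl on both sides promotes it to the isomorphism $H^p_k(G\times H)\cong\bigoplus_{a+b=p}H^a_k(G)\otimes H^b_k(H)$ of the theorem, and the Barycentric invariance $H^p_k(G)=H^p_k(G_1)$ transports the result from $G\times H$ to the simplicial complex $(G\times H)_1$, recovering \cite{KnillKuenneth}.

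The main obstacle is bookkeeping, not ideas: getting the Koszul/grading signs right so that the cross terms in $D_{G\times H}^2$ genuinely cancel (a sign slip in the boundary operator on product cells stays invisible until one notices $L$ is no longer block diagonal), and, for $k>1$, checking that the factorization of the pairwise-intersection relation on product cells identifies $k$-tuples on $G\times H$ with pairs of $k$-tuples with no double counting and with matching orientations. Once $L(G\times H)=L(G)\otimes I+I\otimes L(H)$ is in place, the rest is the elementary spectral remark about kernels of sums of commuting positive operators.
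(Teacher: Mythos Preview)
Your argument is correct and follows essentially the same route as the paper: identify the product complex as a tensor product, compute the product Laplacian, and read off that its kernel is the tensor product of the kernels. The paper (in the proof of the corollary immediately following the proposition) writes out $d^*d+dd^*$ directly, obtains $L_1+L_2$ plus cross terms $(-1)^{\dim x}(d_1^*d_2+d_1d_2^*+d_2^*d_1+d_2d_1^*)$, and then invokes the Hodge decomposition in each factor to pass to a basis in which those cross terms disappear. Your version is cleaner: by packaging the sign as the grading involution $\Gamma_G$ and observing that $D_G\otimes I$ and $\Gamma_G\otimes D_H$ anticommute, you get the operator identity $L(G\times H)=L(G)\otimes I+I\otimes L(H)$ outright, so no auxiliary basis change is needed and the kernel factorization follows from the elementary remark that a sum of commuting positive semidefinite operators has kernel equal to the intersection of the kernels. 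Both arguments prove the same thing; yours just makes the cancellation structural rather than basis-dependent.
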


\paragraph{}
In \cite{KnillKuenneth} the construction of a chain homotopy was more convoluted as we dealt with
the Barycentric refinements (as at time, we felt we needed to work with simplicial complexes). 
Separating the two things and establishing the Barycentric invariance separately is more elegant. 

\paragraph{}
For a given $k$ and ring element $G$, define the {\bf Poincar\'e polynomial}
$$ p_{G,k}(t)=\sum_{l=0} b_{k,l}(G) t^l \; . $$
If $k=1$, then $p_G(t)=p_{G,1}(t)$ is the usual Poincar\'e polynomial. 

\begin{coro}
The map $G \to p_{G,k}(t)$ is a ring homomorphism from the strong connection ring to $\Z[t]$.
\end{coro}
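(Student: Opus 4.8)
The plan is to deduce the ring-homomorphism property of $G \mapsto p_{G,k}(t)$ from the two structural facts already in hand: additivity under disjoint union and the Künneth formula for the Cartesian product. First I would check that $p_{G,k}(t)$ respects the additive structure. The strong ring is generated additively by simplicial complexes (with signs giving an additive group), so it suffices to treat disjoint unions $G \dot\cup H$. On the level of chains, the interaction cochain complex of $G \dot\cup H$ splits as a direct sum of the complexes for $G$ and $H$ (pairwise intersecting $k$-tuples of simplices cannot straddle two disjoint components), hence $H^p_k(G \dot\cup H) = H^p_k(G) \oplus H^p_k(H)$ and $b_{k,l}(G \dot\cup H) = b_{k,l}(G) + b_{k,l}(H)$. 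Summing against $t^l$ gives $p_{G \dot\cup H,k}(t) = p_{G,k}(t) + p_{H,k}(t)$, and the sign/empty conventions $p_{0,k}(t)=0$ extend this to the full additive group.

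Next I would handle multiplication. By the Künneth theorem above, $H^p_k(G\times H) = \bigoplus_{i+j=p} H^i_k(G)\otimes H^j_k(H)$ (the ``$\oplus$'' in the theorem statement being the external tensor product as clarified in the accompanying Proposition on harmonic forms $v\oplus w$). Taking dimensions gives the convolution $b_{k,p}(G\times H) = \sum_{i+j=p} b_{k,i}(G)\, b_{k,j}(H)$, which is exactly the coefficient identity saying $p_{G\times H,k}(t) = p_{G,k}(t)\, p_{H,k}(t)$. Since the simplicial complexes are the multiplicative primes of the strong ring and the cohomology (hence the Poincaré polynomial) of a general ring element is determined by its factorization together with additivity, the identity propagates to all of the strong ring. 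Finally, $p_{1,k}(t) = 1$ because the one-point complex $K_1$ has $b_{k,0}=1$ and all higher $b_{k,l}=0$ (visible from the table: every $\omega_j$ row for $K_1$ is $(1)$), so the multiplicative unit is preserved and the map is a genuine ring homomorphism into $\Z[t]$.

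The main obstacle is not any single computation but making sure the Künneth identity is applied in the tensor-product form rather than the literal direct-sum form printed in the theorem: one must read off from the Proposition that the harmonic $p$-forms of $L(G\times H)$ are spanned by $v\oplus w$ with $v$ harmonic of degree $i$ on $G$ and $w$ harmonic of degree $j$ on $H$, $i+j=p$, so the count is a product of counts graded by degree, i.e. a Cauchy product of the two Poincaré polynomials. A secondary point to verify carefully is that ``the strong ring is generated by simplicial complexes under $\dot\cup$ and $\times$ with the complexes as primes'' — this is asserted in the introduction and in the cited \cite{StrongRing} — so that checking the homomorphism property on complexes and on the generators of the additive group suffices; no separate argument is needed for arbitrary ring elements beyond additivity plus multiplicativity on primes.
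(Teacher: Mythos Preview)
Your proposal is correct and follows essentially the same route as the paper: multiplicativity of $p_{G,k}(t)$ is deduced from the K\"unneth decomposition of harmonic forms (the Proposition), which translates into the Cauchy-product identity $b_{k,p}(G\times H)=\sum_{i+j=p} b_{k,i}(G)\,b_{k,j}(H)$. The only notable difference is one of packaging: the paper's proof of the Corollary actually contains the Hodge computation (expanding $d=d_1+(-1)^{\dim(x)}d_2$ and showing the product Laplacian decouples into $H_1\oplus H_2$ on a suitable basis) that establishes the Proposition, whereas you cite K\"unneth and the Proposition as already given and instead spell out the additivity under disjoint union and the unit $p_{1,k}(t)=1$, which the paper leaves implicit. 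Your observation that the ``$\oplus$'' in the K\"unneth statement must be read as the tensor product (so that dimensions multiply, not add) is exactly right and is the content of the Proposition on harmonic forms $v\oplus w$.
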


\begin{proof} 
If $d_i$ are the exterior derivatives on $G_i$, we can write them as partial exterior
derivatives on the product space. We get from
$d f(x,y) = d_1 f(x,y) + (-1)^{{\rm dim}(x)} d_2(f(x,y)$
$$ d^* d f = d_1^* d_1 f + (-1)^{{\rm dim}(x)} d_1^* d_2 f 
           + (-1)^{{\rm dim}} d_2^* d_1 f + d_2^* d_2 f \; ,  $$
$$ d d^* f = d_1 d_1^* f + (-1)^{{\rm dim}(x)} d_1 d_2^* f 
           + (-1)^{{\rm dim}(x)} d_2 d_1^* f + d_2 d_2^* f \; . $$
Therefore $H f = H_1 f + H_2 f + (-1)^{{\rm dim}(x)} (d_1^*d_2 + d_1 d_2^* + d_2^* d_1 + d_2 d_1^*) f(x,y) )$.
Since Hodge gives an orthogonal decomposition
$$  {\rm im}(d_i),{\rm im}(d_i^*), {\rm ker}(H_i) = {\rm ker}(d_i) \cap {\rm ker}(d_i^*) \; , $$
there is a basis in which $H(v,w) = (H(G_1)(v), H(G_2)(w))$.
Every kernel element can be written as $(v,w)$, where $v$ is in the kernel of $H_1$ and $w$ 
is in the kernel of $H_2$.
\end{proof} 

\paragraph{}
In the special case $k=1$, the Kuenneth formula would follow also from \cite{KnillKuenneth}
because the product is $(G \times H)_1$ and the cohomology of the Barycentric refinement is the same.
It follows that the {\bf Euler-Poincar\'e formula} holds in general for elements $G$ in the
ring: the cohomological Euler characteristic $\sum_{k=0}^{\infty} b_k(G) (-1)^k$ is equal to
the combinatorial Euler characteristic $\sum_{k=0}^{\infty} v_k(G) (-1)^k$, where
$(v_0,v_1, \dots)$ is the $f$-vector of $G$. 
Note that the Cartesian product $G \times H$
of two signed simplicial complexes is still a signed discrete CW complex so that one can 
define $v_k(G \times H)$ as the number of $k$-dimensional cells in $G \times H$.

\section{Euler polynomial}

\paragraph{}
Every ring element $G$ in the strong ring has a {\bf $f$-vector} 
$(v_0,v_1,v_2, \dots, v_r)$. 
Unlike for a simplicial complex, the entries are now general integers and not non-negative
integers. This defines the {\bf Euler polynomial}
$$  e_{G}(t) = \sum_{l=0} v_l(G) t^l  \; .  $$
The Euler-Poincar\'e relations can be 
stated as $e_G(-1)=p_G(-1)$. This is a relation which holds for any ring element. 

\paragraph{}
The Poincar\'e polynomial relates with the Euler polynomial, which also has a nice
algebraic description:

\begin{propo}[Euler polynomial]
The map $G \to e_{G}(t)$ is a ring homomorphism from the strong connection ring to $\Z[t]$.
\end{propo}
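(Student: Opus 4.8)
The plan is to verify the two defining properties of a ring homomorphism separately — additivity under the disjoint-union sum and multiplicativity under the Cartesian product — together with the normalization $e_1(t)=1$, and then to extend from simplicial complexes to arbitrary strong-ring elements by bilinearity.

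First, additivity. A cell of the disjoint union $G \dot{\cup} H$ is either a cell of $G$ or a cell of $H$, with its dimension unchanged, so $v_l(G \dot{\cup} H) = v_l(G) + v_l(H)$ for every $l$, hence $e_{G \dot{\cup} H}(t) = e_G(t) + e_H(t)$. The empty complex $0$ has no cells, so $e_0(t)=0$, and the sign convention of the strong ring (where $-G$ carries the $f$-vector $(-v_0,-v_1,\dots)$) gives $e_{-G}(t) = -e_G(t)$. Thus $G \mapsto e_G(t)$ is a homomorphism of additive groups into $\Z[t]$.

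Second, multiplicativity. The cells of the Cartesian product $G \times H$ are exactly the pairs $(x,y)$ with $x$ a cell of $G$ and $y$ a cell of $H$, and in the product CW structure $\dim(x,y) = \dim(x) + \dim(y)$. Counting cells of dimension $p$ gives the convolution $v_p(G \times H) = \sum_{i+j=p} v_i(G)\, v_j(H)$, which is precisely the statement that the generating polynomials multiply: $e_{G \times H}(t) = e_G(t)\, e_H(t)$. The multiplicative unit is the point $K_1$ with $f$-vector $(1)$, so $e_{K_1}(t)=1$. Since every element of the strong ring is a finite signed combination of simplicial complexes and both $v_\bullet$ and multiplication in $\Z[t]$ are bilinear, the identity extends from complexes to all ring elements, completing the proof.

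The only point requiring genuine care is the bookkeeping for the Cartesian product: one must know that $G \times H$ is a bona fide signed discrete CW complex (as already noted in the excerpt after the Künneth discussion), so that "number of $p$-dimensional cells" is well defined, and that the assignment $(x,y)\mapsto x\times y$ is a bijection between cell pairs and cells of the product respecting $\dim(x\times y)=\dim x + \dim y$. Once that product structure is in hand the convolution identity, and with it the ring-homomorphism property, is immediate. The argument is logically parallel to the preceding proposition for the $\omega_k$ and to the corollary for the Poincar\'e polynomial $p_{G,k}$; the Euler--Poincar\'e relation $e_G(-1)=p_G(-1)$ then records the compatibility of these homomorphisms.
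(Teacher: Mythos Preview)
Your proof is correct and follows essentially the same approach as the paper: for the disjoint union the $f$-vectors add, and for the Cartesian product the dimension identity $\dim(x\times y)=\dim(x)+\dim(y)$ yields the convolution of $f$-vectors, i.e.\ multiplication of the generating polynomials. You have simply made explicit the convolution identity, the unit, and the extension to signed combinations that the paper's very terse proof leaves implicit.
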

\begin{proof}
When doing a disjoint union, the cardinalities add. For the Cartesian product, 
we take the set theoretical Cartesian product $G \times H$ of sets of sets. 
Now, if $x$ has dimension $p$ and $y$ has dimension $q$, then $x \times y$ has dimension $p+q$. 
\end{proof} 

\paragraph{}
There are higher dimensional versions of the Euler polynomial. Let $V=V_{k,l}$ denote the
{\bf $f$-vector} of $G$, where $V_{k,l}$ are the number of pairs $(x,y)$ of simplices 
which intersect and ${\rm dim}(x)=k$ and ${\rm dim}(y)=l$. We can now define multi-variate
Euler polynomials like the quadratic Euler polynomial 
$$  e_{G,2}(s,t) = \sum_{i,j} V_{i,j} s^i t^j  \; , $$
which is a generating function for the $f$-matrix. The definition of the Wu characteristic 
gives $\omega_2(G) = e_{G,2}(-1,-1)$. 

\paragraph{}
Examples. \\
1) For $G=K_2$, we have $e_G(t) = 2+t$ and $e_{G,2}(t,s) = 2+2t+2s+ts$
which leads to $\chi(G)=e_G(-1)=1$ and $\omega(G) = e_{G,2}(-1,-1) = -1$. \\
2) For $G=K_3$, we have $e_G(t) = 3+3t+t^2$ and 
$e_{G,2}(t,s) = s^2 t^2+3 s^2 t+3 s^2+3 s t^2+9 s t+6 s+3 t^2+6 t+3$
leading to $\omega(G) = e_{G,3}(-1,-1) = 1$. \\

\paragraph{}
Similarly, one can define {\bf $f$-tensors} for any $k$. They encode the 
cardinalities of interacting $k$-tuples of simplices.
The {\bf multivariate Euler polynomial} is then 
$$  e_{G,k}(t_1, \dots, t_k) = \sum_{j_1,j_2, \dots, j_k} V_{j_1,\dots,j_k} 
     t_1^{j_1} \cdots t_k^{j_k} \; . $$
For example, for $G=K_2$, we have $e_G(t_1,t_2,t_3)= 
2 + 2 t_1 + 2 t_2 + 2 t_3 + 2 t_1 t_2 + 2 t_1 t_3 + 2 t_2 t_3 + t_1 t_2 t_3$
which gives $\omega_3(G) = e_G(-1,-1,-1) = 1$. 

\begin{propo}[Euler polynomial]
The map $G \to e_{G,k}(t_1, \dots, t_k)$ is a ring homomorphism from the strong ring to 
$\Z[t_1, \dots, t_k]$.
\end{propo}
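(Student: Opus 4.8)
The plan is to mimic the proof of the quadratic Euler polynomial proposition and simply upgrade the bookkeeping from two indices to $k$ indices. First I would recall what must be checked for a map $G \mapsto e_{G,k}(t_1,\dots,t_k)$ to be a ring homomorphism from the strong ring to $\Z[t_1,\dots,t_k]$: compatibility with the additive structure (disjoint union, together with the sign/negation that makes the strong ring a group) and compatibility with the multiplicative structure (the Cartesian product $G\times H$ of sets of sets), plus the fact that the multiplicative unit $1$ (the one-point complex, or more precisely the empty-product element) maps to the polynomial $1$.

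For additivity, I would note that the $f$-tensor entry $V_{j_1,\dots,j_k}(G)$ counts pairwise-intersecting $k$-tuples $(x_1,\dots,x_k)$ with $\dim(x_i)=j_i$ and $x_i\in G$. If $G = A \,\dot\cup\, B$ is a disjoint union, any such tuple lies entirely in $A$ or entirely in $B$, since a simplex from $A$ and a simplex from $B$ never intersect; hence $V_{j_1,\dots,j_k}(A\,\dot\cup\,B) = V_{j_1,\dots,j_k}(A) + V_{j_1,\dots,j_k}(B)$ coefficientwise, so $e_{G,k}$ is additive, and it extends to the additive group of the strong ring by linearity (signed complexes contribute signed $f$-tensors), with $e_{0,k}=0$.

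For multiplicativity, the key computation is exactly the one already used for $\omega_k$ in the Proposition on Wu characteristics being ring homomorphisms: a $k$-tuple of pairwise-intersecting simplices in the Cartesian product $G\times H$ is the same as a pair consisting of a $k$-tuple of pairwise-intersecting simplices in $G$ and a $k$-tuple of pairwise-intersecting simplices in $H$, because $(x_i,u_i)\sim(x_j,u_j)$ in $G\times H$ iff $x_i\sim x_j$ and $u_i\sim u_j$. Moreover $\dim(x_i\times u_i)=\dim(x_i)+\dim(u_i)$. Tracking the monomial weights, a tuple in $G$ with dimension vector $(a_1,\dots,a_k)$ and a tuple in $H$ with dimension vector $(b_1,\dots,b_k)$ produce a tuple in $G\times H$ contributing the monomial $\prod_i t_i^{a_i+b_i} = \bigl(\prod_i t_i^{a_i}\bigr)\bigl(\prod_i t_i^{b_i}\bigr)$. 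Summing over all tuples factors the generating function: $e_{G\times H,k}(t_1,\dots,t_k) = e_{G,k}(t_1,\dots,t_k)\, e_{H,k}(t_1,\dots,t_k)$, exactly the Cauchy-product identity defining polynomial multiplication. Finally $e_{1,k}=1$ since the one-element complex has a single $0$-dimensional simplex, and the only pairwise-intersecting $k$-tuple is the constant one with all dimensions $0$.

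I expect the main (and only genuine) obstacle to be the combinatorial identification "$k$-tuples of pairwise intersecting simplices in $G\times H$ $\leftrightarrow$ pairs of such tuples in $G$ and $H$" together with the additivity of dimension across the product; but both of these are already established in the excerpt for the Wu characteristics, so the argument is a direct transcription and there is no real difficulty beyond notational care with the $k$ indices. I would present the proof as: additivity by the disjointness observation, multiplicativity by the factorization of the generating sum using $\dim(x\times u)=\dim(x)+\dim(u)$ and the product-of-interaction-graphs fact, and normalization $e_{1,k}=1$; the special case $k=2$ recovers the previous Proposition, and $t_i\mapsto -1$ recovers that $\omega_k$ is a ring homomorphism.
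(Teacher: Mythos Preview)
Your proposal is correct and follows essentially the same approach as the paper: the additive part is dispatched by the observation that pairwise-intersecting tuples in a disjoint union lie entirely in one summand, and the multiplicative part by the bijection between pairwise-intersecting $k$-tuples in $G\times H$ and pairs of such tuples in $G$ and in $H$, together with additivity of dimension under the Cartesian product. You are in fact slightly more careful than the paper in spelling out the additivity and the normalization $e_{1,k}=1$.
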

\begin{proof}
Again, the additive part is clear. For the multiplication,
if $(x_1 \times \cdots x_k)$ intersects $(y_1 \times \cdots y_k)$, this happens
if each $x_j$ intersects each $y_j$. The intersection of simplices $x_1,x_2, \dots x_k$
of dimension $r_1, \dots, r_k$ is represented algebraically by 
$t_1^{r_1} t_2^{r_2} \cdots t_k^{r_k}$. 
The intersection of simplices $y_1,y_2, \dots y_k$
of dimension $s_1, \dots, s_k$ is represented algebraically by 
$t_1^{s_1} t_2^{s_2} \cdots t_k^{s_k}$. If we take the product, we get
$t_1^{r_1+s_1} t_2^{r_2+s_2} \cdots t_k^{r_k+s_k}$ which represents the 
intersection of simplices $(x_1 \times y_1), (x_2 \times y_2), \dots (x_k \times y_k)$ 
of dimension $r_1+s_1,\dots, r_k+s_k$. 
\end{proof}

\paragraph{}
Having a multivariate ring homomorphism for the Euler polynomial suggests going back 
to the cohomology and redefining the cohomology so that the Poincar\'e polynomial 
becomes a multivariate polynomial too. This is indeed possible because the 
exterior derivative $d=d_1 + \cdots + d_k$ is a sum of {\bf partial exterior derivatives}
which each satisfy $d_j^2=0$. So, we get a {\bf cohomology Betti tensor} $b$ which is
in the case $k=2$ a {\bf cohomology Betti matrix}. One could define htne a
{\bf multivariate Poincar\'e polynomial} $p_{G,k}(t_1, \dots, t_k)$ and
the map $G \to p_{G,k}(t_1, \dots, t_k)$ is a ring homomorphism from the strong ring 
to $\Z[t_1, \dots, t_k]$. One can get the Poincar\'e polynomial (as we have 
defined it) by setting $p_{G,k}(t,t, \dots, t)$. This is how we have initially defined
the cohomology (see the appendix). Euler-Poincare $p_{G,k}(-1,-1,\dots,-1)=e_{G,k}(-1,-1,\dots,-1)$
still holds, but there is a problem: the Betti matrix is not invariant under
Barycentric refinements, similarly as the $f$-matrix is not invariant under Barycentric 
refinement. A sensible cohomology should be invariant under Barycentric refinement.

\section{Miscellaneous}

\paragraph{}
Any exterior derivative can be deformed via a Lax deformation 
\cite{IsospectralDirac,IsospectralDirac2}. There is a version which keeps
the exterior derivative $d$ real
$$ \dot{D} = [B(D),D],   B(t)=d(t)-d^*(t),  D(t)=d(t)+d(t)^* + b(t) \; .   $$
Then there is a version which allows the operators to become complex:
$$ \dot{D} = [B(D),D],   B(t)=d(t)-d^*(t)-ib(t),  D(t)=d(t)+d(t)^* + b(t)  \; . $$
This works in the same way also for the exterior derivative $d$ of 
interaction cohomology. In the first case, we have a scattering situation, in the 
second we get asymptotically to a linear wave equation. 

\paragraph{}
Having deformations in the division algebras $\R$ and $\C$, one can ask whether
it is also possible in the third and last associative division algebra, the quaternions 
$\HH$.  Given a real exterior derivative we can form $D=d+d^*$
and deform it with $D'=[B,D]$, where $B=d-d^*+I (b+b^*)$, where $I=oi+pj+qk$ is a space quaternion.
Given by three real commuting complexes $D_1,D_2,D_3$, we can start
with the initial condition $D=i D_1 + j D_2 + k D_3$. We initially have the Pythagorean relation
$L=D^2 = -L_1^2 - L_2^2 - L_3^2$.
Quaternions are best implemented as complex $2 \times 2$ matrices using Pauli matrices
$\sigma_i$.  The real number $1$ is represented by the identity matrix,
the standard imaginary square root of $-1$ is $i \sigma_3$, the quaternion
$j$ is $i \sigma_2$ and the quaternion $k$ is $i \sigma_1$. In other words,
the explicit translation from a quaternion $a+ib+jc+kd$ to a complex
$2 \times 2$ matrix $\left[ \begin{array}{cc} a+ib & c+id \\ -c+id & a-ib \end{array} \right]$.
The complex parameter $I$ is now just an arbitrary
element in the Lie algebra of $SU(2)$.

\paragraph{}
{\bf Example.} Take the smallest simplicial complex $G=\{ \{1\},\{2\},\{1,2\} \}$ for which
$D=\left[ \begin{array}{ccc}0 & 0 & -1 \\0 & 0 & 1 \\-1 &
1 & 0 \\ \end{array} \right]$. To evolve in the quaternion domain, take three copies and
glue them together to get the new quaternion valued Dirac operator: 
$$ D=\left[ \begin{array}{cccccc} 0 & 0 & 0 & 0 & -i & -1-i \\ 0 & 0 &
0 & 0 & 1-i & i \\ 0 & 0 & 0 & 0 & i & 1+i \\ 0 & 0 & 0 & 0 & -1+i &
-i \\ -i & -1-i & i & 1+i & 0 & 0 \\ 1-i & i & -1+i & -i & 0 & 0 \\
\end{array} \right] \; .  $$
Its Laplacian $L=D^2$ is just the original Hodge Laplacian,
where each entry $1$ is replaced with a $2 \times 2$ identity matrix.
Here is the deformed operator at time $t=0.1$ in the case $I=i$ (the quaternion $i$):
$$ D(t) = \left[ \begin{array}{cccccc}
0\,+0.36i&0.38\,+0.33i&0\,-0.36i&-0.38-0.33i&-0.19-0.85i& -0.86-0.86i\\
-0.38+0.33i&0\,-0.36i&0.38\,-0.33i&0\,+0.36i&0.86\,-0.86i &-0.19+0.85i\\
0\,-0.36i&-0.38-0.33i&0\,+0.36i&0.3 \,+0.33i&0.19\,+0.85i &0.86\,+0.86i\\
0.38\,-0.33i&0\,+0.36i&-0.38+0.33i&0\,-0.36i&-0.86+0.86i& 0.19\,-0.85i\\
0.19\,-0.85i&-0.86-0.86i&-0.19+0.85i&0.86\,+0.86i&0\,-0.72i &-0.67-0.77i\\
0.86\,-0.86i&0.19\,+0.85i&-0.86+0.86i&-0.19-0.85i&0.67\,-0.77
i&0\,+0.72i\\ \end{array} \right]  \; . $$

\paragraph{}
Let $G$ be a finite abstract simplicial complex with Barycentric refinement $G_1$
and let $n$ be the number of simplices of $G$. The {\bf connection graph} $G'$ of $G$ is
defined as the graph with vertex set $G$, and where two vertices are connected,
if they intersect. The graph $G_1$ is a subgraph of $G'$. Already
the case of a cyclic graph shows that $G'$ has in general a higher dimension
than $G$. Define the {\bf connection matrix} $L$ by $L(x,y)=1$ if $x \cap y \neq \emptyset$ and
$L(x,y)=0$ else.

\paragraph{}
The diagonal entries of $L$ correspond to self interactions, the
side diagonal elements are interactions between simplices.
With $S={\rm Diag}((-1)^{{\rm dim}(x)})$, define $J=S^{-1} I S$, where $I$ is the matrix containing only
$1$. The matrix satisfies $J_{ij}=(-1)^{i+j}$. It is a checkerboard matrix.
Define the {\bf Wu matrix} $W J$. We are interested in the matrix $W$ because it gives the Wu
characteristic: the Wu characteristic $\omega(G)$ of a finite simple graph is
equal to ${\rm tr}(W J)$.

\paragraph{}
A square matrix is called {\bf unimodular} if its determinant is either $-1$ or $1$.
This implies that all entries of the inverse are integer-valued.
While the Fredholm determinant of a general graph can be pretty
arbitrary, the Fredholm determinant of a connection graph is always
plus or minus $1$ implying that $L^{-1}$ is always an
integer matrix. Define the {\bf Fredholm characteristic}
$\psi(G) = {\rm det}(L)$ and the {\bf Fermi characteristic}
$\phi(G) = \prod_x \omega(x)$, where the sum is over all simplices in $G$
and $\omega(x) = (-1)^{{\rm dim}(x)}$.

\paragraph{}
For any finite simple graph, the Fredholm matrix $L$ of
its connection matrix is unimodular: $\psi(G)=\phi(G)$.  (We were writing
this paragraph of the current paper, when the unimodularity discovery happend,
delaying this by 2 years).

\paragraph{}
Finally, we notice that the graph $G_1'$ and $G_1$ are homotopic.
It is not true without a refinement step.
The octahedron graph $G$ has a connection graph $G'$ which is not homotopic to $G$:
the Euler characteristic of the connection graph is $0$. Its $f$-vector
is $(26, 180, 556, 918, 900, 560, 224, 54, 6)$. Its Betti vector is
$(1, 0, 0, 1, 0, 0, 0, 0, 0)$. It is topologically a 3-sphere, a Hopf
fibration of the 2-sphere as the graph has developed a
nontrivial three dimensional cohomology. It is not contractible.
If one removes all zero dimensional vertices, we still have the same
cohomology but a smaller graph with f-vector
$v(G)=(20, 132, 388, 582, 480, 224, 56, 6)$.

\paragraph{}
The Fredholm determinant of a Barycentric
refinement $G_1$ of a graph $G$ is always $1$.
There is an explicit linear map $f(G) \to Af = f(G_1)$ which gives
the $f$-vector of the Barycentric refinement $G_1$ from $G$. The image
always has an even number of odd-dimensional simplices.

\paragraph{}
There are various other connection matrices or connection tensors one
could look at. For any $k$ one can look at the tensor $L(x_1, \dots, x_k)$ which
is $1$ if the simplices $x_1, \dots, x_k$ pairwise intersect and $0$ else. But
it is the quadratic case which obviously is the most important one as we then
have a matrix which has a determinant and eigenvalues. An other structure we
have looked at but not found useful yet is to look at the set of intersecting
simplices $(x,y)$. If there are $n$ such pairs, look at the $n \times n$ matrix
which is $L((x,y),(u,v))=1$ if $(x,y)$ and $(u,v)$ intersect somewhere and $0$
else. One can similarly define such intersection matrices for $k>2$ also.
So far, we have not yet discovered a nice algebraic fact like the unimodularity
of the connection matrix.

\paragraph{}
In any case, whatever Laplacian is taken, we can look at it in the Barycentric limit
\cite{KnillBarycentric,KnillBarycentric2}.

\section{Historical remarks}

\paragraph{About simplicial complexes}

\paragraph{}
Abstract simplicial complexes were first defined in 1907 by Dehn and Heegaard
\cite{BurdeZieschang} and used further by Tietze, Brouwer, Steinitz, Veblen, Whitney, Weyl
and Kneser. In \cite{alexandroff}, Alexandroff calls a simplicial complex an
{\bf unrestricted skeleton complex}, whereas any finite sets is a {\bf skeleton complex}.
Still mostly used in Euclidean settings like \cite{Hatcher}, modern
topology textbooks like \cite{spanier,Maunder} or \cite{LeeTopologicalManifolds} use the
abstract version. A generalization of simplicial complexes are simplicial sets.

\paragraph{}
In \cite{Maunder}, the join of two simplicial complexes is defined. The definition for graphs goes
back to Zykov \cite{Zykov}. In \cite{JonssonSimplicial,Stanley86}, 
the empty set, the "void" is included in a complex.
This leads to reduced $f$-vectors $(v_{-1},v_0,...)$ with $v_{-1}=1$ and
{\bf reduced Euler characteristic} $\chi(G)-1$ known in enumerative combinatorics. The
number $1-\chi(G)$ is in one-dimensions the {\bf genus}. It is multiplicative when
taking joins of simplicial complexes.

\paragraph{About Wu characteristic}
Wu defines the characteristic numbers through the property that none of the simplices intersect. 
Lets call this $\tilde{\omega}(G)$.  For $k=2$, one can write this as $\chi(G)^2 - \tilde{\omega}(G)$
so that we have nothing new when defining this. For the higher order versions however, the relations
are no more so direct.  
We should also note that Wu, as custom for combinatorial topology in the 20'th century, 
dealt with polytopes. But since there are various definitions of polytopes \cite{Gruenbaum2003},
we don't use that language. 

\section{Questions}

\paragraph{}
Since we do not have homotopy invariance but invariance under Barycentric refinement, 
an important question is whether the cohomology is a topological invariant in the sense of 
\cite{KnillTopology}. Under which conditions is it true that if $H_1,H_2$ are homeomorphic
and $G \setminus H_1$ and $G \setminus H_2$ are topologically equivalent, then
then $H^p(G,H_1)=H^p(G,H_2)$? Is it true that $H^p(H_1) = H^p(H_2)$?  
We believe that both statements are true. The intuition comes from the believe that there is a 
continuum limit, where we can do deformations.
One attempt to construct a continuum cohomology is to look at the product complex of the usual de-Rham
complex on the product $M \times M$ of a manifold $M$, then take a limit of equivalence relations
where a pair $(f,g)$ of $k$ and $l$ forms with $k+l=p$ is restricted to the diagonal. If the Hodge
story goes over, then the homotopy invariance follows from elliptic regularity assuring that there
is a positive distance between the zero eigenvalues and the next larger eigenvalue. A 
small deformation of the geometry changes the eigenvalues continuously so that the kernel of the Hodge
operators on $p$-forms stay robust. This strategy could work while staying within the discrete: 
when doing Barycentric refinements, we can make the spectral effect of a homotopy deformation small.

\paragraph{}
Also not explored is what the Betti numbers of a $d$-complex with
or without boundary are. We have already a boundary formula
$\omega_2(G)=\omega(G) = \chi(G)-\chi(\delta{G})$ 
for Wu characteristic. Are there formulas for the Betti numbers in that case which 
relate the simplicial Betti numbers of $G$ and $\delta G$ with the Betti numbers of
the $k=2$ cohomology. 

\paragraph{}
We have seen that for the quadratic cohomology, there are examples of 
complexes like the M\"obius strip which have trivial cohomology in the sense that the Betti 
vector is 0. This is equivalent to the fact that the Hodge Laplacian is
invertible. This obviously does not happen for simplicial cohomology but
the M\"obius strip is an example for interaction cohomology. One can ask
which spaces have this property. Such a space necessarily has to have 
zero Wu characteristic. 

\paragraph{}
We have tried to compute the cohomology of the Poincar\'e Homology sphere
which is generated by 
$\{ \{1,2,4,9\}$, $\{1,2,4,15\}$, $\{1,2,6,14\}$, $\{1,2,6,15\}$,
$\{1,2,9,14\}$, $\{1,3,4,12\}$, $\{1,3,4,15\}$, $\{1,3,7,10\}$,
$\{1,3,7,12\}$, $\{1,3,10,15\}, \{1,4,9,12\}$, $\{1,5,6,13\}$,
$\{1,5,6,14\}$, $\{1,5,8,11\}$, $\{1,5,8,13\}$, $\{1,5,11,14\}$,
$\{1,6,13,15\}$, $\{1,7,8,10\}$, $\{1,7,8,11\}$, $\{1,7,11,12\},
\{1,8,10,13\}$, $\{1,9,11,12\}$, $\{1,9,11,14\}$, $\{1,10,13,15\}$,
$\{2,3,5,10\}$, $\{2,3,5,11\}$, $\{2,3,7,10\}$, $\{2,3,7,13\}$,
$\{2,3,11,13\}$, $\{2,4,9,13\}, \{2,4,11,13\}$, $\{2,4,11,15\}$,
$\{2,5,8,11\}$, $\{2,5,8,12\}$, $\{2,5,10,12\}$, $\{2,6,10,12\}$,
$\{2,6,10,14\}$, $\{2,6,12,15\}$, $\{2,7,9,13\}$, $\{2,7,9,14\},
\{2,7,10,14\}$, $\{2,8,11,15\}$, $\{2,8,12,15\}$, $\{3,4,5,14\}$,
$\{3,4,5,15\}$, $\{3,4,12,14\}$, $\{3,5,10,15\}$, $\{3,5,11,14\}$,
$\{3,7,12,13\}, \{3,11,13,14\}$, $\{3,12,13,14\}$, $\{4,5,6,7\}$,
$\{4,5,6,14\}$, $\{4,5,7,15\}$, $\{4,6,7,11\}$, $\{4,6,10,11\}$,
$\{4,6,10,14\}$, $\{4,7,11,15\}$, $\{4,8,9,12\}, \{4,8,9,13\}$,
$\{4,8,10,13\}$, $\{4,8,10,14\}$, $\{4,8,12,14\}$, $\{4,10,11,13\}$,
$\{5,6,7,13\}$, $\{5,7,9,13\}$, $\{5,7,9,15\}$, $\{5,8,9,12\}$,
$\{5,8,9,13\}, \{5,9,10,12\}$, $\{5,9,10,15\}$, $\{6,7,11,12\}$,
$\{6,7,12,13\}$, $\{6,10,11,12\}$, $\{6,12,13,15\}$, $\{7,8,10,14\}$,
$\{7,8,11,15\}$, $\{7,8,14,15\}, \{7,9,14,15\}$, $\{8,12,14,15\}$,
$\{9,10,11,12\}$, $\{9,10,11,16\}$, $\{9,10,15,16\}$, $\{9,11,14,16\}$,
$\{9,14,15,16\}$, $\{10,11,13,16\}$, $\{10,13,15,16\}, \{11,13,14,16\}$,
$\{12,13,14,15\}$, $\{13,14,15,16\} \}$. The corresponding Laplacian 
is a $70616 \times 70616$ matrix. Unfortunately, our computing resources
were not able yet to find the kernel of its blocks yet. It would be nice
to compare the interaction cohomology of the homology sphere with 
the interaction cohomology of the 3-sphere. 

\vfill

\pagebreak

\section*{Appendix: Code}

The Mathematica procedure "Coho2" computes in less than 12 lines
a basis for all the cohomology groups $H^p(G,H)$ for a pair of 
simplicial complexes $G,H$. We display then the 
Betti numbers $b_p(G,H)={\rm dim}(H^p(G,H))$. 
As in the text, the case $H^p_2(G)$ is the same than $H^p(G,G)$. 

\begin{tiny}
\lstset{language=Mathematica} \lstset{frameround=fttt}
\begin{lstlisting}[frame=single]
Coho2[G_,H_]:=Module[{},n=Length[G];m=Length[H];len[x_]:=Total[Map[Length,x]];U={}; 
  Do[If[Length[Intersection[G[[i]],H[[j]]]]>0,U=Append[U,{G[[i]],H[[j]]}]],{i,n},{j,m}];
  U=Sort[U,len[#1]<len[#2] & ];u=Length[U];l=Map[len,U]; w=Union[l]; 
  b=Prepend[Table[Max[Flatten[Position[l,w[[k]]]]],{k,Length[w]}],0]; h=Length[b]-1; 
  deriv1[{x_,y_}]:=Table[{Sort[Delete[x,k]],y},{k,Length[x]}];
  deriv2[{x_,y_}]:=Table[{x,Sort[Delete[y,k]]},{k,Length[y]}];
  d1=Table[0,{u},{u}]; Do[v=deriv1[U[[m]]]; If[Length[v]>0,
    Do[r=Position[U,v[[k]]]; If[r!={},d1[[m,r[[1,1]]]]=(-1)^k],{k,Length[v]}]],{m,u}];
  d2=Table[0,{u},{u}]; Do[v=deriv2[U[[m]]]; If[Length[v]>0,
    Do[r=Position[U,v[[k]]]; If[r!={},d2[[m,r[[1,1]]]]=(-1)^(Length[U[[m,1]]]+k)],
    {k,Length[v]}]],{m,u}]; d=d1+d2; Dirac=d+Transpose[d]; L=Dirac.Dirac; Map[NullSpace,
  Table[Table[L[[b[[k]]+i,b[[k]]+j]],{i,b[[k+1]]-b[[k]]},{j,b[[k+1]]-b[[k]]}],{k,h}]]];
Betti2[G_,H_]:=Map[Length,Coho2[G,H]];Coho2[G_]:=Coho2[G,G]; Betti2[G_]:=Betti2[G,G]; 
Generate[A_]:=Sort[Delete[Union[Sort[Flatten[Map[Subsets,A],1]]],1]];

moebius=Generate[{{1,2,5},{1,4,5},{1,4,7},{2,3,6},{2,5,6},{3,6,7},{4,3,7}}];
Print["Moebius Strip: ",Betti2[moebius]];
cylinder=Generate[{{1,2,5},{1,4,8},{1,5,8},{2,3,6},{2,5,6},{3,4,7},{3,6,7},{4,7,8}}];
Print["Cylinder: ",Betti2[cylinder]];
circle=Generate[{{1,2},{2,3},{3,4},{4,5},{5,1}}]; 
Print["Circle: ",Betti2[circle]]; 
figureeight=Generate[{{1,2},{1,4},{2,3},{2,5},{2,7},{3,4},{5,6},{6,7}}]; 
Print["Figure Eight: ",Betti2[figureeight]]; 

point=Generate[{{1}}]; 
Print["1-simplex: ",Betti2[point]]; 
segment=Generate[{{1,2}}]; 
Print["2-simplex: ",Betti2[segment]]; 
triangle=Generate[{{1,2,3}}]; 
Print["3-simplex: ",Betti2[triangle]]; 
tetrahedron=Generate[{{1,2,3,4}}]; 
Print["4-simplex: ",Betti2[tetrahedron]]; 

Print["Point in interval: ",Betti2[{{1,2},{2,3},{1},{2},{3}},{{2}}]];
Print["Point on boundary: ",Betti2[{{1,2},{2,3},{1},{2},{3}},{{1}}]];

house=Generate[{{1,2},{2,3},{3,4},{4,1},{2,3,5}}];
Print["House graph: ",Betti2[house]];

octahedron=Generate[{{1,2,3},{1,2,4},{1,3,5},{1,4,5},{2,3,6},{2,4,6},{3,5,6},{4,5,6}}];
Print["Octahedron: ",Betti2[octahedron]];
equator=Generate[{{2,3},{3,5},{5,4},{4,2}}];
Print["Circle in octahedron: ",Betti2[octahedron,equator]];

kleinbottle=Generate[{{1,2,3},{1,2,6},{1,3,5},{1,4,7},{1,4,8},{1,5,7},
{1,6,8},{2,3,7},{2,4,6},{2,4,8},{2,5,7},{2,5,8},{3,4,6},{3,4,7},{3,5,6},{5,6,8}}];
Print["Klein bottle: ",Betti2[kleinbottle]]; 

projectiveplane=Generate[{{1,2,5},{1,2,9},{1,4,5},{1,4,7},{1,8,7},{1,8,9},{2,3,6},
{2,3,10},{2,5,6},{2,9,10},{3,6,7},{3,10,11},{4,3,7},{4,3,11},{4,5,12},
{4,11,12},{5,6,13},{5,12,13},{6,7,14},{6,13,14},{8,7,14},{8,9,15},
{8,14,15},{9,10,15},{10,11,15},{11,12,15},{12,13,15},{13,14,15}}];
Print["Projective Plane: ",Betti2[projectiveplane]]; 

Print["The next computation deals with 4160x4160 matrices: patience please "]; 
threesphere=Generate[{{1,3,5,7},{1,3,5,8},{1,3,6,7},{1,3,6,8},
{1,4,5,7},{1,4,5,8},{1,4,6,7},{1,4,6,8},{3,5,7,2},{3,5,8,2},
{3,6,7,2},{3,6,8,2},{4,5,7,2},{4,5,8,2},{4,6,7,2},{4,6,8,2}}];
Print["3-sphere: ",Betti2[threesphere]];

RandomSimplComplex[n_,m_]:=Module[{A={},X=Range[n],k},Do[k:=1+Random[Integer,n-1];
  A=Append[A,Union[RandomChoice[X,k]]],{m}];Generate[A]];
Print["Random Complex: ",Betti2[RandomSimplComplex[6,10]]]
\end{lstlisting}
\end{tiny}

\hfill
\pagebreak

\section*{Appendix: math table talk: "Wu Characteristic"}

This was a Handout to "Wu Characteristic", a Harvard math table talk given on March 8, 2016
\cite{MathTableMarch}. It was formulated in the language of graphs. 
In that talk, the exterior derivative was not yet finalized. 
It gave a cohomology which was not yet invariant under Barycentric refinements. \\

{\bf Definitions.} 
Let $G=(V,E)$ be a {\bf finite simple graph} with vertex set $V$ and edge set $E$.
The {\bf $f$-vector} $v(G)=(v_0(G),v_1(G), \dots,v_d(G))$ 
contains as coordinates the number $v_k(G)$ of complete subgraphs $K_{k+1}$ of $G$. These
subgraphs are also called {\bf k-simplices} or {\bf cliques}. The {\bf $f$-matrix} $V(G)$ has the entries
$V_{ij}(G)$ counting the number of intersecting pairs $(x,y)$, where $x$ is a $i$-simplex
and $y$ is a $j$-simplex in $G$. The {\bf Euler characteristic} of $G$ is $\chi(G)=\sum_i (-1)^i v_i(G)$.
The {\bf Wu characteristic} of $G$ is $\omega(G) = \sum_{ij} (-1)^{i+j} V_{ij}(G)$. 
If $A,B$ are two graphs, the {\bf graph product} is a new graph which has as vertex set the set of pairs $(x,y)$, where
$x$ is a simplex in $A$ and $y$ is a simplex in $B$. Two such pairs $(x,y),(a,b)$ are connected in the product
if either $x \subset a,y \subset b$ or $a \subset x, b \subset y$. 
The product of $G$ with $K_1$ is called the {\bf Barycentric refinement} $G_1$ of $G$. Its vertices are
the simplices of $G$. Two simplices are connected in $G_1$ if one is contained in the other. If $W$ is a subset
of $V$, it {\bf generates the graph} $(W,F)$, where $F$ is the subset of $(a,b) \in E$ for 
which $a,b \in W$. The {\bf unit sphere} $S(v)$ of a vertex $v$ is the subgraph generated by the vertices 
connected to $v$. A function $f: V \to R$ satisfying $f(a) \neq f(b)$ for $(a,b) \in E$ is called a 
{\bf coloring}. The minimal range of a coloring is the {\bf chromatic number} of $G$. 
Define the {\bf Euler curvature} $\kappa(v) = \sum_{k=0}^\infty (-1)^k v_{k-1}(S(v))/(k+1)$
and the {\bf Poincar\'e-Hopf index} $i_f(v) = 1-\chi(S^-_f(v))$, where $S^-_f(x)$ is the graph generated
by $S^-_f(v)=\{ w \in S(v) \; | \; f(w)<f(v) \}$. Fix an orientation on {\bf facets} on $G$ simplices of 
maximal dimension. Let $\Omega^{-1}=\{0\},\Omega^k(G)$ be the set of functions $f(x_0,\dots,x_k)$ from the set of 
$k$-simplices of $G$ to $R$ which are anti-symmetric.
The {\bf exterior derivatives} $d_{k}f(x_0, \dots, x_k) = \sum_{j=0}^{k} (-1)^j f(x_0, \dots, \hat{x}_j, \dots x_k)$
defines linear transformations. The orientation fixes {\bf incidence matrices}. They can be collected 
together to a large $n \times n$ matrix $d$ with $n = \sum_{i=0}^d v_i$. The matrix $d$ is called 
the {\bf exterior derivative}. Since $d_{k+1} d_k=0$, the image of $d_k$ is contained in the kernel of $d_{k+1}$. 
The vector space $H^k(G) = {\rm ker}(d_k)/{\rm im}(d_{k-1})$ is the {\bf $k$'th cohomology group} of $G$. 
Its dimension $b_k(G)$ is called the {\bf $k$'th Betti number}. The symmetric matrix $D=d+d^*$ is the 
{\bf Dirac operator} of $G$. Its square $L=D^2$ is the {\bf Laplacian} of $G$. It decomposes into blocks $L_k$
the {\bf Laplacian on $k$-forms}. The {\bf super trace} of $L$ is ${\rm str}(L) = \sum_k (-1)^k {\rm tr}(L_k)$. 
A {\bf graph automorphism} $T$ is a permutation of $V$ such that if $(a,b) \in E$, then $(T(a),T(b)) \in E$. 
The set of graph automorphisms forms a {\bf automorphism group} ${\rm Aut}(G) = {\rm Aut}(G_1)$.
If $A$ is a subgroup of ${\rm Aut}(G)$, we can form $G_k/A$ which is for $k \geq 2$
again a graph. Think of $G$ as a {\bf branched cover} of $G/A$. For a simplex $x$, define its 
{\bf ramification index} 
$e_x=1-\sum_{1 \neq T \in A, T(x)=x} (-1)^{{\rm dim}(x)}$. 
Any $T \in {\rm Aut}(G)$ induces a linear map $T_k$ on $H^k(G)$. 
The {\bf super trace} $\chi_T(G) = \sum_{k} (-1)^k {\rm tr}(T_k)$ is the {\bf Lefschetz number} of $T$. For $T=1$
we have $\chi_{\rm Id}(G)=\chi(G)$. 
For a simplex $x=T(x)$, define the {\bf Brouwer index} $i_T(x) = (-1)^{{\rm dim}(x)} {\rm sign}(T|x)$
\cite{brouwergraph}.  An integer-valued function $X$ in $\Omega$ is called a {\bf divisor}.
The {\bf Euler characteristic} of $X$ is $\chi(X) = \chi(G)+\sum_v X(v)$. If $v_2(G)=0$,
it is $1-b_1+{\rm deg}(X)$, where $b_1=g$ is the {\bf genus}. 
If $f$ is a divisor, then $(f)=L f$ is called a {\bf principal divisor}. Two divisors are {\bf linearly equivalent}
if $X-Y=(f)$ for some $f$. A divisor $X$ is {\bf essential} if $X(v) \geq 0$ for all $v \in V$. 
The {\bf linear system} $|X|$ of $X$ is $\{ f \; | \;$ $X+(f)$ is essential $\}$.
The {\bf dimension} $l(X)$ of $X$ is $-1$ if $|X|=\emptyset$ and else max $k \geq 0$ so that for all $m \leq k$ 
and all $\chi(Y)=m$ the divisor $X-Y$ is essential. The {\bf canonical divisor} $K$
is for graph without triangles defined as $K(v)=-2\kappa(v)$, where $\kappa(v)$ is the curvature. 
The {\bf dimension} of a graph is $-1$ for the empty graph and inductively the average of the dimensions of the unit 
spheres plus $1$.
For the rabbit graph $G$, the f-vector is $v(G) = [5,5,1]$, the f-matrix is 
$V(G) = [[5, 10, 3],[10, 21, 5],[3, 5, 1]]$. Its dimension is
${\rm dim}(G)=7/5$, the chromatic number is $3$, the Euler Betti numbers are $b(G)=(1,0)$ 
which super sums to Euler characteristic $\chi(G)=\omega_1(G)=\sum_i (-1)^i v_i(G)=\sum_i (-1)^i b_i(G)=1$.
The Wu Betti numbers are $b(G)=(0,2,6,1,0)$. 
The super sum is the Wu characteristic $\omega_2(G)=\sum_{i,j} (-1)^{i+j} V(G)_{ij}=3$. 
The cubic Wu Betti numbers are $(0, 2, 16, 34, 16, 1, 0)$ which super sums to cubic Wu $\omega_3(G) = -5$.

{\bf Theorems} \\
Here are adaptations of theorems for Euler characteristic to graph theory. Take any
graph $G$. The left and right hand side are always {\bf concrete integers}. 
The theorem tells that they are equal and show: ``{\bf The Euler characteristic
is the most important quantity in all of mathematics}".

\tttheorem{Gauss-Bonnet:  $\chi(G) = \sum_{v \in V} \kappa(v)$}
\tttheorem{Poincar\'e-Hopf: $\chi(G) = \sum_{v \in V} i_f(v)$}
\tttheorem{Euler-Poincar\'e: $\chi(G) = \sum_i (-1)^i b_i(G)$}
\tttheorem{McKean-Singer: $\chi(G) = {\rm str}(e^{-t L})$}
\tttheorem{Steinitz-DeRham: $\chi(G \times H) = \chi(G) \chi(H)$}
\tttheorem{Brouwer-Lefschetz: $\chi_T(G)=\sum_{x=T(x)} i_T(x)$ }
\tttheorem{Riemann-Hurwitz: $\chi(G) = |A|  \; \chi(G/A) - \sum_{x} (e_x-1)$   }
\tttheorem{Riemann-Roch:  $\chi(X) = l(X) - l(K-X)$ }

All theorems except Riemann-Roch hold for general finite simple graphs. 
The complexity of the proofs are lower than in the continuum. 
In continuum geometry, lower dimensional parts of space are access using "sheaves" or 
"integral geometry" of "tensor calculus". 
For manifolds, a functional analytic frame works like elliptic regularity is needed to define
the {\bf heat flow} $e^{-tL}$.
We currently work on versions of these theorems to all the {\bf Wu characteristics}
$\omega_k(G)  = \sum_{x_1 \sim \cdots \sim x_k} \omega(x_1) \omega(x_2) \cdots \omega(x_k)$,
where the sum is over all ordered $k$ tuples of simplices in $G$ which intersect. Besides proofs,
we built also computer implementation for all notions allowing to compute things in concrete situations. There are 
Wu versions of curvature, Poincar\'e-Hopf index, Brouwer-Lefschetz index and an interaction cohomology. 
All theorems generalize. Only for Riemann-Roch, we did not complete the adaptation of {\bf Baker-Norine theory}
yet, but it looks very good (take Wu curvature for canonical divisor and use Wu-Poincar\'e-Hopf indices).
What is the significance of Wu characteristic? We don't know yet. The fact that important theorems
generalize, generates optimism that it can be significant in physics as an {\bf interaction functional}
for which extremal graphs have interesting properties. {\bf Tamas Reti} noticed already that
for a triangle free graph with $n$ vertices and $m$ edges, the Wu characteristic is 
$\omega(G)=n-5m+M(G)$, where $M(G)=\sum_v {\rm deg}(v)^2$ is the {\bf first Zagreb index} of $G$.
For Euler characteristic, we have guidance from the continuum.
This is no more the case for Wu characteristics. Nothing similar
appears in the continuum. Related might be {\bf intersection theory}, as Wu characteristic defines 
an {\bf intersection number} $\omega(A,B) = \sum_{x \sim y, x \subset A, y \subset B} \omega(x) \omega(y)$
for two subgraphs $A,B$ of $G$.
To generalize the parts using cohomology, we needed an adaptation of simplicial cohomology to 
a cohomology of interacting simplices. We call it {\bf interaction cohomology}:
one first defines discrete {\bf quadratic differential forms} $F(x,y)$ and
then an exterior derivative $dF(x,y) = F(dx,y)$. As $d^2=0$, one gets cohomologies the usual way. \\

{\bf The Rabbit} \\

\begin{center}
\scalebox{0.18}{\includegraphics{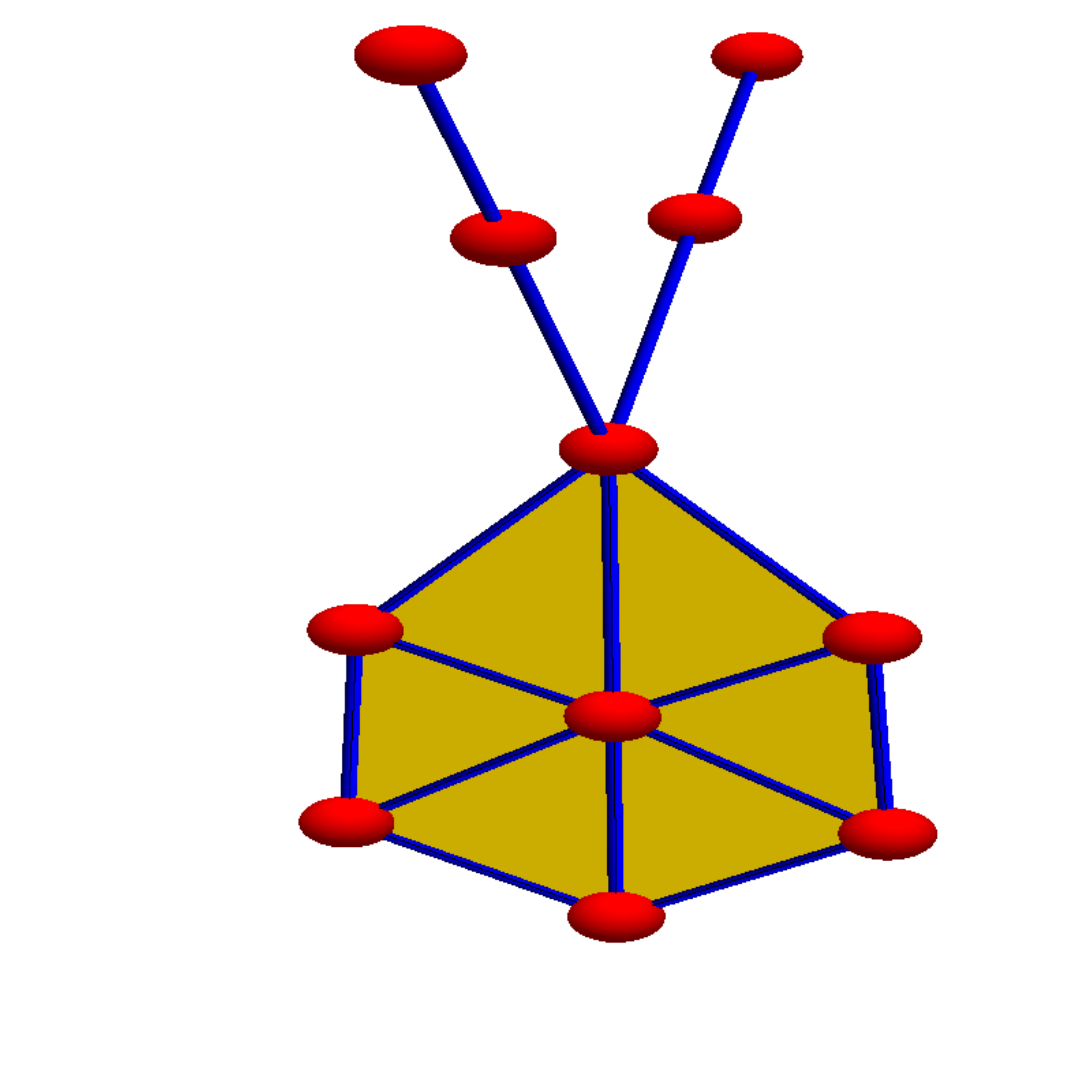}}
\scalebox{0.18}{\includegraphics{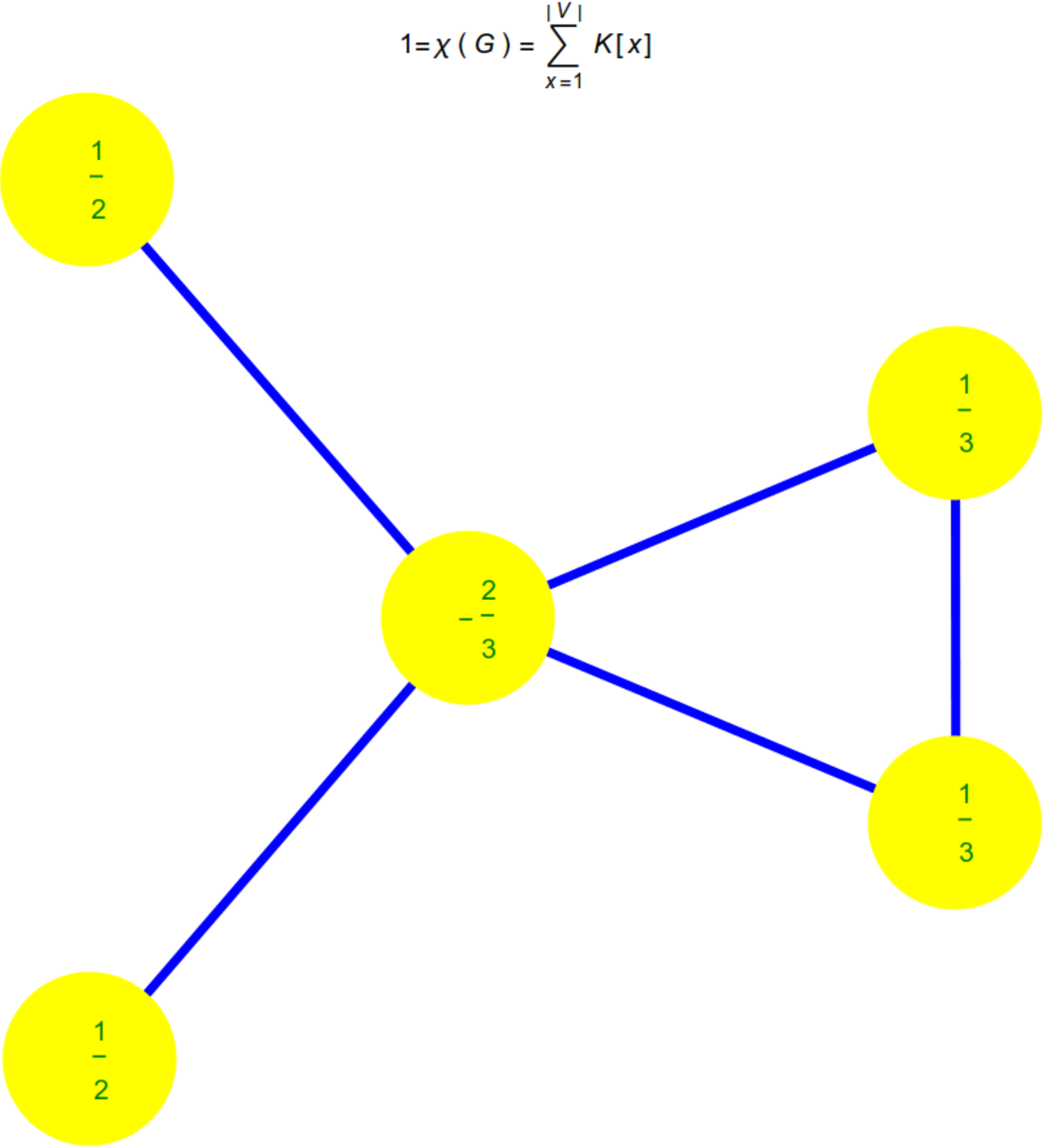}} \\
{\bf 1.} Barycentric refinement of the {\bf rabbit graph} $R$. {\bf 2.} Euler curvatures.
\end{center}

\begin{center}
\scalebox{0.18}{\includegraphics{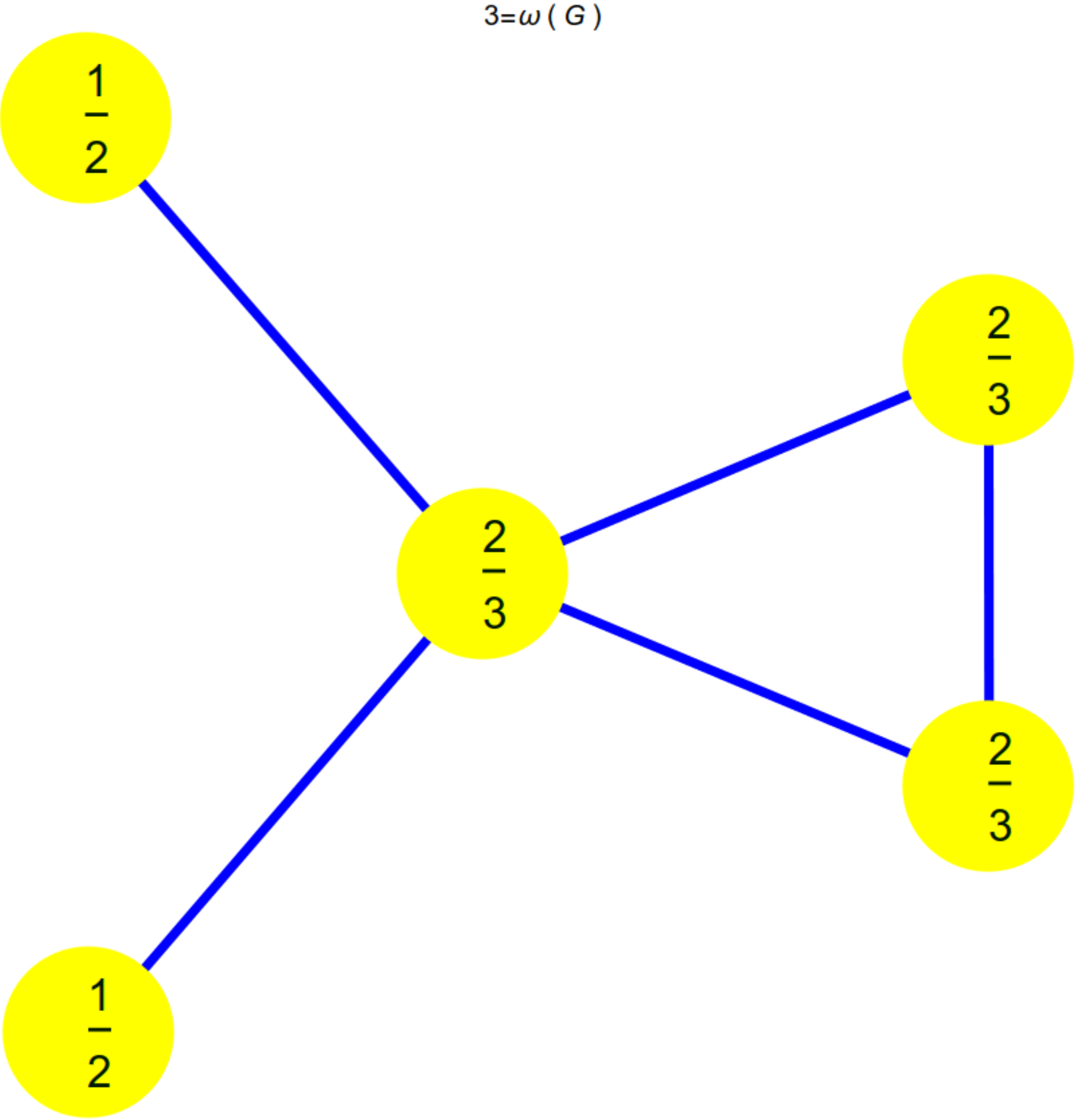}}
\scalebox{0.18}{\includegraphics{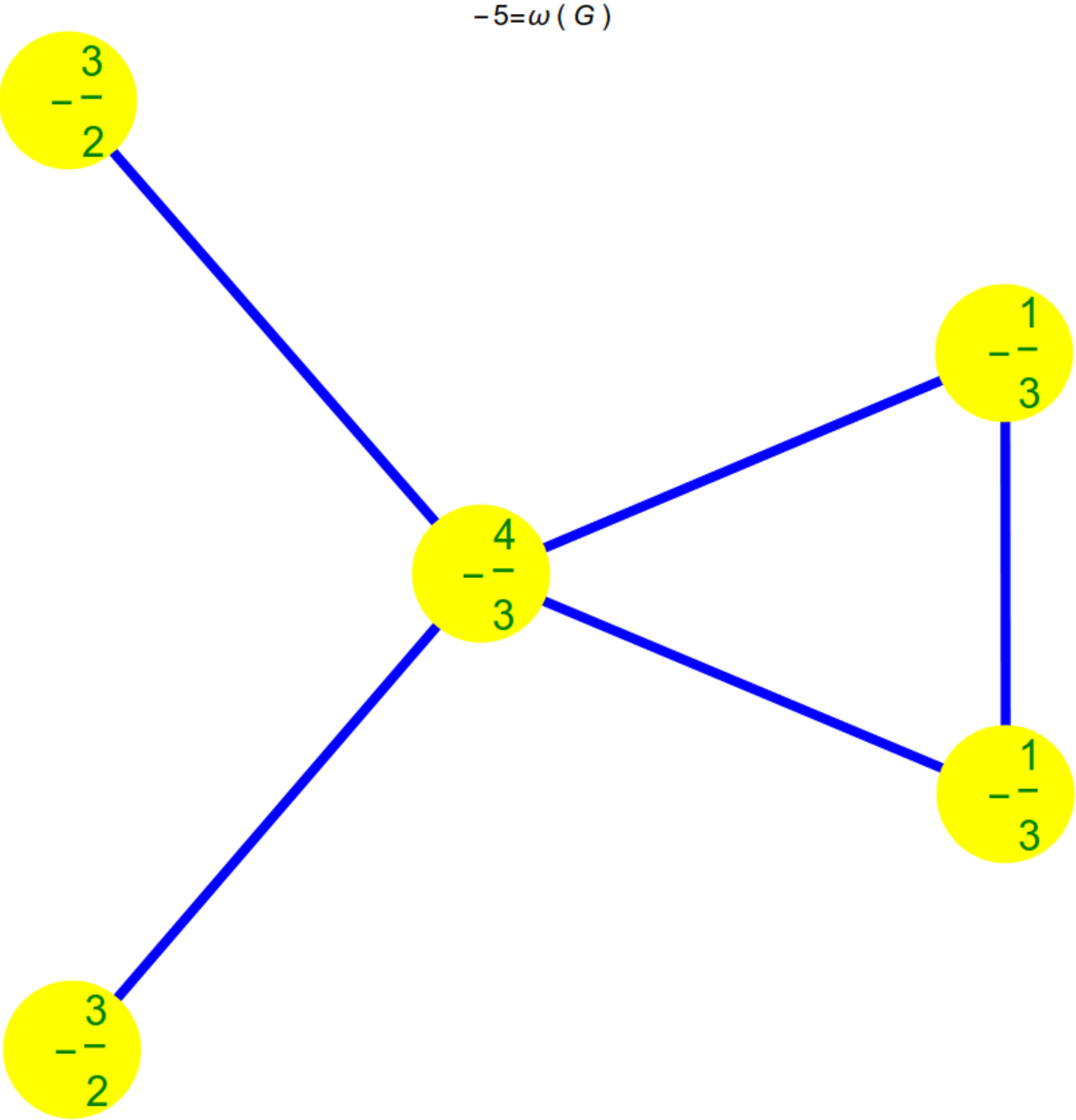}} \\
{\bf 3.} Wu curvatures adding up to $\omega$. {\bf 4.} Cubic Wu curvatures adding up to $\omega_3$.
\end{center} 

\begin{center}
\scalebox{0.18}{\includegraphics{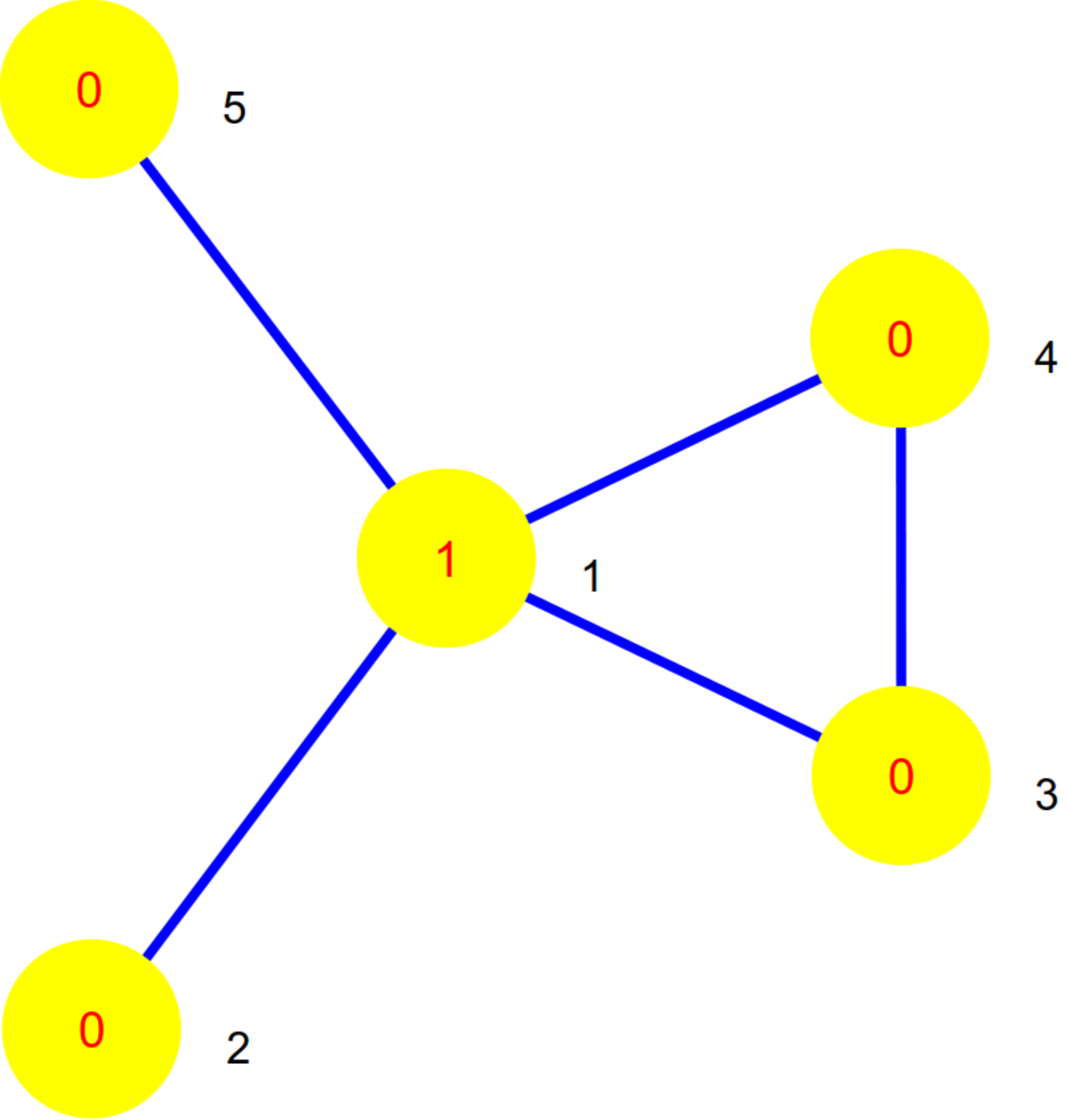}}
\scalebox{0.18}{\includegraphics{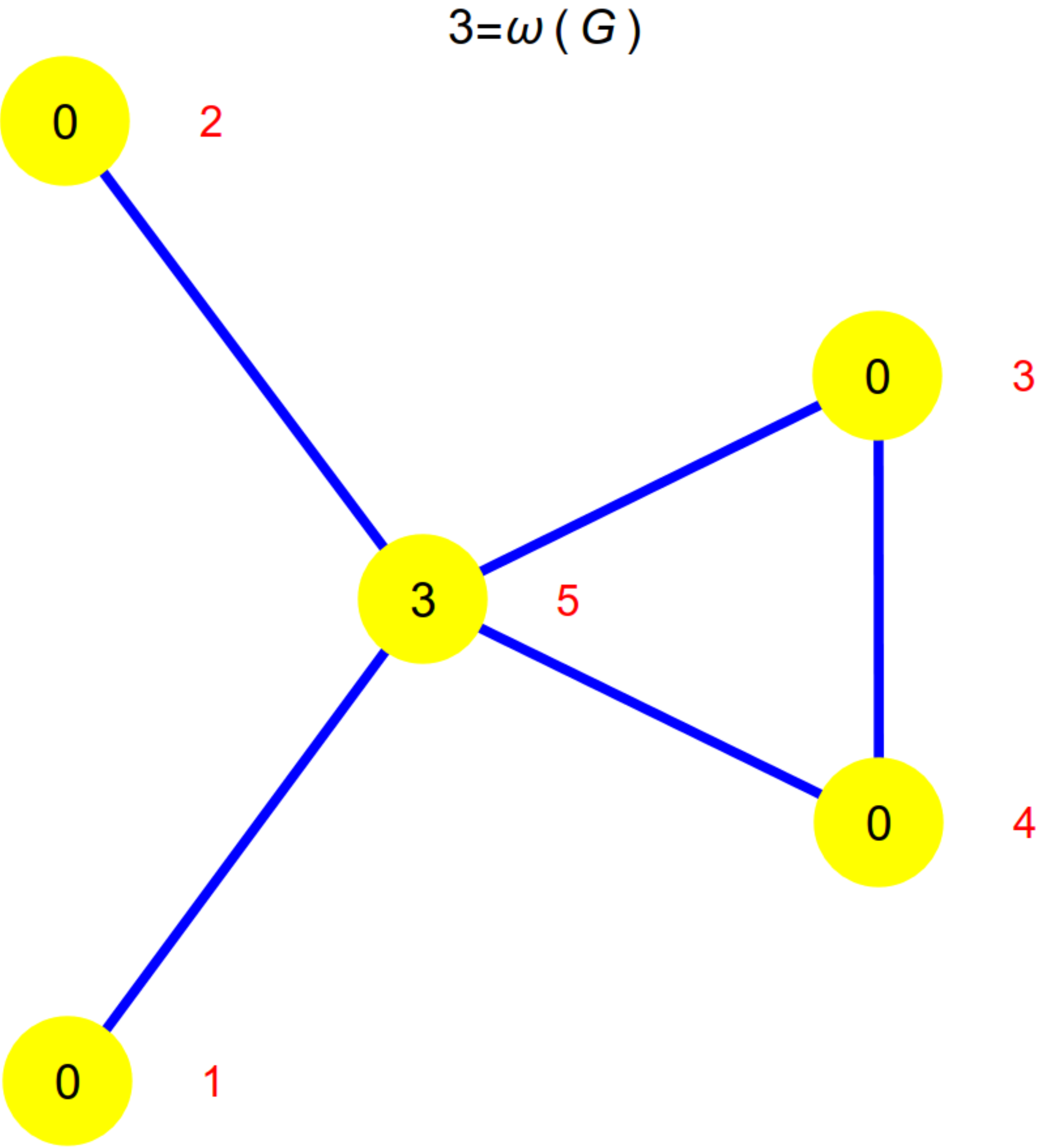}} \\
{\bf 5.} Poincar\'e-Hopf indices.  {\bf 6.} Wu-Poincar\'e-Hopf indices
\end{center} 

\pagebreak

\begin{center}
\scalebox{0.28}{\includegraphics{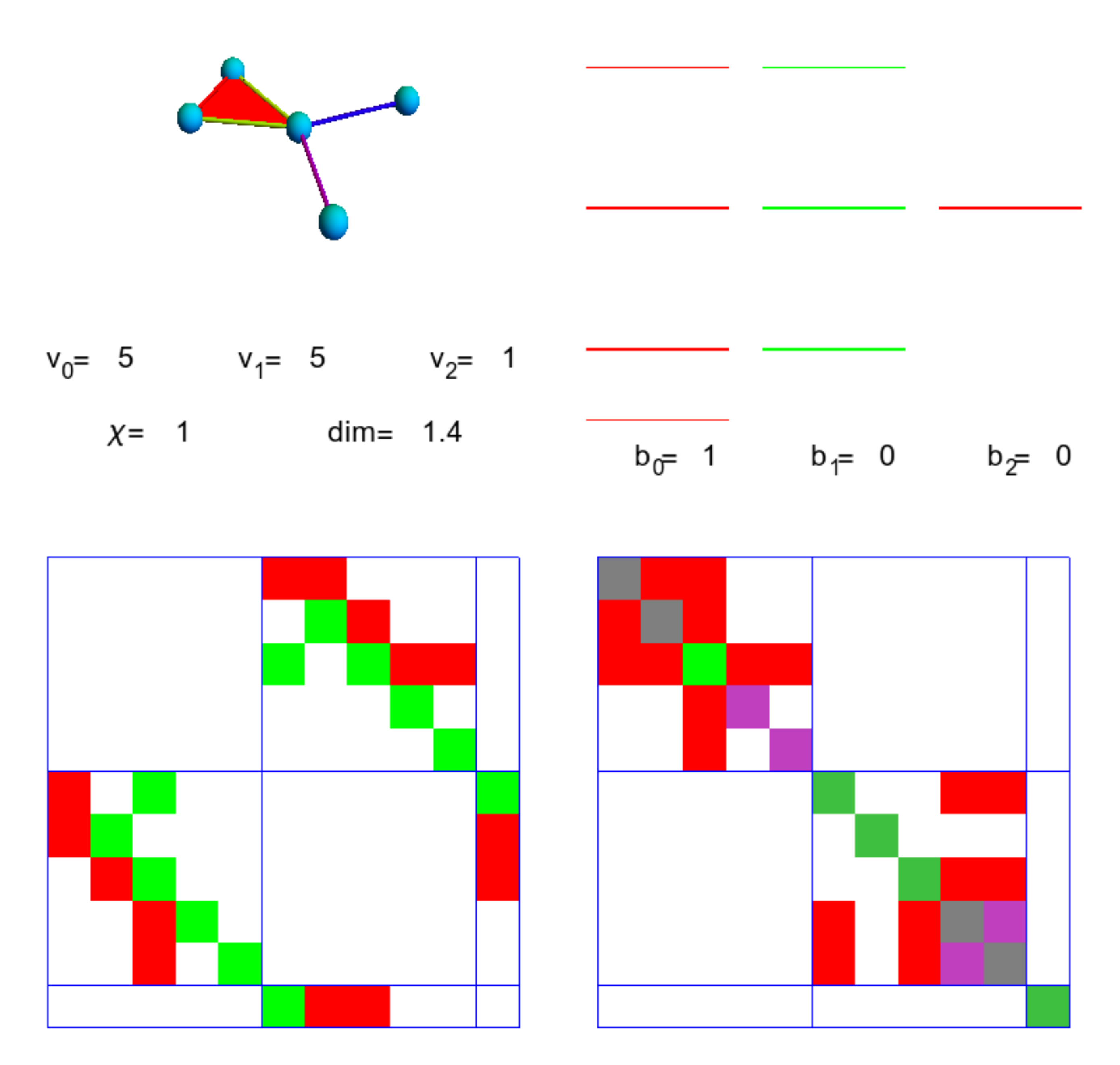}} \\
{\bf 7.} Spectrum $\sigma(L_0)=\{5,3,1,1,0\}$,$\sigma(L_1)= \{5, 3, 3, 1, 1\}$, 
$\sigma(L_2)=\{3\}$, Betti numbers=${\rm dim}({\rm ker}(L_k))$, Dirac operator $D$ and 
Laplacian $L$ of $R$ with 3 blocks. {\bf Super symmetry}: {\bf Bosonic spectrum} $\{5,3,1,1\} \cup \{3\}$
agrees with {\bf Fermionic spectrum} $\{5,3,3,1,1\}$.
\end{center} 

\begin{center}
\scalebox{0.26}{\includegraphics{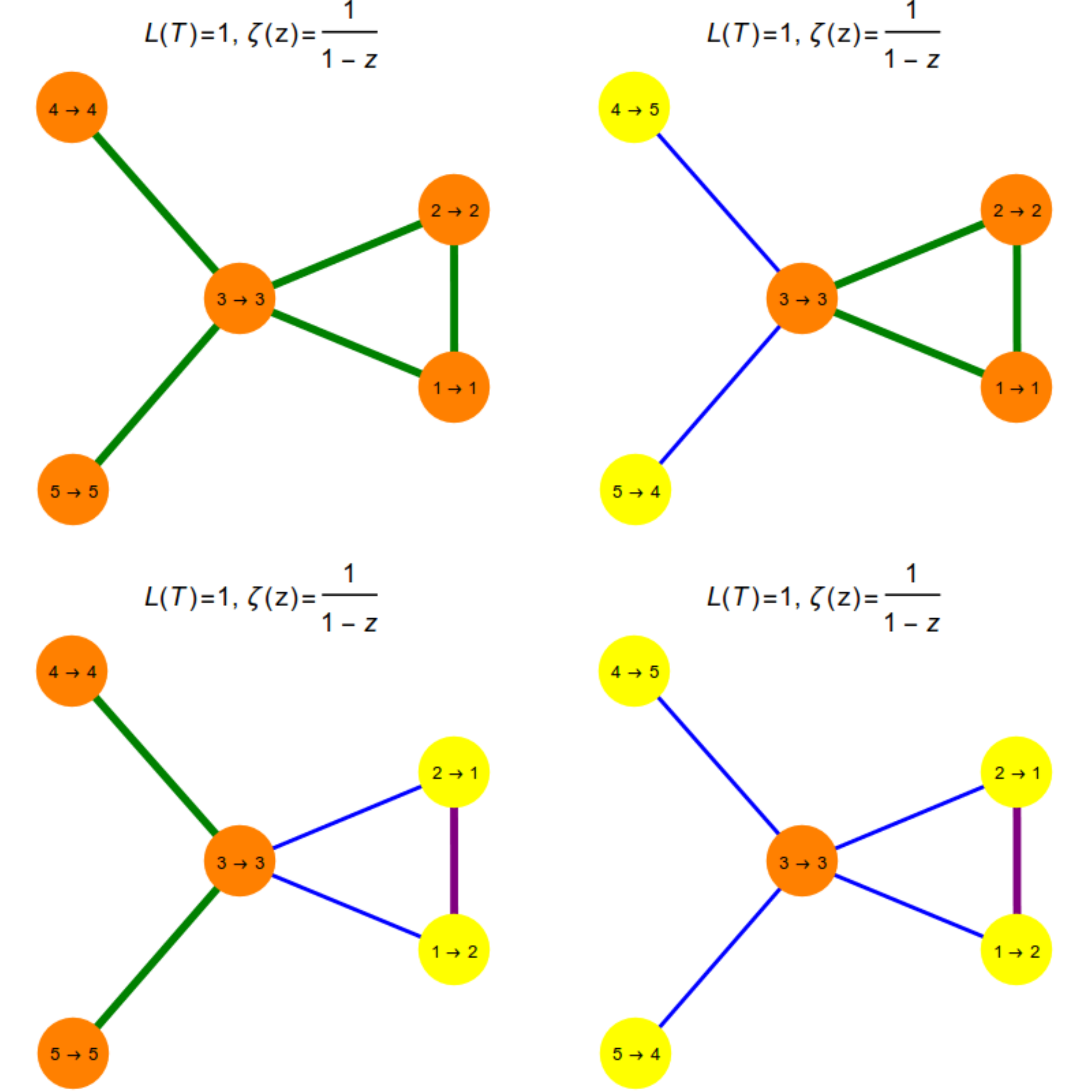}} \\
{\bf 8.} Lefschetz numbers of the 4 automorphisms in ${\rm Aut}(R)=Z_2 \times Z_2$.
\end{center} 

\begin{center}
\scalebox{0.18}{\includegraphics{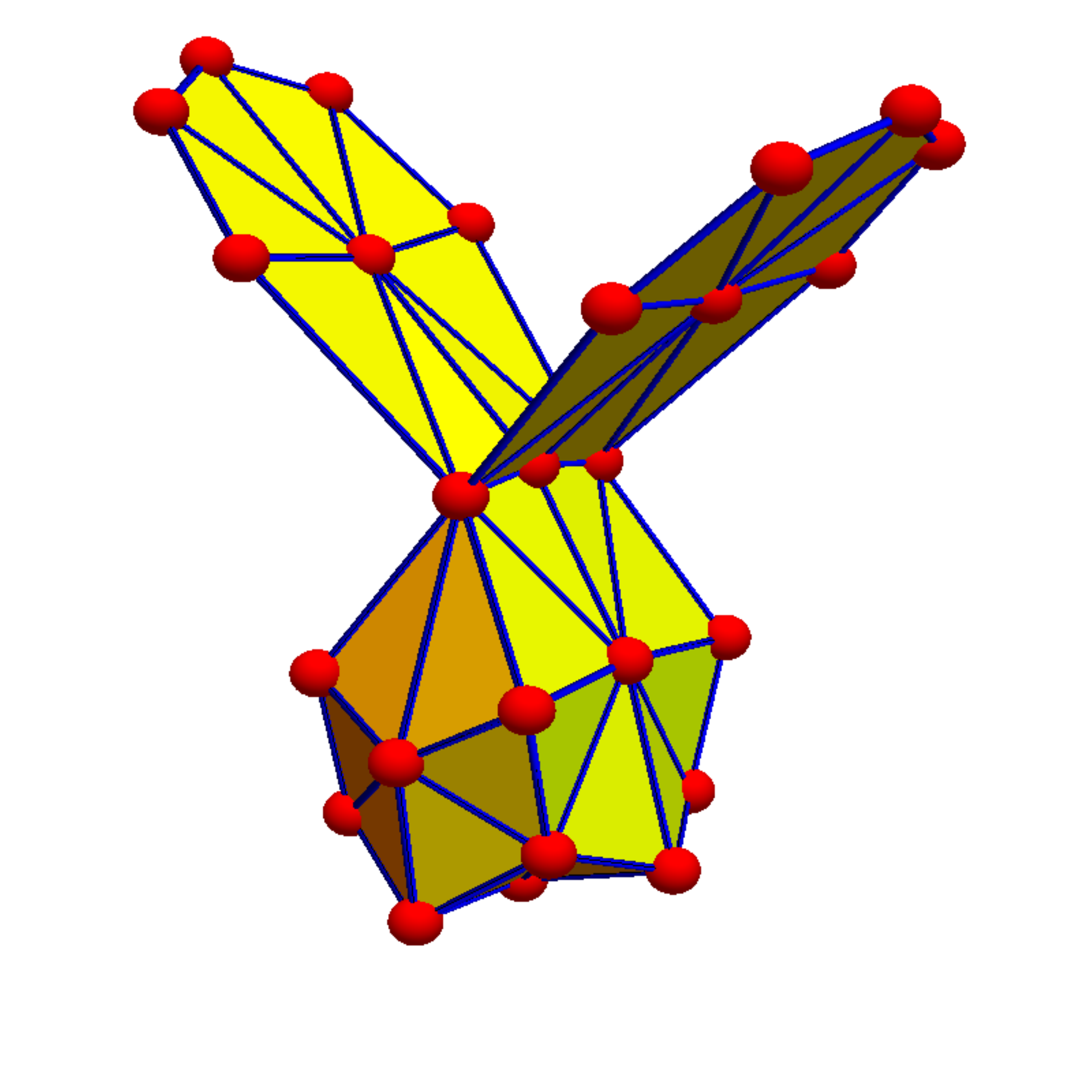}} 
\scalebox{0.18}{\includegraphics{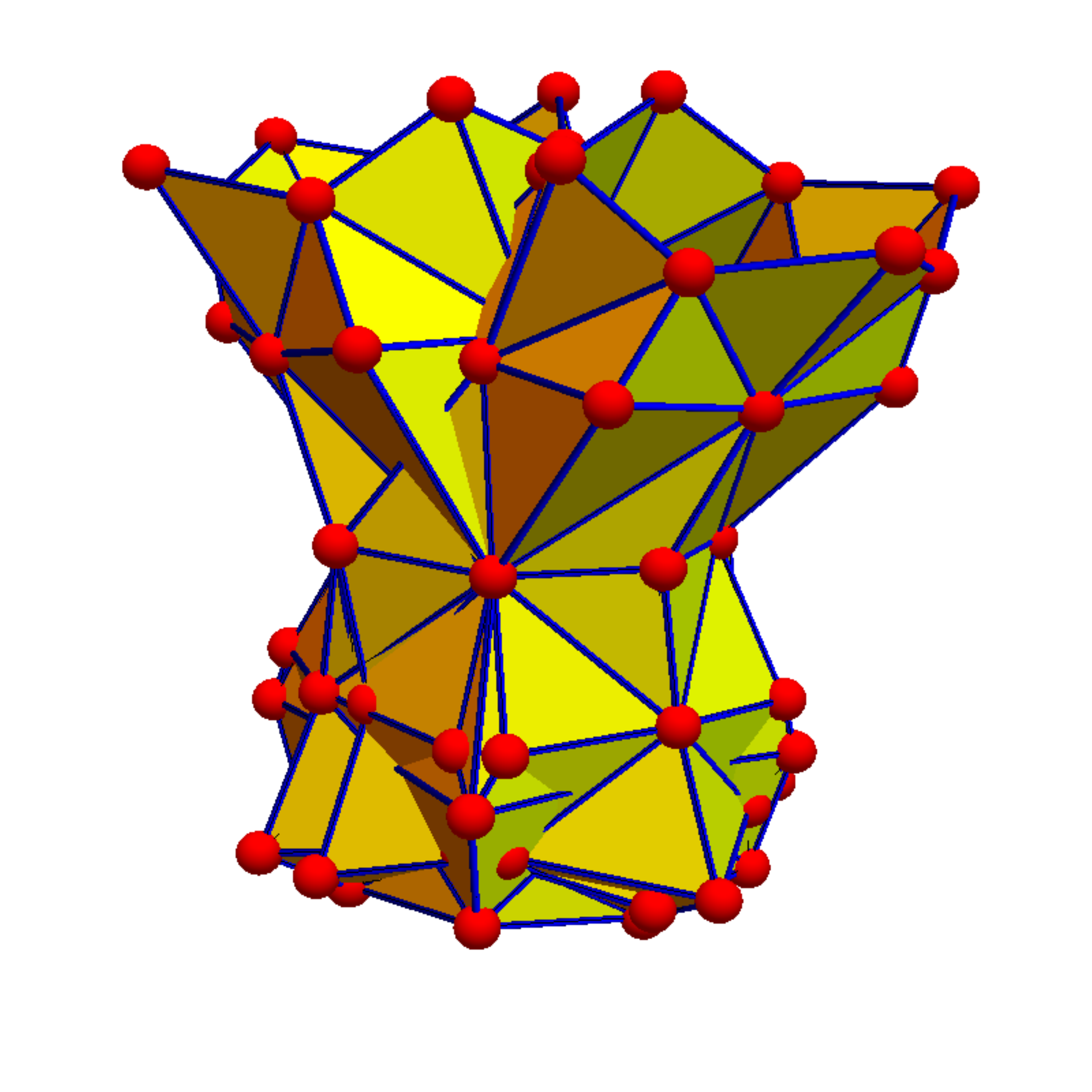}} \\
{\bf 9.} {\bf 3D rabbit} $R \times K_2$ with $\omega=-3$, ${\rm dim}=2.59433 \geq {\rm dim}(R)+1=2.4$.\\
{\bf 10.} curled rabit $G=R \times C_4$ with $\chi(G)=\omega(G)=0$, $b_0=1,b_1=1$. 
\end{center}

{\bf Calculus} \\
Fix an orientation on simplices. The exterior derivatives are the {\bf gradient}
$$ {\rm grad} = d_0 = \left[
                  \begin{array}{ccccc}
                   -1 & 0 & 1 & 0 & 0 \\
                   -1 & 1 & 0 & 0 & 0 \\
                   0 & -1 & 1 & 0 & 0 \\
                   0 & 0 & -1 & 1 & 0 \\
                   0 & 0 & -1 & 0 & 1 \\
                  \end{array}
                  \right] \; ,$$
the {\bf curl }
$$ {\rm curl} = d_1 = \left[
                  \begin{array}{ccccc}
                   1 & -1 & -1 & 0 & 0 \\
                  \end{array}
                  \right] $$
which satisfies ${\rm curl(grad}) = 0$ and the {\bf divergence }
$$  {\rm div} = d_0^* = \left[
                  \begin{array}{ccccc}
                   -1 & -1 & 0 & 0 & 0 \\
                   0 & 1 & -1 & 0 & 0 \\
                   1 & 0 & 1 & -1 & -1 \\
                   0 & 0 & 0 & 1 & 0 \\
                   0 & 0 & 0 & 0 & 1 \\
                  \end{array}
                  \right] \; . $$
The {\bf scalar Laplacian} or {\bf Kirchhoff matrix} $L_0 = d_0^* d_0 = {\rm div}({\rm grad})$ is
$$ L_0 =   \left[
                  \begin{array}{ccccc}
                   2 & -1 & -1 & 0 & 0 \\
                   -1 & 2 & -1 & 0 & 0 \\
                   -1 & -1 & 4 & -1 & -1 \\
                   0 & 0 & -1 & 1 & 0 \\
                   0 & 0 & -1 & 0 & 1 \\
                  \end{array} \right] \; . $$
It does not depend on the chosen orientation on simplices and is equal to $B-A$, where $B$ 
is the diagonal matrix containing the {\bf vertex degrees} and $A$ is the
{\bf adjacency matrix} of $G$. Has eigenvalues $\sigma(L_0) = \{ 5,3,1,1,0 \}$ with one-dimensional kernel spanned by 
$[1,1,1,1,1]^T$. Also the {\bf 1-form Laplacian} $L_1 = d_1^* d_1 + d_0 d_0^*$ does not depend on the
orientation of the simplices:
$$ L_1 = \left[
                  \begin{array}{ccccc}
                   3 & 0 & 0 & -1 & -1 \\
                   0 & 3 & 0 & 0 & 0 \\
                   0 & 0 & 3 & -1 & -1 \\
                   -1 & 0 & -1 & 2 & 1 \\
                   -1 & 0 & -1 & 1 & 2 \\
                  \end{array}
                  \right]  $$
with eigenvalues $\sigma(L_1) = \{ 5,3,3,1,1 \}$ with zero 
dimensional kernel. This reflects that the graph is {\bf simply connected}. \\

The $2$-form Laplacian $d_1 d_1^*$ 
$$ L_3  = \left[ \begin{array}{c} 3  \end{array} \right] \; $$
has a single eigenvalue $\{ 3 \}$. \\

{\bf Dirac operator} $D=d+d^*$ is
\begin{tiny}
$$ D = \left[
                  \begin{array}{ccccccccccc}
                   0 & 0 & 0 & 0 & 0 & -1 & -1 & 0 & 0 & 0 & 0 \\
                   0 & 0 & 0 & 0 & 0 & 0 & 1 & -1 & 0 & 0 & 0 \\
                   0 & 0 & 0 & 0 & 0 & 1 & 0 & 1 & -1 & -1 & 0 \\
                   0 & 0 & 0 & 0 & 0 & 0 & 0 & 0 & 1 & 0 & 0 \\
                   0 & 0 & 0 & 0 & 0 & 0 & 0 & 0 & 0 & 1 & 0 \\
                   -1 & 0 & 1 & 0 & 0 & 0 & 0 & 0 & 0 & 0 & 1 \\
                   -1 & 1 & 0 & 0 & 0 & 0 & 0 & 0 & 0 & 0 & -1 \\
                   0 & -1 & 1 & 0 & 0 & 0 & 0 & 0 & 0 & 0 & -1 \\
                   0 & 0 & -1 & 1 & 0 & 0 & 0 & 0 & 0 & 0 & 0 \\
                   0 & 0 & -1 & 0 & 1 & 0 & 0 & 0 & 0 & 0 & 0 \\
                   0 & 0 & 0 & 0 & 0 & 1 & -1 & -1 & 0 & 0 & 0 \\
                  \end{array} \right] \; . $$
\end{tiny}

The form Laplacian or {\bf Laplace Beltrami operator} $ L=D^2 = (d+d^*)^* = d d^* + d^* d$ is
\begin{tiny}
$$ L  = \left[ \begin{array}{ccccccccccc}
                   2 & -1 & -1 & 0 & 0 & 0 & 0 & 0 & 0 & 0 & 0 \\
                   -1 & 2 & -1 & 0 & 0 & 0 & 0 & 0 & 0 & 0 & 0 \\
                   -1 & -1 & 4 & -1 & -1 & 0 & 0 & 0 & 0 & 0 & 0 \\
                   0 & 0 & -1 & 1 & 0 & 0 & 0 & 0 & 0 & 0 & 0 \\
                   0 & 0 & -1 & 0 & 1 & 0 & 0 & 0 & 0 & 0 & 0 \\
                   0 & 0 & 0 & 0 & 0 & 3 & 0 & 0 & -1 & -1 & 0 \\
                   0 & 0 & 0 & 0 & 0 & 0 & 3 & 0 & 0 & 0 & 0 \\
                   0 & 0 & 0 & 0 & 0 & 0 & 0 & 3 & -1 & -1 & 0 \\
                   0 & 0 & 0 & 0 & 0 & -1 & 0 & -1 & 2 & 1 & 0 \\
                   0 & 0 & 0 & 0 & 0 & -1 & 0 & -1 & 1 & 2 & 0 \\
                   0 & 0 & 0 & 0 & 0 & 0 & 0 & 0 & 0 & 0 & 3 \\
                  \end{array} \right] $$
\end{tiny}

{\bf Partial difference equations} \\
Any continuum PDE can be considered on graphs. They work especially on the rabbit.
The Laplacian $L=(d+d^*)^2$ can be either for the exterior derivative $d$ for forms or 
quadratic forms. Let $\lambda_k$ denote the {\bf eigenvalues} of the form Laplacian $L$: \\

The {\bf Wave equation}
$$ u_{tt} = L u \;  $$
has {\bf d'Alembert solution} $u(t) = \cos(Dt) u(0) + \sin(Dt) D^{-1} u'(0)$.  Using
$u(0) = \sum_n u_n f_n$ and $v(0) = u'(0) = \sum_n v_n f_n$ with {\bf eigenvectors} $f_n$ of
$L$, one has the solution
$$ u(t) = \sum_n \cos(\sqrt{\lambda_n} t) u_n f_n + \sin(\sqrt{\lambda_n} t) v_n f_n/\sqrt{\lambda_n} \; . $$

The {\bf Heat equation}
$$ u_t = L u  \;  $$
has the solution $u(t) = e^{-L t} u(0)$ or $\sum_n e^{-\lambda_k t} u_n f_n$,
where $u(0) = \sum_n u_n f_n$ is the eigenvector expansion of $u(0)$ with
respect to eigenvectors $f_n$ of $L$. \\

The {\bf Poisson equation}
$$ L u = v  $$
has the solution $u= L^{-1} v$, where $L^{-1}$ is the pseudo inverse of $L$. We can also assume $v$ to 
be perpendicular to the kernel of $L$. \\

The {\bf Laplace equation} 
$$ L u = 0 \;  $$
has on the scalar sector $\Omega^0$ only the locally constant functions as solutions. In general, these
are {\bf harmonic functions}. \\

{\bf Maxwell equations}
$$  dF = 0, d^* F = j $$
for a 2-form $F$ called {\bf electromagnetic field}.
In the case of the rabbit, $F$ is a number attached to the triangle.
$j$, the {\bf current} is a function assigned to edges. If $F=dA$ with a {\bf vector potential} $A$.
If $A$ is Coulomb gauged so that $d^* A = 0$, then we get the Poisson equation for one forms
$$ L_1 A = j  \; . $$
This solves the problem to find the electromagnetic field $F=dA$ from the current $j$. In the rabbit
case, since it is simply connected, there is no kernel of $L_1$ and we can just invert the matrix. \\

The electromagnetic field $F=dA$ defined on the rabbit is a number attached to the triangle:
\begin{center} {\bf Let there be light! } \end{center} 

\pagebreak

\section*{Appendix: an announcement}

{\bf A case study in Interaction cohomology \hfill Oliver Knill, 3/18/2016}. \cite{CaseStudy2016}.

\ssection{Simplicial Cohomology}
{\bf Simplicial cohomology} is defined by an {\bf exterior derivative} $dF(x)=F(dx)$
on {\bf valuation forms} $F(x)$ on subgraphs $x$ of a {\bf finite simple graph} $G$, where $dx$ is the 
{\bf boundary chain} of a simplex $x$. Evaluation $F(A)$ is {\bf integration} and $dF(A)=F(dA)$ is {\bf Stokes}.
Since $d^2=0$, the kernel of $d_p: \Omega^p \to \Omega^{p+1}$ contains the image of $d_{p-1}$. 
The vector space $H^p(G) = {\rm ker}(d_p)/{\rm im}(d_{p-1})$ is the $p$'th {\bf simplicial cohomology} 
of $G$. The {\bf Betti numbers} $b_p(G)={\rm dim}(H^p(G))$ define $\sum_p (-1)^p b_p$ which is
{\bf Euler characteristic} $\chi(G) = \sum_x (-1)^{{\rm dim}(x)}$, summing 
over all complete subgraphs $x$ of $G$. If $T$ is an automorphism of $G$, the {\bf Lefschetz number}, 
the super trace $\chi_T(G)$ of the induced map $U_T$ on $H^p(G)$
is equal to the sum $\sum_{T(x)=x} i_T(x)$, where $i_T(x)=(-1)^{{\rm dim}(x)} {\rm sign}(T|x)$
is the {\bf Brouwer index}. This is the {\bf Lefschetz fixed point theorem}. The {\bf Poincar\'e polynomial}
$p_G(x)=\sum_{k=0} {\rm dim}(H^k(G)) x^k$ satisfies $p_{G \times H}(x) = p_G(x) p_H(x)$ and $\chi(G)=p_G(-1)$
so that $\chi(G \times H) = \chi(G) \cdot \chi(H)$. For $T=Id$, the Lefschetz formula is {\bf Euler-Poincar\'e}.
With the {\bf Dirac operator} $D=d+d^*$ and {\bf Laplacian} $L=D^2$, discrete {\bf Hodge} tells that
$b_p(G)$ is the nullity of $L$ restricted to $p$-forms.
By {\bf McKean Singer super symmetry}, the positive Laplace spectrum on even-forms is the 
positive Laplace spectrum 
on odd-forms. The super trace ${\rm str}(L^k)$ is therefore zero for $k>0$ and $l(t)={\rm str}(\exp(-tL) U_T)$ 
with {\bf Koopman operator} $U_Tf=f(T)$ is $t$-invariant. This heat flow argument proves Lefschetz because
$l(0) = {\rm str}(U_T)$ is $\sum_{T(x)=x} i_T(x)$ and $\lim_{t \to \infty} l(t)=\chi_T(G)$ by Hodge.

\ssection{Interaction Cohomology}

Super counting ordered pairs of intersecting simplices $(x,y)$ gives the
{\bf Wu characteristic} $\omega(G)=\sum_{x \sim y} (-1)^{{\rm dim}(x)+{\rm dim}(y)}$. Like Euler
characteristic it is {\bf multiplicative} $\omega(G \times H) = \omega(G) \omega(H)$ 
and satisfies {\bf Gauss-Bonnet} and {\bf Poincar\'e-Hopf}. {\bf Quadratic interaction cohomology} 
is defined by the {\bf exterior derivative} 
$dF(x,y) = F(dx,y) + (-1)^{{\rm dim}(x)} F(x,dy)$ on functions $F$
on ordered pairs $(x,y)$ of intersecting simplices in $G$. If $b_p$ are the Betti numbers
of these interaction cohomology groups, then $\omega(G)=\sum_p (-1)^p b_p$ and
the {\bf Lefschetz formula} $\chi_T(G)=\sum_{(x,y)=(T(x),T(y))} i_T(x,y)$ holds
where $\chi_T(G)$ is the {\bf Lefschetz number}, the super trace of $U_T$ on cohomology
and $i_T(x,y) = (-1)^{{\rm dim}(x) + {\rm dim}(y)} {\rm sign}(T|x) {\rm sign}(T|y)$ is
the {\bf Brouwer index} introduced in the discrete in \cite{brouwergraph}. 
The heat proof works too. 
The {\bf interaction Poincar\'e polynomial} $p_G(x)=\sum_{k=0} {\rm dim}(H^k(G)) x^k$ again
satisfies $p_{G \times H}(x) = p_G(x) p_H(x)$.

\ssection{The Cylinder and the M\"oebius strip}

The cylinder $G$ and M\"obius strip $H$ are homotopic but not homeomorphic. As simplicial
cohomology is a homotopy invariant, it can not distinguish $H$ and $G$ and
$p_G(x)=p_H(x)$. But interaction cohomology can see it. The interaction Poincar\'e
polynomials of $G$ and $H$ are \fbox{$p_G(x)=x^2+x^3$} and \fbox{$p_H(x)=0$}.
Like {\bf Stiefel-Whitney classes}, interaction cohomology can distinguish the graphs.
While Stiefel-Whitney is defined for vector bundles, interaction cohomologies are defined
for all finite simple graphs. As it is invariant under Barycentric refinement 
$G \to G_1 = G \times K_1$, the cohomology works for continuum geometries like
manifolds or varieties. 

\begin{center} \parbox{16.8cm}{
\parbox{4cm}{
\scalebox{0.06}{\includegraphics{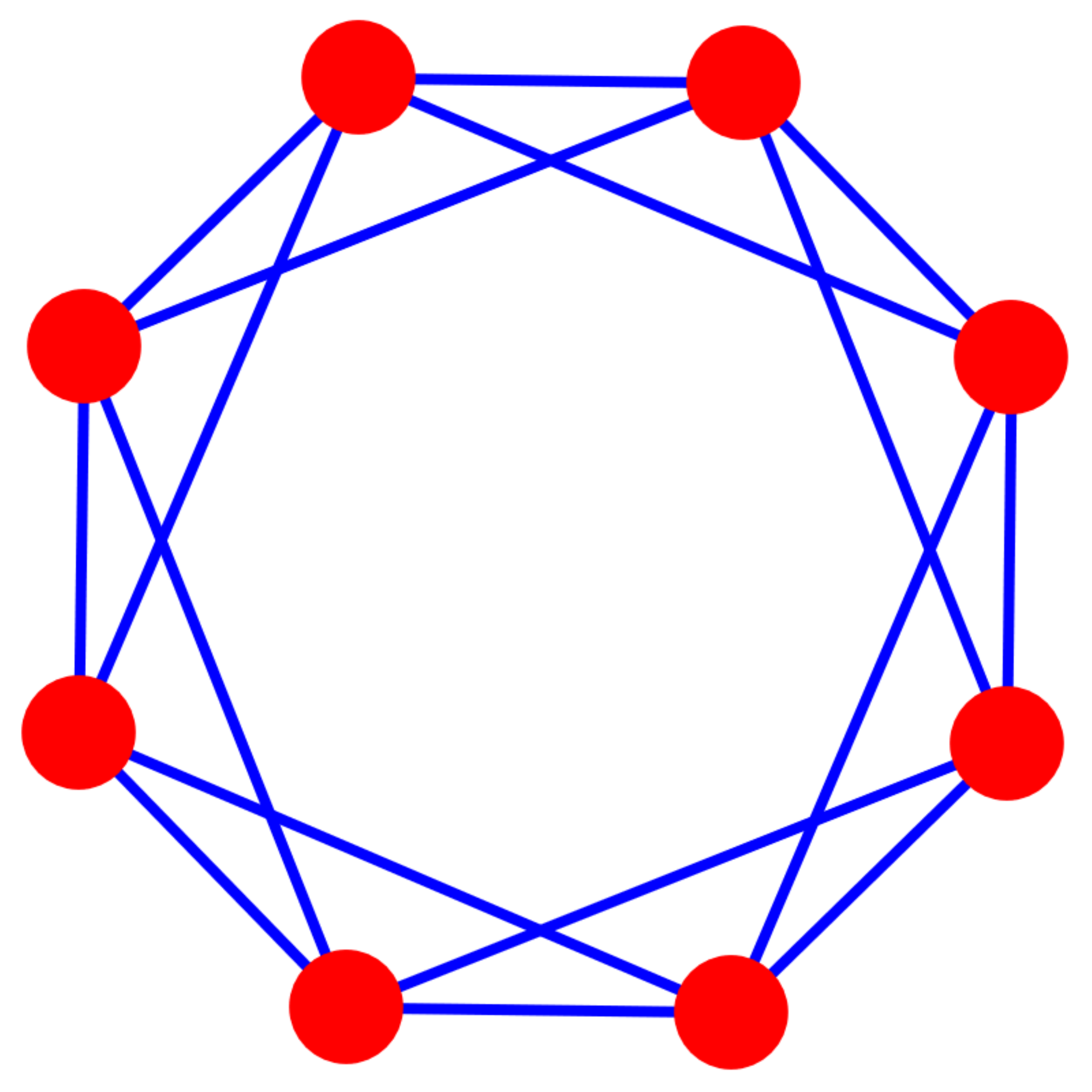}} 
\scalebox{0.10}{\includegraphics{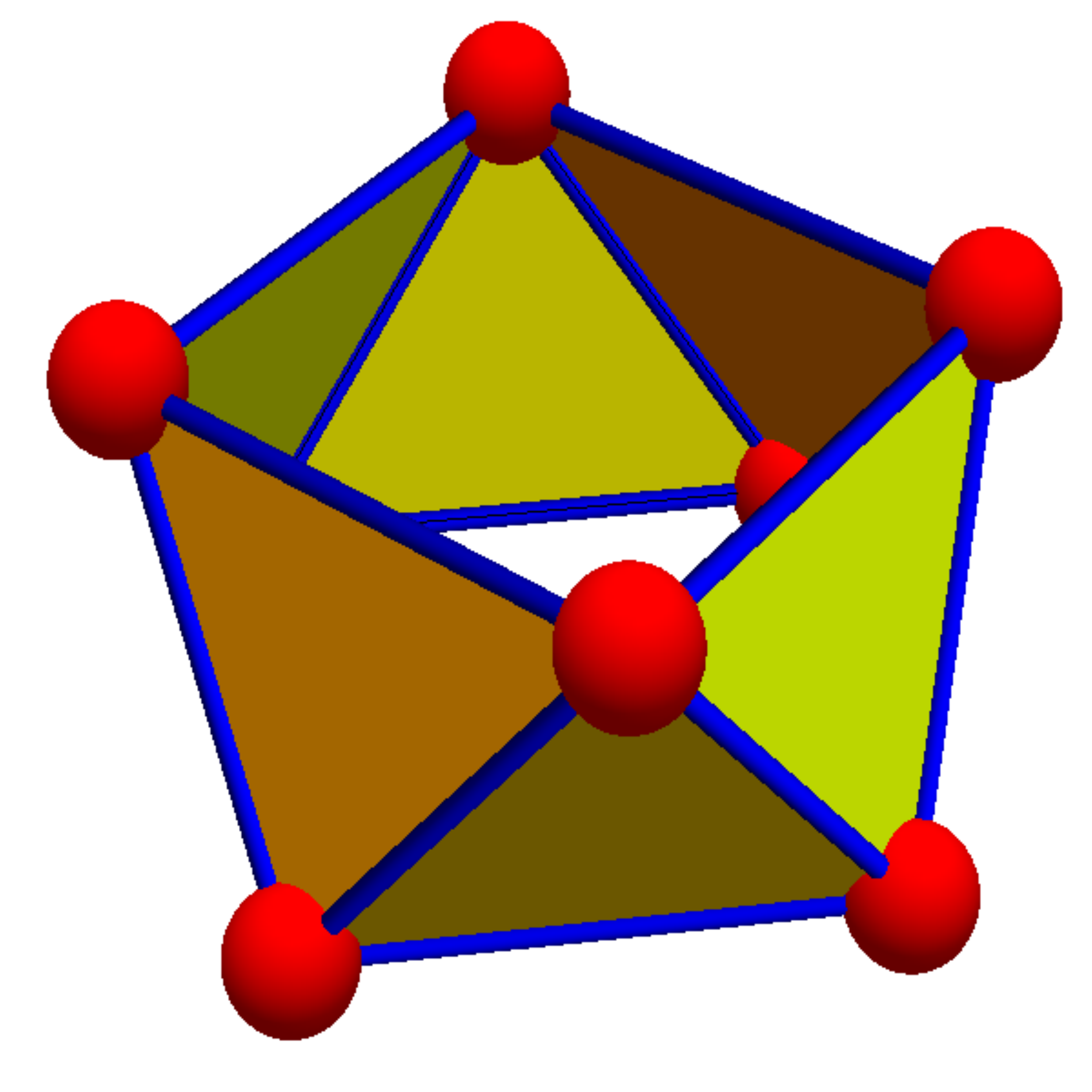}} 
}
\parbox{3.6cm}{
\begin{tiny}
The cylinder $G$ is an orientable graph with $\delta(G) = C_4 \dot{\cup} C_4$.
\end{tiny}
} \parbox{0.7cm}{ \hspace{0.7cm}}
\parbox{3.6cm}{
\begin{tiny}
The M\"obius strip $H$ is non-orientable with $\delta(H)=C_7$. 
\end{tiny}
}
\parbox{4cm}{
\scalebox{0.10}{\includegraphics{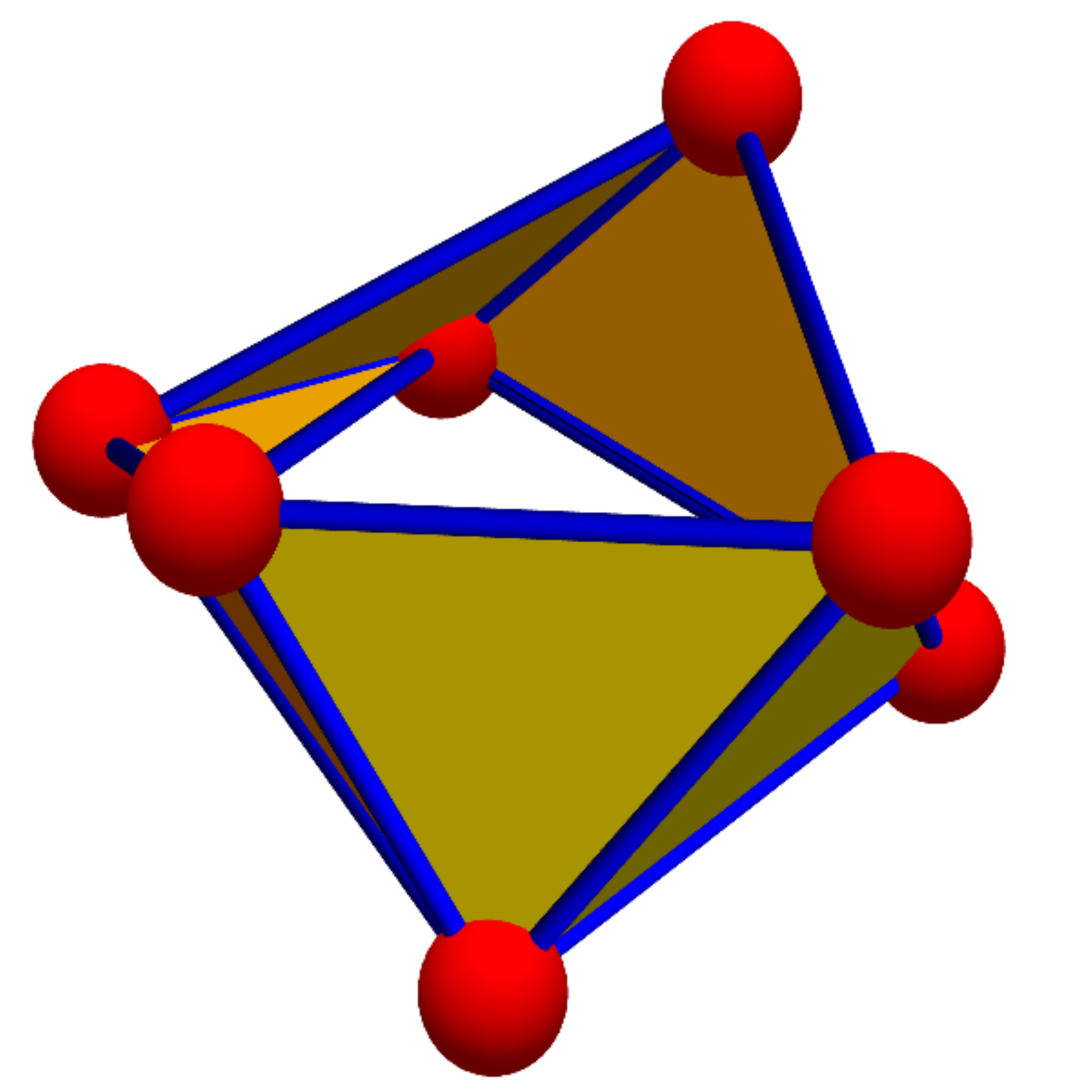}}
\scalebox{0.06}{\includegraphics{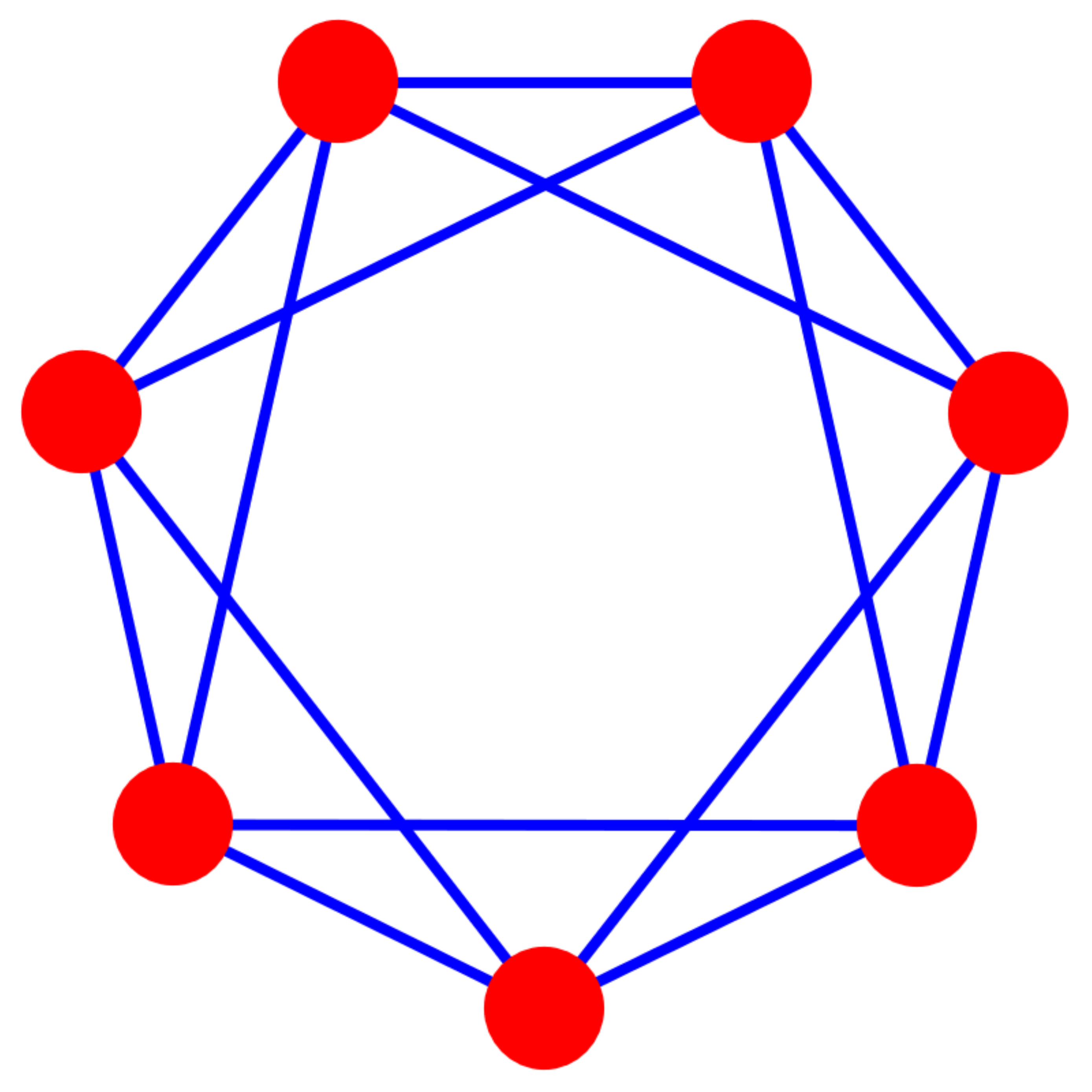}}\\
}
}
\end{center}

\ssection{Classical Calculus} 

Calculus on graphs either deals with {\bf valuations} or {\bf form valuations}. Of particular interest are
{\bf invariant linear valuations}, maps $X$ on non-oriented subgraphs $A$ of $G$ satisfying 
the {\bf valuation property} $X(A \cup B) + X(A \cap B) = X(A)+X(B)$ and $X(\emptyset)=0$ and
$X(A)=X(B)$ if $A$ and $B$ are isomorphic subgraphs. We don't assume invariance in general.
By {\bf discrete Hadwiger}, the vector space of 
invariant linear valuations has dimension $d+1$, where $d+1$ is the {\bf clique number}, the cardinality of the
vertex set a complete subgraph of $G$ can have. Linear invariant valuations are of the form $\sum_{k=0}^d X_k v_k(G)$,
where $v(G)=(v_0,v_1,\dots,v_d)$ is the $f$-vector of $G$. Examples of invariant valuations are $\chi(G)$ or $v_k(G)$
giving the number of $k$-simplices in $G$. An example of a non-invariant valuation is ${\rm deg}_a(A)$
giving the number of edges in $A$ hitting a vertex $a$. 
To define {\bf form valuations} which implements a discrete "integration of differential forms", 
one has to orient first simplices in $G$. No compatibility is required so that any graph admits 
a choice of orientation. The later is irrelevant for interesting quantities like cohomology.
A {\bf form valuation} $X$ is a function on oriented subgraphs of $G$ such that the valuation property holds and
$X(A) =-X(\overline{A})$ if $\overline{A}$ is the graph $A$ in which all orientations are reversed. 
Form valuations in particular change sign if a simplex changes orientation and when supported on $p$-simplices
play the role of $p$-forms. The defining identity $dF(A)=F(dA)$ is already Stokes theorem. If $A$ is a 
discrete $p$ surface made up of $(p+1)$-simplices with cancelling orientation so that $dA$ is a $p$-graph, then
this discretizes the continuum Stokes theorem but the result holds for all subgraphs of any graph
if one extends $F$ to {\bf chains} over $G$. For example, if $f_G=a b c + a b + b c + c a + a + b +c$ is the 
algebraic description of a triangle then $f_{dG} = b c - a c + a b + b - a + c -b  + a-c = a b +b c - a c$ is
only a chain. With the orientation $f_G=a b c + b a + b c + c a + a + b +c$ we would have got additionally 
the terms $2a-2b$. The vector space $\Omega^p(G)$ of {\bf all form valuations} on $G$ has dimension $v_p(G)$ as
we just have to give a value to each $p$-simplex to define $F$.  \\

We use a cylinder $G$, with $f$-vector $(8,16,8)$ which super sums to the Euler characteristic $\chi(G)=0$. 
An orientation on facets is fixed by giving graph algebraically like with 
$f_G = a + b + a b + c + b c + d + a d + c d + e + a e + b e + a b e + f + b f + c f + b c f + 
e f + b e f + g + c g + d g + c d g + f g + c f g + h + a h + d h + a d h + e h + a e h + g h + d g h$.
The automorphism group $A=D_8$ of $G$ has 16 elements. The Lefschetz numbers of the transformations are
$(0, 2, 2, 0, 2, 0, 0, 2, 0, 2, 2, 0, 2, 0, 0, 2)$. The average Lefschetz number is $\chi(G/A)=1$. 
The {\bf gradient} $d_0$ and {\bf curl} $d_1$ are
\begin{tiny}
$$ d_0=\left[ \begin{array}{cccccccc}
                   -1 & 1 & 0 & 0 & 0 & 0 & 0 & 0 \\
                   -1 & 0 & 0 & 1 & 0 & 0 & 0 & 0 \\
                   -1 & 0 & 0 & 0 & 1 & 0 & 0 & 0 \\
                   -1 & 0 & 0 & 0 & 0 & 0 & 0 & 1 \\
                   0 & -1 & 1 & 0 & 0 & 0 & 0 & 0 \\
                   0 & -1 & 0 & 0 & 1 & 0 & 0 & 0 \\
                   0 & -1 & 0 & 0 & 0 & 1 & 0 & 0 \\
                   0 & 0 & -1 & 1 & 0 & 0 & 0 & 0 \\
                   0 & 0 & -1 & 0 & 0 & 1 & 0 & 0 \\
                   0 & 0 & -1 & 0 & 0 & 0 & 1 & 0 \\
                   0 & 0 & 0 & -1 & 0 & 0 & 1 & 0 \\
                   0 & 0 & 0 & -1 & 0 & 0 & 0 & 1 \\
                   0 & 0 & 0 & 0 & -1 & 1 & 0 & 0 \\
                   0 & 0 & 0 & 0 & -1 & 0 & 0 & 1 \\
                   0 & 0 & 0 & 0 & 0 & -1 & 1 & 0 \\
                   0 & 0 & 0 & 0 & 0 & 0 & -1 & 1 \\
                  \end{array}
                  \right],
     d_1=\left[ 
                  \begin{array}{cccccccccccccccc}
                   -1 & 0 & 1 & 0 & 0 & -1 & 0 & 0 & 0 & 0 & 0 & 0 & 0 & 0 & 0 & 0 \\
                   0 & -1 & 0 & 1 & 0 & 0 & 0 & 0 & 0 & 0 & 0 & -1 & 0 & 0 & 0 & 0 \\
                   0 & 0 & -1 & 1 & 0 & 0 & 0 & 0 & 0 & 0 & 0 & 0 & 0 & -1 & 0 & 0 \\
                   0 & 0 & 0 & 0 & -1 & 0 & 1 & 0 & -1 & 0 & 0 & 0 & 0 & 0 & 0 & 0 \\
                   0 & 0 & 0 & 0 & 0 & -1 & 1 & 0 & 0 & 0 & 0 & 0 & -1 & 0 & 0 & 0 \\
                   0 & 0 & 0 & 0 & 0 & 0 & 0 & -1 & 0 & 1 & -1 & 0 & 0 & 0 & 0 & 0 \\
                   0 & 0 & 0 & 0 & 0 & 0 & 0 & 0 & -1 & 1 & 0 & 0 & 0 & 0 & -1 & 0 \\
                   0 & 0 & 0 & 0 & 0 & 0 & 0 & 0 & 0 & 0 & -1 & 1 & 0 & 0 & 0 & -1 \\
                  \end{array} \right] \; . $$
\end{tiny}
The Laplacian $L=(d+d^*)^2$ has blocks $L_0=d_0^* d_0$, which is the Kirchhoff matrix.
The form Laplacian $L_1=d_0 d_0^* + d_1^* d_1$ is a matrix on $1$-forms. Together with 
the form Laplacian  $L_2 = d_1 d_1^*$, the matrix $L$ is
\begin{tiny}
$$  \arraycolsep=2pt\def\arraystretch{1.0}
   \left[
      \begin{array}{cccccccccccccccccccccccccccccccc}
0&-1&4&-1&0&-1&-1&0&0&0&0&0&0&0&0&0&0&0&0&0&0&0&0&0&0&0&0&0&0&0&0&0\\
-1&0&-1&4&0&0&-1&-1&0&0&0&0&0&0&0&0&0&0&0&0&0&0&0&0&0&0&0&0&0&0&0&0\\
-1&-1&0&0&4&-1&0&-1&0&0&0&0&0&0&0&0&0&0&0&0&0&0&0&0&0&0&0&0&0&0&0&0\\
0&-1&-1&0&-1&4&-1&0&0&0&0&0&0&0&0&0&0&0&0&0&0&0&0&0&0&0&0&0&0&0&0&0\\
0&0&-1&-1&0&-1&4&-1&0&0&0&0&0&0&0&0&0&0&0&0&0&0&0&0&0&0&0&0&0&0&0&0\\
-1&0&0&-1&-1&0&-1&4&0&0&0&0&0&0&0&0&0&0&0&0&0&0&0&0&0&0&0&0&0&0&0&0\\
0&0&0&0&0&0&0&0&3&1&0&1&-1&0&-1&0&0&0&0&0&0&0&0&0&0&0&0&0&0&0&0&0\\
0&0&0&0&0&0&0&0&1&3&1&0&0&0&0&1&0&0&-1&0&0&0&0&0&0&0&0&0&0&0&0&0\\
0&0&0&0&0&0&0&0&0&1&4&0&0&0&0&0&0&0&0&0&-1&0&0&0&0&0&0&0&0&0&0&0\\
0&0&0&0&0&0&0&0&1&0&0&4&0&0&0&0&0&0&0&0&0&0&0&1&0&0&0&0&0&0&0&0\\
0&0&0&0&0&0&0&0&-1&0&0&0&3&1&0&-1&0&-1&0&0&0&0&0&0&0&0&0&0&0&0&0&0\\
0&0&0&0&0&0&0&0&0&0&0&0&1&4&0&0&0&0&0&0&0&-1&0&0&0&0&0&0&0&0&0&0\\
0&0&0&0&0&0&0&0&-1&0&0&0&0&0&4&0&0&0&0&0&0&0&-1&0&0&0&0&0&0&0&0&0\\
0&0&0&0&0&0&0&0&0&1&0&0&-1&0&0&3&1&0&0&-1&0&0&0&0&0&0&0&0&0&0&0&0\\
0&0&0&0&0&0&0&0&0&0&0&0&0&0&0&1&4&0&0&0&1&0&0&0&0&0&0&0&0&0&0&0\\
0&0&0&0&0&0&0&0&0&0&0&0&-1&0&0&0&0&4&0&0&0&0&0&-1&0&0&0&0&0&0&0&0\\
0&0&0&0&0&0&0&0&0&-1&0&0&0&0&0&0&0&0&4&0&0&0&1&0&0&0&0&0&0&0&0&0\\
0&0&0&0&0&0&0&0&0&0&0&0&0&0&0&-1&0&0&0&4&0&1&0&0&0&0&0&0&0&0&0&0\\
0&0&0&0&0&0&0&0&0&0&-1&0&0&0&0&0&1&0&0&0&3&1&-1&0&0&0&0&0&0&0&0&0\\
0&0&0&0&0&0&0&0&0&0&0&0&0&-1&0&0&0&0&0&1&1&3&0&1&0&0&0&0&0&0&0&0\\
0&0&0&0&0&0&0&0&0&0&0&0&0&0&-1&0&0&0&1&0&-1&0&3&-1&0&0&0&0&0&0&0&0\\
0&0&0&0&0&0&0&0&0&0&0&1&0&0&0&0&0&-1&0&0&0&1&-1&3&0&0&0&0&0&0&0&0\\
0&0&0&0&0&0&0&0&0&0&0&0&0&0&0&0&0&0&0&0&0&0&0&0&3&0&-1&0&1&0&0&0\\
0&0&0&0&0&0&0&0&0&0&0&0&0&0&0&0&0&0&0&0&0&0&0&0&0&3&1&0&0&0&0&-1\\
0&0&0&0&0&0&0&0&0&0&0&0&0&0&0&0&0&0&0&0&0&0&0&0&-1&1&3&0&0&0&0&0\\
0&0&0&0&0&0&0&0&0&0&0&0&0&0&0&0&0&0&0&0&0&0&0&0&0&0&0&3&1&0&1&0\\
0&0&0&0&0&0&0&0&0&0&0&0&0&0&0&0&0&0&0&0&0&0&0&0&1&0&0&1&3&0&0&0\\
0&0&0&0&0&0&0&0&0&0&0&0&0&0&0&0&0&0&0&0&0&0&0&0&0&0&0&0&0&3&1&1\\
0&0&0&0&0&0&0&0&0&0&0&0&0&0&0&0&0&0&0&0&0&0&0&0&0&0&0&1&0&1&3&0\\
0&0&0&0&0&0&0&0&0&0&0&0&0&0&0&0&0&0&0&0&0&0&0&0&0&-1&0&0&0&1&0&3\\
   \end{array} \right]  \; . $$
\end{tiny}

For the M\"obius strip $H$ with $f$-vector $(7,14,7)$ 
we could make an edge refinement to get a graph $\tilde{H}$ with 
$f$-vector $(8,16,8)$ 
which after an edge collapse goes back to $H$. The graphs $H$ and $\tilde{H}$
are topologically equivalent in the discrete topological sense (there is a covering for both graphs
for which the nerve graphs are identical and such that the elements as well as their intersections are
all two dimensional.)
Both simplicial as well as quadratic intersection cohomology are the same for $H$ and $\tilde{H}$. The 
graph $H$ has the automorphism group $D_7$ but $\tilde{H}$ only $Z_2$. The Lefschetz
numbers of $H$ are $(0, 2, 2, 0, 2, 0, 2, 0, 2, 0, 2, 0, 0, 2)$. The Lefschetz
numbers of $\tilde{H}$ are $(0,2)$. The automorphism $T=(1, 8, 7, 6, 5, 4, 3, 2)$ which generates
the automorphism group of $\tilde{H}$ has the Lefschetz number $2$. There are $4$ fixed points, they 
are the fixed vertices $1,5$ and the fixed edges $(1,5)$ and $(7,3)$. All Brouwer indices are $1$
except for $(1,5)$ which has index $-1$. (It is not flipped by the transformation and has
dimension $1$). We continue to work with the graph $H$ which has the algebraic 
description $f_H = a + b + a b + c + b c + d + a d + c d + e + a e + b e + a b e + 
d e + a d e + f + b f + c f + b c f + e f + b e f + g + a g + c g + d g + a d g + c d g + f g + c f g$. \\

Here are the gradient and curl of $H$:
\begin{tiny}
$$  
 d_0=\left[  \begin{array}{ccccccc}
                   -1 & 0 & 0 & 0 & 1 & 0 & 0 \\
                   -1 & 0 & 0 & 0 & 0 & 0 & 1 \\
                   -1 & 1 & 0 & 0 & 0 & 0 & 0 \\
                   -1 & 0 & 0 & 1 & 0 & 0 & 0 \\
                   0 & -1 & 0 & 0 & 0 & 1 & 0 \\
                   0 & -1 & 1 & 0 & 0 & 0 & 0 \\
                   0 & -1 & 0 & 0 & 1 & 0 & 0 \\
                   0 & 0 & 1 & -1 & 0 & 0 & 0 \\
                   0 & 0 & 0 & -1 & 1 & 0 & 0 \\
                   0 & 0 & 0 & -1 & 0 & 0 & 1 \\
                   0 & 0 & -1 & 0 & 0 & 0 & 1 \\
                   0 & 0 & -1 & 0 & 0 & 1 & 0 \\
                   0 & 0 & 0 & 0 & -1 & 1 & 0 \\
                   0 & 0 & 0 & 0 & 0 & -1 & 1 \\
                  \end{array} \right] \; , 
 d_1=\left[
                 \begin{array}{cccccccccccccc}
                  1 & 0 & -1 & 0 & 0 & 0 & -1 & 0 & 0 & 0 & 0 & 0 & 0 & 0 \\
                  1 & 0 & 0 & -1 & 0 & 0 & 0 & 0 & -1 & 0 & 0 & 0 & 0 & 0 \\
                  0 & 1 & 0 & -1 & 0 & 0 & 0 & 0 & 0 & -1 & 0 & 0 & 0 & 0 \\
                  0 & 0 & 0 & 0 & 1 & -1 & 0 & 0 & 0 & 0 & 0 & -1 & 0 & 0 \\
                  0 & 0 & 0 & 0 & 1 & 0 & -1 & 0 & 0 & 0 & 0 & 0 & -1 & 0 \\
                  0 & 0 & 0 & 0 & 0 & 0 & 0 & -1 & 0 & 1 & -1 & 0 & 0 & 0 \\
                  0 & 0 & 0 & 0 & 0 & 0 & 0 & 0 & 0 & 0 & 1 & -1 & 0 & -1 \\
                 \end{array} \right]  \; . $$
\end{tiny}
The Hodge Laplacian of $L$ is the $28 \times 28$ matrix (we write $i=-1$
\begin{tiny}
$$   \left[  \begin{array}{cccccccccccccccccccccccccccc}
4&i&0&i&i&0&i&0&0&0&0&0&0&0&0&0&0&0&0&0&0&0&0&0&0&0&0&0\\
i&4&i&0&i&i&0&0&0&0&0&0&0&0&0&0&0&0&0&0&0&0&0&0&0&0&0&0\\
0&i&4&i&0&i&i&0&0&0&0&0&0&0&0&0&0&0&0&0&0&0&0&0&0&0&0&0\\
i&0&i&4&i&0&i&0&0&0&0&0&0&0&0&0&0&0&0&0&0&0&0&0&0&0&0&0\\
i&i&0&i&4&i&0&0&0&0&0&0&0&0&0&0&0&0&0&0&0&0&0&0&0&0&0&0\\
0&i&i&0&i&4&i&0&0&0&0&0&0&0&0&0&0&0&0&0&0&0&0&0&0&0&0&0\\
i&0&i&i&0&i&4&0&0&0&0&0&0&0&0&0&0&0&0&0&0&0&0&0&0&0&0&0\\
0&0&0&0&0&0&0&4&1&0&0&0&0&0&0&0&0&0&0&i&0&0&0&0&0&0&0&0\\
0&0&0&0&0&0&0&1&3&1&0&0&0&0&0&0&0&1&0&0&1&0&0&0&0&0&0&0\\
0&0&0&0&0&0&0&0&1&3&1&i&i&0&0&0&0&0&0&0&0&0&0&0&0&0&0&0\\
0&0&0&0&0&0&0&0&0&1&4&0&0&0&i&0&0&0&0&0&0&0&0&0&0&0&0&0\\
0&0&0&0&0&0&0&0&0&i&0&4&0&0&0&0&0&0&0&0&i&0&0&0&0&0&0&0\\
0&0&0&0&0&0&0&0&0&i&0&0&3&1&1&0&0&i&0&0&0&0&0&0&0&0&0&0\\
0&0&0&0&0&0&0&0&0&0&0&0&1&4&0&1&0&0&0&0&0&0&0&0&0&0&0&0\\
0&0&0&0&0&0&0&0&0&0&i&0&1&0&3&1&0&0&i&0&0&0&0&0&0&0&0&0\\
0&0&0&0&0&0&0&0&0&0&0&0&0&1&1&3&1&0&0&i&0&0&0&0&0&0&0&0\\
0&0&0&0&0&0&0&0&0&0&0&0&0&0&0&1&4&0&0&0&1&0&0&0&0&0&0&0\\
0&0&0&0&0&0&0&0&1&0&0&0&i&0&0&0&0&4&0&0&0&0&0&0&0&0&0&0\\
0&0&0&0&0&0&0&0&0&0&0&0&0&0&i&0&0&0&4&1&0&0&0&0&0&0&0&0\\
0&0&0&0&0&0&0&i&0&0&0&0&0&0&0&i&0&0&1&3&i&0&0&0&0&0&0&0\\
0&0&0&0&0&0&0&0&1&0&0&i&0&0&0&0&1&0&0&i&3&0&0&0&0&0&0&0\\
0&0&0&0&0&0&0&0&0&0&0&0&0&0&0&0&0&0&0&0&0&3&1&0&0&1&0&0\\
0&0&0&0&0&0&0&0&0&0&0&0&0&0&0&0&0&0&0&0&0&1&3&1&0&0&0&0\\
0&0&0&0&0&0&0&0&0&0&0&0&0&0&0&0&0&0&0&0&0&0&1&3&0&0&i&0\\
0&0&0&0&0&0&0&0&0&0&0&0&0&0&0&0&0&0&0&0&0&0&0&0&3&1&0&1\\
0&0&0&0&0&0&0&0&0&0&0&0&0&0&0&0&0&0&0&0&0&1&0&0&1&3&0&0\\
0&0&0&0&0&0&0&0&0&0&0&0&0&0&0&0&0&0&0&0&0&0&0&i&0&0&3&i\\
0&0&0&0&0&0&0&0&0&0&0&0&0&0&0&0&0&0&0&0&0&0&0&0&1&0&i&3\\
\end{array}\right] \; . $$
\end{tiny}
It consists of three blocks. The first, $L_0 = d_0^* d_0$ is a $7 \times 7$ matrix which is the Kirchhoff matrix of $H$. 
The second, $L_1 = d_1^* d_1 + d_0 d_0^*$ is a $14 \times 14$ matrix which describes $1$-forms, functions on ordered edges.
Finally, the third $L_2 = d_1 d_1^*$ deals with $2$-forms, functions on oriented triangles of $H$. Note that an orientation on
the triangles of $H$ is not the same than having $H$ oriented. The later would mean that we find an orientation on triangles
which is compatible on intersections. We know that this is not possible for the strip. The matrices $d_0,d_1$ depend on a choice of 
orientation as usually in linear algebra where the matrix to a transformation depends on a basis. But the Laplacian $L$ does not depend
on the orientation. 

\ssection{Quadratic interaction calculus}

Quadratic interaction calculus is the next case after linear calculus. The later has to a great deal known by
Kirchhoff and definitely by Poincar\'e, who however seems not have defined $D$, the form Laplacian $L$, nor 
have used Hodge or super symmetry relating the spectra on even and odd dimensional forms. Nor did Poincar\'e work
with graphs but with simplicial complexes, a detail which is mainly language as the Barycentric refinement of any
abstract finite simplicial complex is already the Whitney complex of a finite simple graph. 
Quadratic interaction calculus deals with functions $F(x,y)$, where $x,y$ are both subgraphs. It should be seen
as a form version of quadratic valuations \cite{valuation}. The later were defined to be functions 
$X(A,B)$ on pairs of subgraphs of $G$ such that for fixed $B$, the map $A \to X(A,B)$ and for fixed $A$, 
the map $B \to X(A,B)$ are both linear (invariant) valuations. Like for any calculus 
flavor, there is a valuation version which is orientation oblivious and a form valuation version which 
cares about orientations. When fixing one variable, we want $x \to F(x,y)$ or $y \to F(y,x)$ to be {\bf valuations}: 
$F(x,y \cup z) + F(x,y \cap z) = F(x,y)+F(x,z)$ and 
$F(x,y)=-F(x,\overline{y})=-F(\overline{x},y)$ 
if $\overline{y}$ is the graph $y$ for which all orientations are reversed and the union
and intersection of graphs simultaneously intersect the vertex set and edge set. 
Note that we do {\bf not} require $F(x,y)=-F(y,x)$.
But unlike as for quadratic valuations (which are oblivious to orientations), 
we want the functions to change sign if one of the orientations of $x$ or $y$ changes. 
Again, the choice of orientation is just a choice of basis and not relevant for any 
results we care about. Calculus is defined as soon as an exterior derivative is given 
(integration is already implemented within the concept of valuation or form valuation). 
In our case, it is {\bf defined} via Stokes theorem 
$dF(x,y) = F(dx,y) + (-1)^{{\rm dim}(x)} F(x,dy)$ for simplices first and then for general subgraphs $x,y$ 
by the valuation properties. While in linear calculus, integration is the evaluation of 
the valuation on subgraphs, in quadratic interaction calculus, we evaluate=integrate on pairs of subgraphs. 
The reason for the name ``interaction cohomology"
is because for $F$ to be non-zero, the subgraphs $x,y$ are required to be {\bf interacting} meaning having a
non-empty intersection. (The word ``Intersection" would also work but the name ``Intersection cohomology" 
has been taken already in the continuum). We can think about 
$F(A,B)$ as a type of {\bf intersection number} of two oriented interacting subgraphs $A,B$ of the graph $G$
and as $F(A,A)$ as the {\bf self intersection number} $F(A,A)$. An other case is the intersection of 
the diagonal $A=\bigcup_x \{(x,x)\}$ and graph $B=\bigcup_x \{ (x,T(x))\}$ 
in the product $G \times G$ of an automorphism $T$ of $G$. The form version of the Wu intersection number 
$\omega(A,B)$ is then the Lefschetz number $\chi_T(G)$.
A particularly important example of a self-intersection number is the Wu characteristic 
$\omega(G)=\omega(G,G)$ which is quadratic valuation, not a form valuation.
Our motivation to define interaction cohomology was to get a Euler-Poincar\'e formula for
Wu characteristic. It turns out that Euler-Poincar\'e automatically and always generalizes to the
Lefschetz fixed point theorem, as the heat flow argument has shown; Euler-Poincar\'e is 
always just the case in which $T$ is the identity automorphism. \\

Lets look now at {\bf quadratic interaction cohomology} for the cylinder graph $G$ and the
M\"obius graph $H$. We believe this case demonstrates well already how quadratic interaction cohomology 
allows in an algebraic way to distinguish graphs which traditional cohomology can not. 
The $f$-matrices of the graphs are 
$$ V(G) = \left[
               \begin{array}{ccc}
                8 & 32 & 24 \\
                32 & 112 & 72 \\
                24 & 72 & 40 \\
                \end{array}
               \right],
   V(H) =  \left[
                 \begin{array}{ccc}
                  7 & 28 & 21 \\
                  28 & 98 & 63 \\
                  21 & 63 & 35 \\
                 \end{array}
                 \right], 
   V(\tilde{H}) =  \left[
                  \begin{array}{ccc}
                   8 & 32 & 24 \\
                   32 & 114 & 74 \\
                   24 & 74 & 42 \\
                  \end{array}
                  \right]   \; . $$
The $f$-matrix is the matrix $V$ for which $V_{ij}$ counts the number of pairs $(x,y)$, where
$x$ is an $i$-simplex, $y$ is a $j$-simplex and $x,y$ intersect. 
The Laplacian for quadratic intersection cohomology of 
the cylinder is a $416 \times 416$ matrix because there are a total of $416 = \sum_{i,j} V_{ij}$ pairs of simplices 
which do intersect in the graph $G$. 
For the M\"obius strip, it is a $364 \times 364$ matrix which splits into $5$ blocks $L_0,L_1,L_2,L_3,L_4$. 
Block $L_p$ corresponds to the interaction pairs $(x,y)$ for which ${\rm dim}(x)+{\rm dim}(y)$ is equal to $p$. 
The scalar interaction Laplacian $L_0$ for the cylinder is the diagonal matrix ${\rm Diag}(8,8,8,8,8,8,8)$. 
For the M\"obius strip $H$, it is the matrix $L_0(H) = {\rm Diag}(8,8,8,8,8,8)$. 
The diagonal entries of $L_0(H)$ depend only  on the vertex degrees. 
In general, for any graph, if all vertex degrees are positive, then the scalar interaction Laplacian 
$L_0$ of the graph is invertible. 
For the cylinder, the block $L_1$ is an invertible $64 \times 64$ matrix because there are 
$V_{12}+V_{21}=64$ vertex-edge or edge-vertex interactions. Its determinant is 
$2^{58} 3^8 \cdot 5 \cdot 7 \cdot 11 \cdot 13 \cdot 17^8 103^2 373^2 2089^2$. 
For the M\"obius strip, the block $L_1$ is a $56 \times 56$ matrix with determinant
$2^{46} 3^7 5 7^3 17^7 42924041^2$. \\

For the cylinder graph $G$, the block $L_2(G)$ is a $160 \times 160$ matrix, which has a 
$1$-dimensional kernel spanned by the vector
$[$ 0, -5, -5, 10, 10, 2, 2, 5, 0, -5, 2, 10, 10, 2, 5, 0, -5, -2, -2, -10, -10, 5, 5, 0, 2, 10, 
10, 2, -10, 2, 0, 9, -2, 9, 10, -10, -2, -9, 0, -9, -10, 2, -2, -10, 9, 0, 9, -10, -2, -10, -2, 
-9, 0, -2, -9, -10, 2, 10, 10, 2, 0, 5, 5, -2, -10, 9, 0, 9, -10, -2, 2, -10, -9, 0, -2, -9, -10, 
2, 10, -5, 10, 2, 0, 5, -2, 10, -9, 0, -9, 10, -2, 10, -2, 9, 9, 0, 2, -10, -10, -2, -5, -10, -2, 
0, 5, 2, 10, -5, 2, 10, -5, 0, -7, -7, -8, -7, -7, -8, 8, 7, 7, -7, -7, -8, 8, 7, 7, 7, 7, 8, 
-8, -7, -7, 8, 7, 7, -7, 7, -8, -7, -7, 8, -7, -7, 8, -7, 7, 8, -8, 7, -7, -8, 7, 7, 
-8, 7, 7, 8, -7, 7 $]$. This vector is associated to edge-edge and triangle-vertex interactions. 
So far, harmonic forms always had physical relevance. We don't know what it is for this
interaction calculus. \\

On the other hand, for the M\"obius strip $H$, the interaction form Laplacian $L_2(H)$ is a 
$140 \times 140$ matrix which is invertible! Its determinant is 
$2^{42} 3^{14} 5^3 7^6 11 \cdot 17^{14} 11087^2 212633^2 42924041^4$.

The quadratic form Laplacian $L_3$ which describes the edge-triangle interactions. 
For the cylinder $G$, the interaction form Laplacian $L_3(G)$  is a $144 \times 144$ matrix.
It has a $1$-dimensional kernel spanned by the vector 
$[$ 4, 3, -6, -3, 6, 3, 4, -6, 3, -6, -3, -3, 4, -6, 6, 3, 4, -6, -3, -6, 2, -3, -3, 2, -2, 
-3, 2, -3, 3, 2, -2, -3, -2, 3, -3, 2, 6, 6, -4, -3, 3, 3, 2, -2, -3, -2, 3, 3, 2, 6, -3, 6, 
-4, -3, -3, 2, -2, 3, 3, -2, 3, -2, -6, 3, 6, -4, 3, 6, -3, -6, 3, -4, -4, -3, 3, -2, 2, -3, 
-6, 3, 6, -3, -4, -3, 3, -2, 2, -6, -3, -6, 6, 6, 3, -2, 2, -3, 4, 3, -3, -3, 3, -4, 3, -2, 
2, -6, -3, -6, 6, 6, 3, -2, 3, 2, -3, 4, 3, 3, -4, 3, 3, -3, -2, 2, -6, 6, -6, 6, -2, 3, -3, 
2, -3, 4, -3, -6, 6, 3, -2, 3, -3, 2, -3, 4 $]$. \\

The surprise, which led us write this down quickly is that
for the M\"obius strip $H$, the interaction form Laplacian $L_3(H)$ 
is a $126 \times 126$ matrix which is {\bf invertible} and has determinant
$2^{28} 3^7 5^3 7^3 11^2 17^7 11087^4 212633^4 42924041^2$. Since $L_3$ is invertible too,
there is also no cohomological restrictions on the $p=3$ level for the M\"obius strip.
The quadratic form Laplacian is invertible. 
This is in contrast to the cylinder, where cohomological constraints exist on this level.
The interaction cohomology can detect topological features, which simplicial cohomology 
can not. \\

The fact that determinants
of $L_3,L_2$ have common prime factors is not an accident and can be explained by 
{\bf super symmetry}, as the union of nonzero eigenvalues of $L_0,L_2,L_4$ are the same than the union
of the nonzero eigenvalues of $L_1,L_3$. \\

Finally, we look at the $L_4$ block, which describes triangle-triangle interactions. 
For the cylinder $G$, this interaction Laplacian  is a $40 \times 40$ matrix which has
the  determinant $2^{26} 3^3 5 \cdot 11^3 23^2 29^2 71^2 241^2$.
For the M\"obius strip, it is a 
$35 \times 35$ matrix which has determinant 
$2^{11} 5 \cdot 11 \cdot 11087^2 212633^2$.  \\


The derivative $d_0$ for the Moebius strip $H$ is the $56 \times 7$ matrix 
\begin{tiny}
$$ \arraycolsep=1pt \def\arraystretch{1}
    d_0 = \left[
 \right]$
\end{tiny}

It takes as an argument a $3$-form $F$, a function on pairs $(x,y)$ of simplices for which the dimension
adds to three. These are the $63$ intersecting edge-triangle pairs together with the $63$ intersecting triangle-edge pairs,
in total 126 pairs. The derivative result $dF$ is a $4$-form, a function on the $35$ triangle-triangle pairs. 

It is rare to have trivial quadratic cohomology. We let the computer search through Erd\"os-Renyi spaces of
graphs with $10$ vertices and found one in maybe 5000 graphs. Interestingly, so far, all of them were homeomorphic
to the Moebius strip. 

\ssection{Remarks}

\begin{itemize}
\item Quadratic cohomology is the second of an infinite sequence of interaction cohomologies.
In general, we can work with $k$-tuples of pairwise intersecting simplices. The basic numerical
value in the $k$-linear case is the {\bf k'th Wu characteristic} 
$\omega_k(G) = \sum_{x_1 \sim \cdots \sim x_k} (-1)^{\sum_j {\rm dim}(x_j)}$ which
sums over all pairwise intersecting ordered $k$-tuples $(x_1, \dots, x_k)$ of simplices in 
the graph $G$. The example of odd-dimensional discrete manifolds shows that it is possible that
all higher cohomologies depend on simplicial cohomology only. So, even when including all
cohomologies, we never can get a complete set of invariants. There are
always non-homeomorphic graphs with the same higher cohomology data. 
\item In the case of the example used here, the cubic interaction cohomology of the
graph is $(0,0,0,0,1,1,0)$. It is the same than the cubic interaction cohomology
of the M\"obius strip. The Laplacian $L$ of the cohomology already has 3824 entries
in the cylinder case with $|V|=8$ and 3346 entries in the M\"obius case with $|V|=7$. 
After a Barycentric refinement, the cylinder is a graph with 32 vertices and 80 edges. 
The cubic interaction Laplacian is then already a $45280 \times 45280$ matrix.
For the refinment of the M\"obius strip, a graph with 28 vertices and 70 edges, the
cubic interaction Laplacian is a $39620 \times 39620$ matrix. This is already tough for
computers. In the quadratic case, where we have a $2856 \times 2856$ Laplacian matrix in the 
M\"obius case, we can still determine that the determinant is invertible. The determinant
is a 1972 digit number but it appears already too large to be factored (we can factor 
${\rm det}(L_0),{\rm det}(L_1)$ but no more ${\rm det}(L_2)$. 

\begin{tiny}
\begin{tabular}{l}
7530638969874397037979561711563112002450663202144179001869326434789895988079316266055681741641534250\\
5520402569595314660610608902074065506357217964315818517986509791000567001716964026456737170353278527\\
1205791817169519067463372551421515830453988162735079594026130431933358246089334527639943252893043314\\
4536846307940785014527346854658200312809912922056973558229588054325241084931073085380158240709097935\\
6856545344770686703962622087593958650046156181661362355633631125541312189408443468961510149659023008\\
2253124421977553946079895639750091735324856288724091518682261237676362313177970059565027587536004200\\
2138919062124125445019983450555077106254638768447785154161762378024206587870332810577525207756811085\\
2067854380728346698788219529968766187343056661909834655700724253992870078325196771420313519612791286\\
9986050780368228721041434897398832261931316935066219252063684161315984087992975315123939475219156474\\
5064548408545070430211633945500014771746565658431354824735645943447432884777245390819277928091941305\\
0609424952327156527353420951979538240302361150261531075349177730786563980994148788098868370790909985\\
1070271552493935030583189076381732318859268620773387011789160691174099395371231597923911979872060284\\
0306733497401619307415880298487658209948973720609493409838205662853016994078695087814672005466663217\\
9379439733797746978039824274589416892649204500101226831922014937137653898387522575569863167507965108\\
4060326919043324423083543518686472734091371402379539572955982226395668169331555359933747282072900339\\
2881996143606297654761309496728269968651404538957809358589034309430857786052252496945529658134936169\\
5395066531914463700676327141913762262475351302519866878990530107809129987566574701732438272093994454\\
1389573174488922097890193235110489144406702328313353424147603306762836337219491316933490480514757457\\
9714638345039702184738990758949184452076641181016476076100416768684884112135152505650404031544520907\\
559177723438295140827322671712156466607554560000000000000000000000000000\\
\end{tabular}
\end{tiny}

\item The interaction Betti numbers satisfy the Whitney-Cartan type formula
$b_p(G \times H) = \sum_{k+l=p} b_k(G) b_l(H)$ like Stiefel-Whitney classes 
but seem unrelated to Stiefel-Whitney classes. 
\item The cylinder $G$ and M\"obius strip $H$ can be distinguished topologically:
one tool is orientation, an other is the connectedness of the boundary.
Why is an algebraic description via Betti numbers interesting? Because they 
are purely algebraically defined.  
\item Using computer search we have found examples of finite simple graphs for which the
$f$-vector and $f$-matrix are both the same, where the singular cohomologies are the same
and where the Wu characteristic is the same, but where the Betti vector for
quadratic cohomology is different.
\item For complicated networks, the computations of interaction cohomology become
heavy, already in cubic cases we reach limits of computing rather quickly. 
It can be useful therefore to find first a smaller homeomorphic
graph and compute the cohomology there. 
\item Homeomorphic graphs have the same simplicial cohomologies.
This is true also for higher cohomologies but it is less obvious. One 
can see it as follows; after some Barycentric refinements,
every point is now either a point with manifold neighborhood or a singularity, 
where various discrete manifolds with boundary come together. 
For homeomorphic graphs, the structure of these singularities must correspond. 
It follows from Gauss-Bonnet formulas that the 
higher Wu-characteristics are the same. To see it for cohomology, we will 
have to look at Meyer-Vietoris type computations for glueing cohomologie or 
invariance under deformations like edge collapse or refinement. 
\item We feel that the subject of interaction cohomology has a lot of potential also
to distinguish graphs embedded in other graphs. While Wu characteristic for embedded knots in a 3-sphere
is always 1, the cohomology could be interesting. 
\item We have not yet found a known cohomology in the continuum which comes close to 
interaction cohomology. Anything we looked at looks considerably more complicated. 
Interaction cohomology is elementary: one just defines some concrete finite matrices and 
computes their kernel. 
\item For more, see \cite{KnillKuenneth,KnillTopology,DiracKnill,KnillBaltimore,
cauchybinet,knillmckeansinger,brouwergraph,valuation}.
\end{itemize}

\scalebox{0.12}{\includegraphics{figures/moebius.pdf}} 
\scalebox{0.12}{\includegraphics{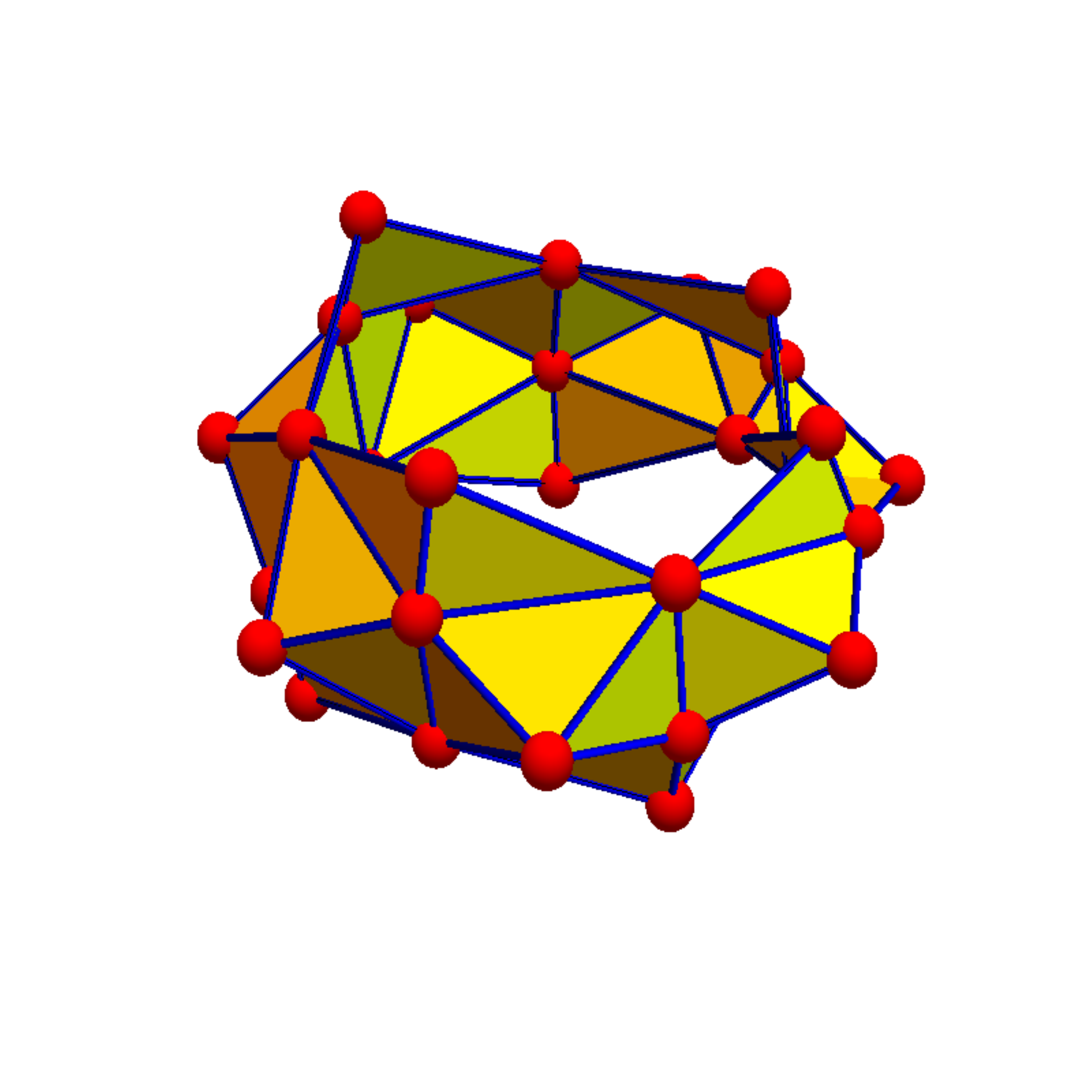}} 
\scalebox{0.12}{\includegraphics{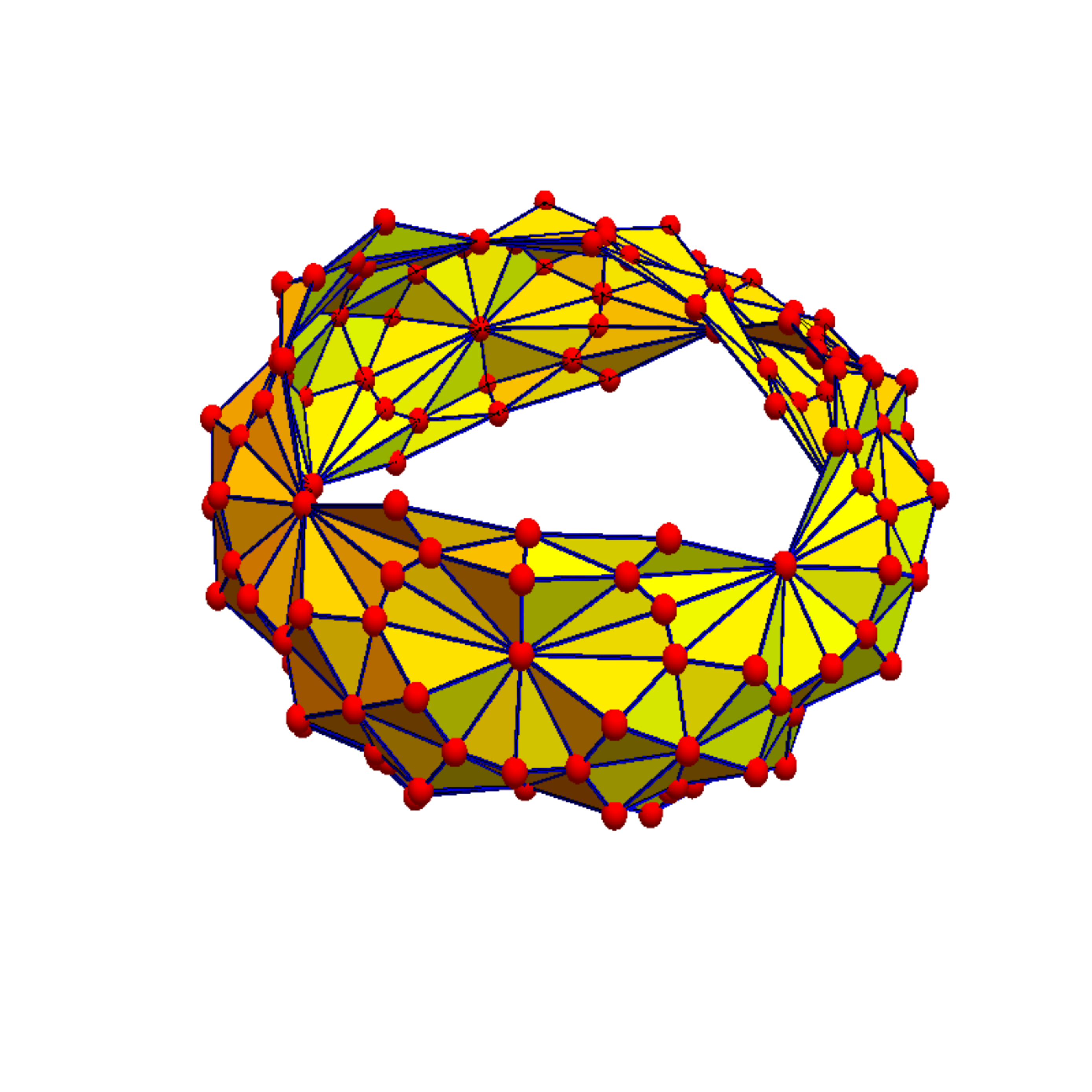}} 
\scalebox{0.12}{\includegraphics{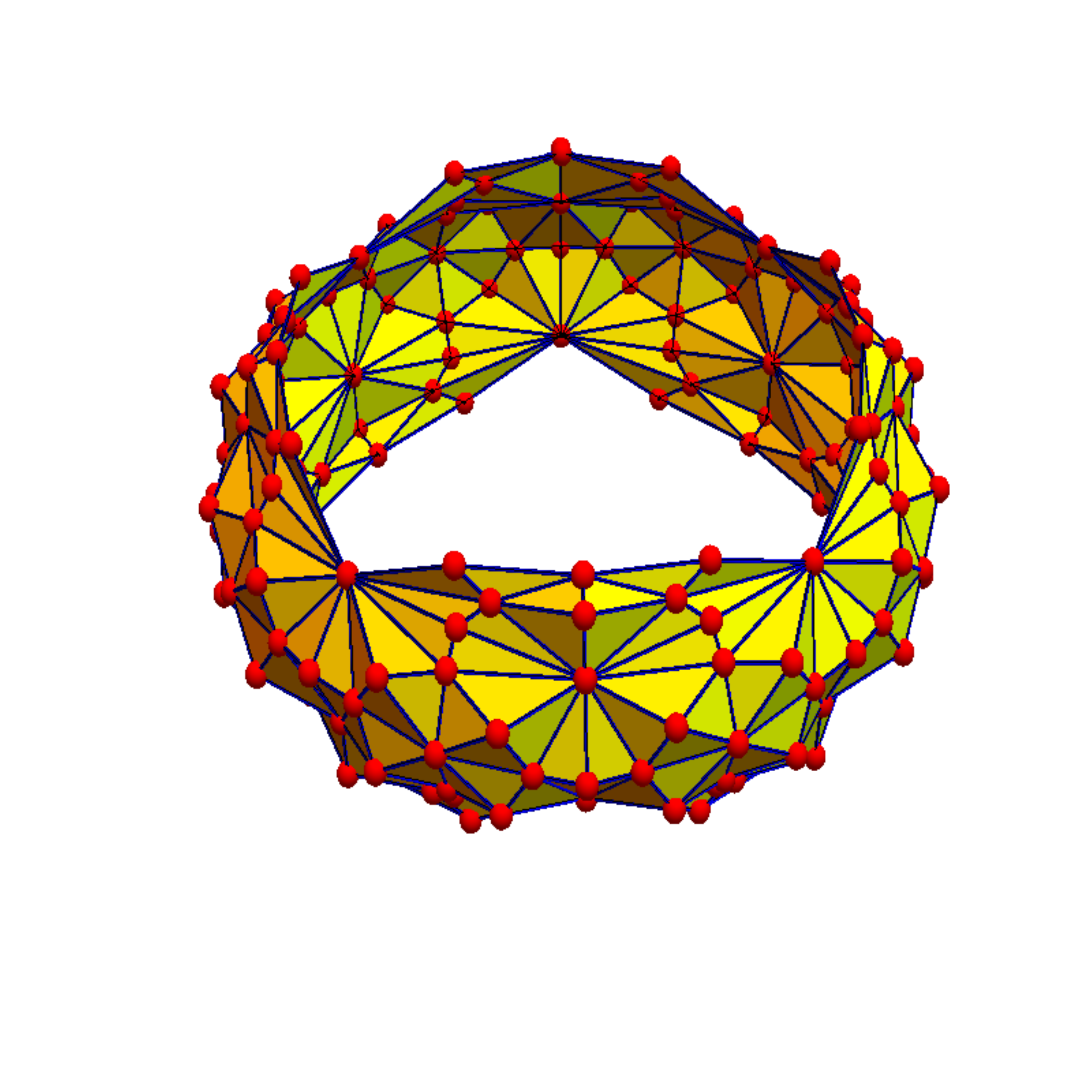}} 
\scalebox{0.12}{\includegraphics{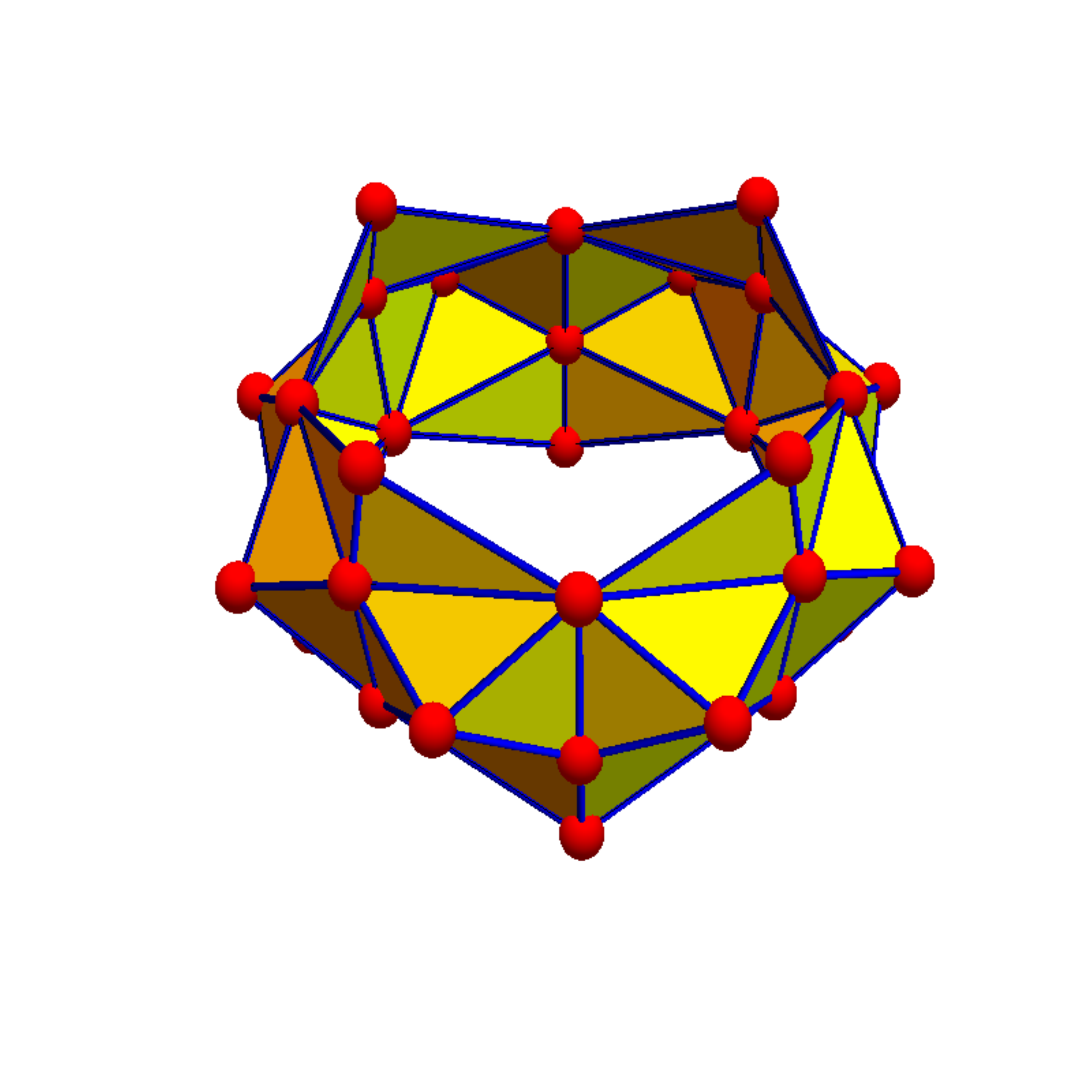}} 
\scalebox{0.12}{\includegraphics{figures/cylinder.pdf}}

\hspace{-1cm}
\scalebox{0.47}{\includegraphics{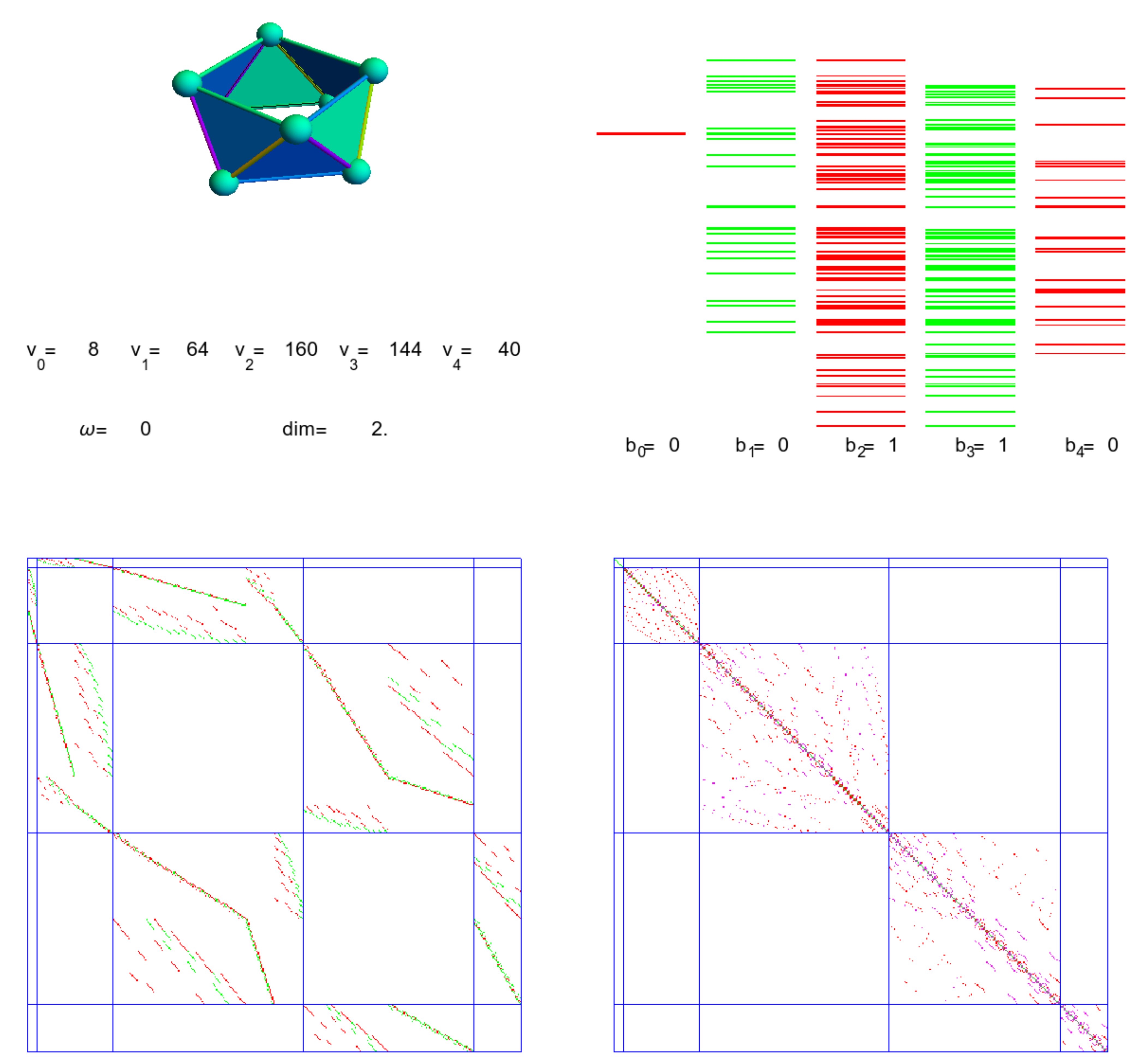}} \\
The figure shows the Dirac and Laplacian of the cylinder graph $G$.
We see the spectrum of the interaction Laplacian on 
each of the $5$ interaction form sectors $p=0, \dots, p=4$. We see also 
the super symmetry: the union of the spectra on even dimensional 
forms $p=0,2,4$ is the same than the union of the spectra on 
odd-dimensional forms $p=1,3$. 

\hspace{-1cm}
\scalebox{0.47}{\includegraphics{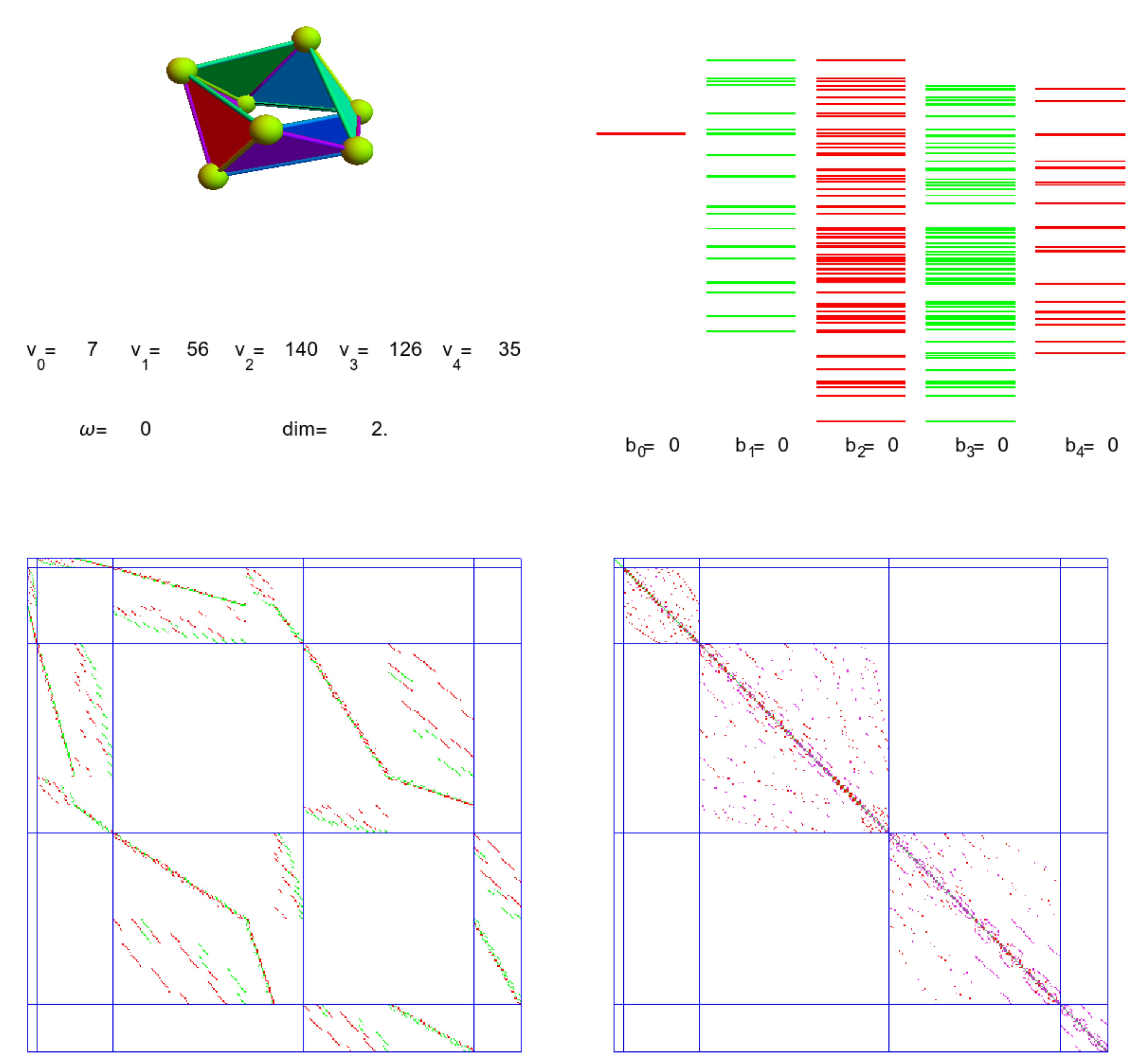}} \\
The figure shows the Dirac and Laplacian of the M\"obius graph $G$.
We see the spectrum of the interaction Laplacian on 
each of the $5$ interaction sectors $p=0, \dots, p=4$. 
There is one important difference however: the Laplacian
is invertible.  \\

\pagebreak


\bibliographystyle{plain}

\end{document}